	\titleformat{\section}[block]{\Large\bfseries\filcenter}{\thesection}{1em}{}
\theoremstyle{plain}
\renewcommand*\thesection{\arabic{section}}
\numberwithin{equation}{section} 
\theoremstyle{plain}
\newtheorem{thm}{Theorem}
\newtheorem{lemma}[thm]{Lemma}
\newtheorem{corollary}[thm]{Corollary}
\newtheorem{theorem}[thm]{Theorem}
\numberwithin{thm}{section}
\let\expandafter\oldproof\csname\string\proof\endcsname
\let\oldendproof\endproof
\renewenvironment{proof}[1][\proofname]{%
  \oldproof[\upshape \bfseries #1]%
}{\oldendproof}
\def\@makechapterhead#1{%
  \vspace*{50\p@}%
  {\parindent \z@ \raggedright \normalfont
    \interlinepenalty\@M
    \Huge\bfseries  \thechapter.\quad #1\par\nobreak
    \vskip 40\p@
  }}
\newcommand{\reqnomode}{\tagsleft@false}
\def\dy{\,{\rm d}y}
\def \d{\,{\rm d}}
\DeclareRobustCommand*{\bfseries}{%
  \not@math@alphabet\bfseries\mathbf
  \fontseries\bfdefault\selectfont
  \boldmath
}
\newlength{\defbaselineskip}
\newcommand{\N}{\mathbb{N}}
\newcommand\eps\varepsilon
\def\mean#1{\mathchoice%
          {\mathop{\kern 0.2em\vrule width 0.6em height 0.69678ex depth -0.58065ex
                  \kern -0.8em \intop}\nolimits_{\kern -0.4em#1}}%
          {\mathop{\kern 0.1em\vrule width 0.5em height 0.69678ex depth -0.60387ex
                  \kern -0.6em \intop}\nolimits_{#1}}%
          {\mathop{\kern 0.1em\vrule width 0.5em height 0.69678ex
              depth -0.60387ex
                  \kern -0.6em \intop}\nolimits_{#1}}%
          {\mathop{\kern 0.1em\vrule width 0.5em height 0.69678ex depth -0.60387ex
                  \kern -0.6em \intop}\nolimits_{#1}}}
\def\eqn#1$$#2$${\begin{equation}\label#1#2\end{equation}}
\newcommand\R{\mathbb{R}}
\newcommand{\Div}{\mathrm{div}\,}
\def \tp{\textup}
\def \p{\partial}
\def \e{\varepsilon}
\def \D{\mathrm{D}}
\def \LL{\mathrm L}
\newcommand\restr[2]{{
  \left.\kern-\nulldelimiterspace 
  #1 
  \vphantom{|} 
  \right|_{#2} 
  }}
\title{Geometric linearisation for optimal transport with strongly $p$-convex cost}
\author{Lukas Koch}
\affil[1]{\small Max Planck Institute for Mathematics in the Sciences, 04103 Leipzig,Germany
\protect \\
  {\tt{lkoch@mis.mpg.de}}
  \vspace{1em} \ }
\begin{document}

\maketitle
\begin{abstract}
We prove a geometric linearisation result for minimisers of optimal transport problems where the cost-function is strongly $p$-convex and of $p$-growth. Initial and target measures are allowed to be rough, but are assumed to be close to Lebesgue measure.
\end{abstract}

\section{Introduction}\label{sec:intro}
The study of the optimal transport problem:
\begin{align}\label{eq:problem}
\min_{\pi\in \Pi(\lambda,\mu)}\int_{\R^d\times \R^d} c(x-y)\d\pi
\end{align}
is well established. Here $\lambda,\mu$ are two (finite) non-negative measures on $\R^d$ satisfying $\lambda(\R^d)=\mu(\R^d)$. We refer the reader to \cite{Villani2009} and \cite{Santambrogio2015} for an introduction and overview of the literature. When solutions take the form of a transport map $\pi = (\tp{Id}\times T)_\#\mu$, under mild assumptions, minimisers are characterised by satisfying the Euler-Lagrange equation
\begin{align}\label{eq:EL}
\mu(T(x))\tp{det}\, (\D T(x)) = \lambda(x)
\end{align}
as well as the additional structure condition
\begin{align}
T(x)=x+ \nabla c^\ast(\D\phi),
\end{align}
where $\phi$ is a $c$-concave function and $c^\ast$ denotes the convex conjugate of $c$. Assuming $\mu\sim\lambda\sim 1$ and linearising the geometric nonlinearity in \eqref{eq:EL}, that is formally expanding $\tp{det}(\tp{Id}+A)=1+\tp{tr}\, A+\ldots$, we find that
\begin{align}\label{eq:linearised}
\tp{div}\, \nabla c^\ast(\D\phi)=\lambda-\mu.
\end{align}
Thus, at least formally, we expect solutions of \eqref{eq:problem} to be well approximated by solutions of \eqref{eq:linearised}. Note that in general \eqref{eq:linearised} is a nonlinear equation. Thus we refer to the process of moving from \eqref{eq:problem} to \eqref{eq:linearised} as geometric linearisation. The aim of this paper is to make this connection rigorous.
We show the following:
\begin{theorem}\label{thm:main}Let $1<p<\infty$.
Suppose $c\colon \R^d\to \R$ is a strongly $p$-convex cost function of controlled-duality $p$-growth, that is satisfying \eqref{ass:elliptic}-\eqref{eq:controlledGrowth}. Let $\pi$ be a minimiser of \eqref{eq:problem} for some non-negative measures $\lambda$, $\mu$ satisfying $\lambda(\R^d)=\mu(\R^d)$. Denote
\begin{gather*}
E(R) := \frac 1 {\lvert B_{R}R^p\rvert}\int_{(B_{R}\times \R^d)\cup (\R^d\times B_{R})}c(x-y)\d\pi \\
D(R) := \frac 1 {\lvert B_{R}\rvert} W_{p}^p(\lambda\llcorner B_R,\kappa_{\lambda,R}\d x\llcorner B_R)+\frac{R^p}{\kappa_{\lambda,R}^{p-1}}(\kappa_{\lambda,R}-1)^p\\
\qquad\qquad+\frac 1 {\lvert B_{R}\rvert} W_{p}^p(\mu\llcorner B_R,\kappa_{\mu,R}\d x \llcorner B_R)+\frac{R^p}{\kappa_{\mu,R}^{p-1}}(\kappa_{\mu,R}-1)^p.
\end{gather*}
Here $\kappa_{\lambda,R} = \frac{\lambda(B_R)}{\lvert B_R\rvert}$ and $\kappa_{\mu,R} = \frac{\mu(B_R)}{\lvert B_R\rvert}$.
Then, for every $\tau>0$, there exists $\e(\tau)>0$ such that if ${E(4)+D(4)\leq \e}$\footnote{For applications it may be useful to note that the scale $4$ is not important and it is straightforward to carry out the proofs under the assumption that $E(r)+D(r)\leq \e$ for some $r>3$. In that case $\e$ will depend on $r-3$.}, then there exists a radius $R\in(2,3)$, $c\in \R$ and $\phi$ satisfying
\begin{align*}
-\Div \nabla c^\ast(\D\phi) = c \text{ in } B_R 
\end{align*}
such that
\begin{align}\label{eq:mainInequality}
\int_{(B_1\times \R^d)\cup (\R^d\times B_1)}c(x-y-\nabla c^\ast(\D\phi))\d\pi\lesssim \tau E(4)+ D(4).
\end{align}
Moreover
\begin{align*}
\sup_{B_1}|\D\phi|^{p'}+\int_{B_R}\lvert\D\phi\rvert^{p'}\d x\lesssim E(4)+D(4).
\end{align*}
\end{theorem}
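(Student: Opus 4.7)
The plan is to adapt the harmonic-approximation / excess-decay scheme of Goldman--Otto (for quadratic cost) to the strongly $p$-convex setting. The three main steps are: (i) select a good intermediate radius $R\in(2,3)$ at which boundary contributions can be controlled; (ii) construct $\phi$ as the minimiser of a convex auxiliary problem whose Euler--Lagrange equation is the linearised PDE; and (iii) compare the primal transport cost with the ``shifted'' cost through a quantitative convexity inequality coming from strong $p$-convexity.

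For (i) and (ii), a Fubini / pigeonhole argument on $R\in(2,3)$ applied to $E(\cdot)$ and to the boundary contributions of $\pi$, $\lambda$ and $\mu$ selects a radius at which the transport cost concentrated near $\partial B_R$ is $\lesssim E(4)+D(4)$, at which a meaningful normal flux $g$ of $\pi$ across $\partial B_R$ exists with $\int_{\partial B_R}|g|^{p'}\,d\mathcal H^{d-1}\lesssim E(4)+D(4)$, and at which $|\lambda(B_R)-\mu(B_R)|^{p'}|B_R|^{1-p'}\lesssim D(4)$. Setting the constant to $c:=(\lambda(B_R)-\mu(B_R))/|B_R|$ secures the natural Neumann compatibility, and turns
\[
\min_\phi \int_{B_R}c^*(\D\phi)\,dx - c\int_{B_R}\phi\,dx - \int_{\partial B_R}g\phi\,d\mathcal H^{d-1}
\]
into a strictly convex, well-posed problem (by the $p'$-duality growth and strict convexity of $c^*$). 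Its unique minimiser (up to additive constants) satisfies $-\Div\nabla c^*(\D\phi)=c$ in $B_R$ with Neumann datum $g$; testing the problem against $\phi$ itself gives $\int_{B_R}|\D\phi|^{p'}\,dx\lesssim E(4)+D(4)$, and the sup bound in $B_1$ follows from interior Lipschitz regularity for the quasilinear equation, a perturbation of the $p'$-Laplacian whose regularity under the hypotheses on $c$ is classical.

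The heart of the argument is (iii). Strong $p$-convexity of $c$ delivers a quasi-orthogonality estimate of the schematic form
\[
c(a-b) \leq c(a) - \nabla c(a)\cdot b + C\bigl(|b|^p + |b|^2(|a|+|b|)^{p-2}\bigr).
\]
Applied pointwise $\pi$-a.e.\ with $a=x-y$, $b=\nabla c^*(\D\phi(x))$ and integrated against $\pi$, the nonlinear error is absorbed into $\tau E(4)+D(4)$ by Young's inequality and the $L^{p'}$-bound on $\D\phi$, provided $\e(\tau)$ is chosen small enough. The linear cross term $-\int\nabla c(x-y)\cdot\nabla c^*(\D\phi(x))\,d\pi$ is recast in Eulerian form: a Dacorogna--Moser / Benamou--Brenier construction produces an auxiliary smooth flux $j$ on $B_R$ with $\Div j=\lambda-\mu-c$ and $\int_{B_R}c^*(j)\,dx\lesssim D(4)$ (absorbing the $W_p$-defect part of $D$), and integrating by parts against the PDE for $\phi$ rewrites this cross term in a form matching the leading order of $\int c(x-y)\,d\pi$, leaving only the claimed error $\tau E(4)+D(4)$.

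The principal obstacle is this final Lagrangian-to-Eulerian passage: when $\pi$ is neither a graph nor smooth and the cost is non-quadratic, $\pi$ must be mollified at a scale tuned to control simultaneously the quasi-orthogonality remainder, the $W_p$-defects in $D(4)$, and the $\tau$-fraction of $E(4)$ one is allowed to spend; this is where the quantitative dependence $\e=\e(\tau)$ is crystallised. A secondary difficulty is the trace of $\pi$ on $\partial B_R$: one must ensure that the chosen radius yields not merely integrable but $L^{p'}$-integrable normal flux so that $g$ is an admissible Neumann datum with the right quantitative bounds.
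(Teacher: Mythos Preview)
Your overall architecture---choose a good radius, solve a Neumann problem for $\phi$, then use a convexity inequality to compare---matches the paper. The gap is in step (iii): the quasi-orthogonality you wrote down expands $c$ at the wrong base point, and this is fatal for $p\neq 2$.

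You expand $c(a-b)$ around $a=x-y$, producing the cross term $\int \nabla c(x-y)\cdot \nabla c^\ast(\D\phi(x))\,\d\pi$. For this to be usable you would need to rewrite it via the PDE, but the PDE for $\phi$ is tested against gradients, i.e.\ it controls expressions of the form $\int \nabla c^\ast(\D\phi)\cdot \nabla\psi$. Your cross term has $\nabla c(x-y)$, which is neither a gradient in $x$ nor the velocity $\dot X$ of a straight trajectory; the fundamental-theorem-of-calculus identity $\int_\sigma^\tau \langle \dot X,\nabla\psi(X)\rangle\,\d t=\psi(X(\tau))-\psi(X(\sigma))$ simply does not apply to it unless $\nabla c(\dot X)=\dot X$, i.e.\ unless $c$ is quadratic. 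The Dacorogna--Moser flux you invoke cannot repair this: a vector field $j$ with $\Div j=\lambda-\mu-c$ integrates by parts against $\D\phi$, not against $\nabla c^\ast(\D\phi)$, so it never touches $\int \nabla c(x-y)\cdot\nabla c^\ast(\D\phi)\,\d\pi$. Moreover, your residual term $\int|b|^p\,\d\pi\sim\int|\D\phi|^{p'}\,\d x\lesssim E+D$ is of full size, not $\tau E+D$; in the quadratic case the identity $c(a-b)=c(a)-a\cdot b+c(b)$ makes this term cancel exactly against the cross term, but for $p\neq 2$ there is no such cancellation.

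The paper's fix is to expand at the other point: apply strong $p$-convexity as $c(a)\ge c(b)+\langle\nabla c(b),a-b\rangle+\lambda V_p(a,b)$ with $a=\dot X(t)$ and $b=\nabla c^\ast(\D\phi(X(t)))$. Since $\nabla c(\nabla c^\ast(\D\phi))=\D\phi$, the cross term becomes $\int_\Omega\int_\sigma^\tau\langle \D\phi(X(t)),\dot X(t)-\nabla c^\ast(\D\phi(X(t)))\rangle\,\d t\,\d\pi$; the $\langle\D\phi,\dot X\rangle$ piece is an exact $t$-derivative along trajectories, and the $\langle\D\phi,\nabla c^\ast(\D\phi)\rangle$ piece is precisely the weak form of the equation tested with $\phi$. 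What remains is not an additive error but the \emph{energy gap} $\int_\Omega c(x-y)\,\d\pi-\int_{B_R}c(\nabla c^\ast(\D\phi))\,\d x$, which is estimated by building a Benamou--Brenier competitor for a localised transport problem, together with a Lagrangian--Eulerian discrepancy $\int_{B_R}c(\nabla c^\ast(\D\phi))\,\d x-\int_\Omega\int_\sigma^\tau c(\nabla c^\ast(\D\phi(X(t))))\,\d t\,\d\pi$. Both of these, as well as the replacement of $\D\phi(X(t))$ by $\D\phi(X(0))$, require a pointwise bound $|x-y|\lesssim (E+D)^{1/(p+d)}$ on $\mathrm{spt}\,\pi$, proved directly from $c$-cyclical monotonicity and strong $p$-convexity; this $L^\infty$ bound is used throughout and is absent from your outline. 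Finally, the paper mollifies the boundary data $\overline f_R,\overline g_R$ (to get $\phi^r\in C^1(\overline B_R)$), not $\pi$.
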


We remark that we explain our assumptions on the cost function in detail in Section \ref{sec:costFunction}. Theorem \ref{thm:main} states that if at some scale the local transportation cost $E(R)$ (and the data term) are small, then at a smaller scale, the transportation plan is well-approximated by $\nabla c^\ast(\D\phi)$, in the sense that estimate \eqref{eq:mainInequality} holds. Note in particular that as a consequence of \eqref{eq:mainInequality} and \eqref{ass:Cgrowth}, also
\begin{align*}
\int_{(B_1\times \R^d)\cup (\R^d\times B_1)}\lvert x-y-\nabla c^\ast(\D\phi)\rvert^p d\pi\lesssim \tau E(4)+D(4).
\end{align*}
Thus Theorem \ref{thm:main} makes the intuition leading to \eqref{eq:linearised} rigorous.

Traditionally, \eqref{eq:problem} has been approached via the study of \eqref{eq:EL} using the theory of fully nonlinear elliptic equations developed by \textsc{Caffarelli}, see e.g. \cite{Figalli2010,DePhilippis2015} and the references therein. Recently, an alternative approach using variational techniques has been developed by \textsc{Goldman and Otto} in \cite{Goldman2018}. There, partial $C^{1,\alpha}$-regularity for solutions to \eqref{eq:problem} in the case of H\"older-regular densities $\lambda$, $\mu$ and quadratic cost function $c(x-y)=\frac 1 2 |x-y|^2$ was proven. The key tool in the proof was a version of Theorem \ref{thm:main} in this setting. In later papers, continuous densities \cite{Goldman2020a}, rougher measures \cite{Goldman2021}, more general cost functions (albeit still close to the quadratic cost functional) \cite{Otto2021}, as well as almost-minimisers of the quadratic cost functional \cite{Otto2021} were considered. The quadratic version of Theorem \ref{thm:main} was also used to provide a more refined linearisation result of \eqref{eq:EL} in the quadratic set-up in \cite{Goldman2021} and of a similar statement in the context of optimal matching in \cite{Goldman2022}. Finally, quadratic versions of Theorem \ref{thm:main} played a key role in disproving the existence of a stationary cyclically monotone Poisson matching in $2$-d \cite{Huesmann2021}. 

We remark that very little information is available about the regularity of minimiser of \eqref{eq:problem} already in the simplest degenerate/singular case $c(x-y)=\frac{|x-y|^p}{p}$. In order to attempt to extend the techniques of \cite{Goldman2018} to this setting, an essential first step is Theorem \ref{thm:main}. This result will also play a key role in extending the results of \cite{Huesmann2021} to $p$-costs with $p\neq 2$ \cite{Huesmann2023}.

The strategy of proof is similar to that used in \cite{Goldman2021}, although with a number of simplifications. Further, we point the reader to \cite{Koch2023} where a detailed account of the proof of Theorem \ref{thm:main} and the motivations behind the strategy are given in the quadratic case. The heart of the proof is contained in Section \ref{sec:proof}. The key insight Lemma \ref{lem:orthog} is a consequence of the strong $p$-convexity, which allows to estimate (up to error terms) the left-hand side of the main estimate in Theorem \ref{thm:main} by a sum of two terms: on the one hand the difference between the local transportation energy of $\pi$ and a dual form of the energy of $\phi$:
\begin{align}\label{eq:diffEnergies}
\int_{(B_R\times \R^d)\cup (\R^d\times B_R)} c(x-y)\d\pi-\int_{B_R} c(\nabla c^\ast(\D\phi))\d x,
\end{align}
and on the other hand
\begin{align}\label{eq:PDEterm}
\int_{\{\exists t\colon X(t)\in B_R\}}\int_{\tau}^{\sigma}\langle y-x-\nabla c^\ast(\D\phi(X(t)),\D\phi(X(t))\rangle\d t \d\pi.
\end{align}
Here we write $X(t)=(1-t)x+ty$ and set ${\tau = \inf\{t\in[0,1]\colon X(t)\in B_R\}}$, as well as $\sigma = \sup\{t\in[0,1]\colon X(t)\in B_R\}$.
Lemma \ref{lem:orthog} replaces the quasi-orthogonality property, which was employed in the quadratic case. The quasi-orthogonality property relied on expanding the squares, which is not an available tool if $p\neq 2$. 

From formal calculations, which we make rigorous in Lemma \ref{lem:secondTerm}, \eqref{eq:PDEterm} will be small, if $\phi$ solves the Neumann problem
\begin{align}\label{eq:formal}
-\Div \nabla c^\ast(\D\phi)=\mu-\lambda\quad& \text{ in } B_R\\
\nabla c^\ast(\D\phi)\cdot\nu = g_R-f_R &\text{ on } \p B_R,
\end{align}
where $f_R,g_R$ are functions tracking the location of $X(\tau)$ and $X(\sigma)$, respectively. We formally define $f_R,g_R$ in \eqref{eq:2}. However, as written, the problem is not well-posed and solutions do not possess sufficient regularity to carry out the necessary estimates. Hence, we will actually work with approximations of $f_R$ and $g_R$, which we construct in Section \ref{sec:boundaryData}. Controlling the error made in this approximation, will require us to enlarge the domain on which we work and to choose a suitable radius $R\in[2,3]$. This explains the presence of $R$ in the formulation of Theorem \ref{thm:main}. The estimate of \eqref{eq:PDEterm} is finally carried out in Lemma \ref{lem:secondTerm} in Section \ref{sec:proof}.

The idea in estimating \eqref{eq:diffEnergies} is to relate the first term with the value of a localised optimal transportation problem. Then an appropriate competitor can be constructed using $\phi$ in order to estimate \eqref{eq:diffEnergies}. We carry out the first step in Section \ref{sec:localisation}, while the second is obtained in Lemma \ref{lem:firstTerm} in Section \ref{sec:proof}.

In order to carry out the estimates, both of \eqref{eq:diffEnergies} and \eqref{eq:PDEterm}, two ingredients are essential. We require elliptic regularity estimates which follow from strict $p^\prime$-convexity of $c^\ast$. We explain how to obtain these in Section \ref{sec:regularity}. In the quadratic case, the relevant equation is Laplace equation. Hence solutions are harmonic and hence very regular- the proof in \cite{Goldman2021} requires $C^3$-regularity of solutions! Already in the case $c(x-y)=\frac{|x-y|^p}{p}$ with $p\neq 2$, the best regularity that is known for solutions to \eqref{eq:linearised} in general is $C^{1,\beta}$-regularity for some $\beta>0$ \cite{Uraltseva1968,Lewis1983,Tolksdorf1983}. Thus, at various places in the proof, more careful estimates are needed. 

The second ingredient is to obtain a $L^\infty$-bound for minimisers of \eqref{eq:problem} in the small-energy regime, see Section \ref{sec:Linfty}. In the quadratic case, this relies on the monotonicity (in the classical sense) property of solutions. In the non-quadratic case, $c$-monotonicity needs to be used directly. Focusing on $p$-homogeneous convex cost functions with $p>1$, $L^\infty$-bounds were obtained in \cite{Gutierrez2021}. Note that \cite{Gutierrez2021} obtained $L^\infty$-bounds in all energy regimes whereas the $L^\infty$-bounds obtained in this paper only cover the small-energy regime. A further difference is that in this paper, we obtain the $L^\infty$-bound as a consequence of the strong $p$-convexity of the energy, whereas \cite{Gutierrez2021} relies on the homogeneity of the cost. Nevertheless, in the small-energy regime and for cost functions covered by the assumptions both of this paper and of \cite{Gutierrez2021}, the obtained bounds agree. In particular, this is the case for the important example of $p$-cost $c(x-y)=\frac 1 p\lvert x-y\rvert^p$ with $p>1$. 
 
Finally, we comment why we restrict our attention to cost functions of the form $c(x-y)$. This is due to the fact that our proof relies on the availability of a dynamical formulation. As \eqref{eq:PDEterm} hints, we want to identify points $(x,y)\in \mathrm{spt}\,\pi$ with the trajectory ${X(t)=tx+(1-t)y}$. This is related to the Benamou-Brenier formulation of optimal transport, c.f. \cite{Gangbo1995}, which in our case states that \eqref{eq:problem} can be alternatively characterised as
\begin{align}\label{eq:eulerianFormulation}
\min_{(j,\rho)}\left\{\int c\left(\frac{\d j_t}{\d \rho_t}\right)\d\rho\colon \partial_t \rho_t+\Div j_t = 0,\, \rho_0=\lambda,\, \rho_1=\mu\right\}
\end{align}
Here $\frac{\d j_t}{\d \rho_t}$ denotes the Radon-Nikodym derivative. We refer the reader to \eqref{eq:BenamouBrenierDefinition} for an explanation on how to make sense of \eqref{eq:eulerianFormulation} rigorously. This alternative, dynamical formulation of optimal transport is only available for costs of the form $c(x-y)$ where $c$ is convex.

The outline of the paper is as follows: In the remainder of this section, we make precise our assumptions on the cost function (Section \ref{sec:costFunction}) and collect the elliptic regularity statements we require (Section \ref{sec:regularity}). After collecting notation and some elementary results on optimal transportation in Section \ref{sec:prelim}, we collect the statements we require in order to prove Theorem \ref{thm:main} in Section \ref{sec:proof}: in Section \ref{sec:Linfty}, we prove the $L^\infty$-estimate. In Section \ref{sec:localisation}, we prove a localisation result on optimal transportation costs. Next, we approximate the boundary data of \eqref{eq:formal} in Section \ref{sec:boundaryData}. For technical reasons, we also need to localise the data-term $D$, which we do in Section \ref{sec:data}.

\subsection{Assumptions on the cost function and its dual}\label{sec:costFunction}
In this section, we explain our assumptions on the cost function $c$. In order to keep the statements of our results short, the conditions \eqref{ass:elliptic}--\eqref{eq:controlledGrowth} below will be assumed to hold throughout the entire paper. We consider cost-functions modeled on the $p$-cost, $c(x)=\lvert x\rvert^p$ with $p>1$. This is also the primary example of cost functions we have in mind. We emphasize however that the cost functions we consider need not be homogeneous. In fact the assumptions we impose are standard within elliptic regularity theory, see e.g. \cite{Giusti2003,Mingione2006}. Let $p\in(1,\infty)$. We consider a $C^1$-cost function $c\colon \R^d\to \R$ satisfying the following properties:
There is $\Lambda\geq 1$ such that
\begin{enumerate} 
\item $c$ is strongly $p$-convex: for any $x,y\in \R^d$ and $\tau\in[0,1]$,
\begin{align}\label{ass:elliptic}
\Lambda^{-1} \tau(1-\tau) V_p(x,y)+c(\tau x+(1-\tau)y)\leq \tau c(x)+(1-\tau)c(y).
\end{align}
where 
$$
V_p(x,y)=(|x|^2+|y|^2)^\frac{p-2} 2|x-y|^2.
$$
\item $c$ has $p$-growth: for any $x\in \R^d$, 
\begin{align}\label{ass:growth}
\Lambda^{-1} |x|^p\leq c(x)\leq \Lambda |x|^p.
\end{align}
\item for any $x,y\in \R^d$,
\begin{align}\label{ass:Cgrowth}
|c(x)-c(y)|\leq \Lambda U_p(x,y)
\end{align}
where
$$
U_p(x,y)= (|x|+|y|)^{p-1}|x-y|.
$$
\item $c$ satisfies controlled $p$-growth: For any $x,y\in \R^d$,
\begin{align}\label{eq:controlledGrowth}
|\nabla c(x)-\nabla c(y)|\leq \Lambda (\lvert x\rvert+\lvert y\rvert)^{p-2}\lvert x-y\rvert.
\end{align}
\end{enumerate}

If the choice of $p$ is clear from the context, we will write $V=V_p$ and $U=U_p$. We further note the following inequality, valid for any $z_1,z_2,z_3\in \R^d$ and with implicit constants depending only on $p$ and $d$,
\begin{align}\label{eq:VDiff}
\lvert V_p(z_1,z_2)-V_p(z_1,z_3)\rvert \lesssim (\lvert z_1\rvert + \lvert z_2\rvert+\lvert z_3\rvert)^{p-1}\lvert z_2-z_3\rvert.
\end{align}
\eqref{eq:VDiff} follows from writing
\begin{align*}
V_p(z_1,z_2)-V_p(z_1,z_3) = \int_0^1 \langle \nabla_2 V_p(z_1,s z_2+(1-s)z_3), z_2-z_3\rangle \d t.
\end{align*}
Here $\nabla_2$ denotes a derivative with respect to the second variable of $V_p$. From elementary calculations
\begin{align*}
\lvert \nabla_2 V_p(z_1+s z_2+(1-s)z_3)\rvert \lesssim (\lvert z_1\rvert+\lvert z_2\rvert+\lvert z_3\rvert)^{p-1},
\end{align*}
which gives \eqref{eq:VDiff}.

\eqref{ass:elliptic} and the fact that $c$ is $C^1$ imply that for any $x,y\in \R^d$,
\begin{align}\label{ass:smoothness}
c(x)\geq c(y)+\langle \nabla c(y),x-y\rangle +\lambda V(x,y),\\
\langle \nabla c(x)-\nabla c(y),x-y\rangle \geq \lambda V(x,y).
\end{align}
This can be seen by arguing as in the $2$-convex, $2$-growth case, which can be found for example in \cite[Chapter IV.4.1]{HiriartUrruty1993b}.

\noindent We require some information on the convexity properties of the convex conjugate that follow by adaptions of the $2$-convex, $2$-growth theory in \cite{Rockafellar1970} and \cite{HiriartUrruty1993a}. For the convenience of the reader, we provide proofs of the statements we require. Introduce the convex conjugate $c^\ast$ defined on $\R^d$ via
$$
c^\ast(\xi)= \sup_{x} \langle \xi,x\rangle - c(x).
$$
We remark that since $c$ is strongly convex, $C^1$ and superlinear at infinity, $c^\ast$ is strongly convex, $C^1$ and superlinear at infinity. Moreover $\nabla c$ and $\nabla c^\ast$ are homeomorphisms of $\R^d$ and $\nabla c^\ast = (\nabla c)^{-1}$ \cite[Theorem 26.5]{Rockafellar1970}.
Note that due to \eqref{ass:growth}, we have for any $\xi\in \R^d$,
\begin{align*}
c^\ast(\xi)\leq \sup_x \langle \xi,x\rangle-\Lambda^{-1} \lvert x\rvert^p \lesssim \lvert \xi\rvert^{p^\prime}.
\end{align*}
A lower bound can be obtained similarly and we deduce:
\begin{align}\label{ass:growthDual}
|\xi|^{p'}\lesssim c^\ast(\xi)\lesssim |\xi|^{p'}.
\end{align}
Due to strict $p$-convexity of $c$, $c^\ast$ satisfies for any $\xi_1,\xi_2\in \R^d$,
\begin{align}\label{eq:c1growthDual}
|\nabla c^\ast(\xi_1)-\nabla c^\ast(\xi_2)|\lesssim(|\xi_1|+|\xi_2|)^{p'-2}|\xi_1-\xi_2|.
\end{align}
Indeed, for $\xi_1,\xi_2\in \R^d$, using \eqref{ass:smoothness} with the choice ${x=\nabla c^\ast(\xi_1)}$, $y=\nabla c^\ast(\xi_2)$ and Cauchy-Schwarz,
\begin{align*}
V_p(\nabla c^\ast(\xi_1),\nabla c^\ast(\xi_2))\lesssim& \langle \xi_1-\xi_2,\nabla c^\ast(\xi_1)-\nabla c^\ast(\xi_2)\rangle
\leq& \lvert \nabla c^\ast(\xi_1)-\nabla c^\ast(\xi_2)\rvert \lvert \xi_1-\xi_2\rvert.
\end{align*}
Re-arranging, we have
\begin{align}\label{eq:c1growthDual2}
\lvert \nabla c^\ast(\xi_1)-\nabla c^\ast(\xi_2)\rvert\lesssim \lvert \xi_1-\xi_2\rvert (\lvert \nabla c^\ast(\xi_1)\rvert^2+\lvert\nabla c^\ast(\xi_2)\rvert^2)^{\frac{2-p} 2}.
\end{align}
We claim that  for any $\xi\in \R^d$,
\begin{align}\label{eq:sizeDualGradient}
\lvert \xi\rvert^{p^\prime-1}\lesssim \lvert \nabla c^\ast(\xi)\rvert\lesssim \lvert \xi\rvert^{p^\prime-1}.
\end{align}
Then \eqref{eq:c1growthDual2} gives \eqref{eq:c1growthDual}. Since $\nabla c^\ast = (\nabla c)^{-1}$ and both maps are homeomorphisms, to show \eqref{eq:sizeDualGradient}, it suffices to show that for any $x\in \R^d$,
\begin{align}\label{eq:c1growthDual4}
\lvert x\rvert^{p-1}\lesssim \lvert \nabla c(x)\rvert\lesssim \lvert x\rvert^{p-1}.
\end{align}
Fix $x\in \R^d$. Since difference quotients of convex functions are non-decreasing, for any $h\in \R^d$, applying also \eqref{ass:Cgrowth},
\begin{align*}
\langle \nabla c(x),h\rangle \leq \frac{f(x+h)-f(x)}{\lvert h\rvert}\lesssim \frac{(\lvert x\rvert+\lvert h\rvert)^{p}}{\lvert h\rvert}.
\end{align*}
Applying the above with $h\to th$ and choosing $t$ such that $\lvert t h\rvert = \lvert x\rvert$, as $h$ was arbitrary, this gives the second inequality in \eqref{eq:c1growthDual4}. Note that in particular $\nabla c(0)=0$. Thus, using \eqref{ass:elliptic} with $y=0$ and Cauchy-Schwartz gives
\begin{align*}
\lvert x\rvert^p\lesssim \langle \nabla c(x),x\rangle\leq \lvert \nabla c(x)\rvert \lvert x\rvert.
\end{align*}
After rearranging, this proves the first inequality in \eqref{eq:c1growthDual4}. Hence \eqref{eq:c1growthDual} is established.

Finally, it follows from \eqref{eq:c1growthDual} and the intermediate value theorem that for $\xi_1,\xi_2\in \R^d$,
\begin{align}\label{eq:CgrowthDual}
\lvert c^\ast(\xi_1)-c^\ast(\xi_2)\rvert= \Big\lvert\int_0^1 \langle \nabla c^\ast(s\xi_1+(1-s)\xi_2),\xi_1-\xi_2\rangle\d s\Big\rvert \lesssim U_{p'}(\xi_1,\xi_2).
\end{align}

We also require that $c^\ast$ is $p^\prime$ convex, that is for some $C(p,\Lambda)>0$,
\begin{align}\label{eq:p'convex}
c \tau(1-\tau) V_{p'}(\xi_1,\xi_2)+c^\ast(\tau \xi_1+(1-\tau)\xi)2)\leq \tau c^\ast(\xi_1)+(1-\tau)c^\ast(\xi_2).
\end{align}
Indeed, we can use Taylor's theorem and \eqref{eq:controlledGrowth} to obtain for any $x,y\in \R^d$ and some $C=C(p,n)>0$,
\begin{align}\label{eq:p'convex2}
c(y)=& c(x)+\langle \nabla c(x),y-x\rangle + \int_0^1 \langle \nabla c(x+t(y-x))-\nabla c(x),y-x\rangle \mathrm{d}t\nonumber\\
\leq& c(x)+\langle \nabla c(x),y-x\rangle + C (\lvert x\rvert+\lvert x-y\rvert)^{p-2}\lvert y-x\rvert^2.
\end{align}
In order to estimate the integral, we used a well-known estimate, see e.g. \cite{Hamburger1992,Giaquinta1986}. Recall the Fenchel-Young inequality in the form
\begin{align}\label{eq:FenchelYoung}
c(\xi) +c^\ast(x)\geq \langle \xi,x\rangle \quad \forall \xi, x\in \R^d,
\end{align}
with equality if and only if $\xi = \nabla c^\ast(x)$. Hence, with the choice $x=\nabla c^\ast(\xi_1)$, \eqref{eq:p'convex2} gives
\begin{align}\label{eq:p'convex3}
-c^\ast(\xi_2)=& -\sup_y \{ \langle \xi_2,y\rangle-c(y)\}\nonumber\\
\leq& - c^\ast(\xi_1)-\sup_{y}\left\{ \langle \xi_2-\xi_1,y\rangle -C(\lvert \nabla c^\ast(\xi_1)\rvert+\lvert \nabla c^\ast(\xi_1)-y\rvert)^{p-2} \lvert y-\nabla c^\ast(\xi_1)\right\}.
\end{align}
Note that the supremum is nothing but $\left(C V_p(\nabla c^\ast(\xi_1),\cdot-\nabla c^\ast(\xi_1)\right)^\ast(\xi_2-\xi_1)$. Noting that for $x,y\in \R^d$,
\begin{align*}
(\lvert x\rvert+\lvert y\rvert)^{p-2}\lvert y\rvert^2 \lesssim \begin{cases}
					\lvert y\rvert^p \quad &\text{ if } \{\lvert x\rvert \leq \lvert y\rvert, p\geq 2\} \text{ or } \{\lvert x\rvert \geq \lvert y\rvert, p\leq 2\}\\
					\lvert x\rvert^{p-2} \lvert x\rvert^2 \quad & \text{ if } \{\lvert x\rvert \geq \lvert y\rvert, p\geq 2\} \text{ or } \{\lvert x\rvert \leq \lvert y\rvert, p\leq 2\}
				  \end{cases}
\end{align*}
and arguing case by case as for \eqref{ass:growthDual}, this shows
\begin{align*}
\left((\lvert x\rvert+\lvert \cdot\rvert)^{p-2}\lvert \cdot\rvert^2\right)^\ast(\xi)\gtrsim (\lvert x\rvert^{p-1}+\lvert \xi\rvert)^{p^\prime-2}\lvert \xi\rvert^2.
\end{align*}
In particular, recalling \eqref{eq:sizeDualGradient}, we deduce
\begin{align*}
V_p(\nabla c^\ast(\xi_1),\cdot)^\ast(\xi_2-\xi_1)\gtrsim (\lvert \nabla c^\ast(\xi_1)\rvert^{p-1}+\lvert \xi_1-\xi_2\rvert)^{p^\prime-2} \lvert \xi_1-\xi_2\rvert^2\gtrsim V_{p^\prime}(\xi_1-\xi_2).
\end{align*}
Employing the fact that for any convex $f\colon \R^d\to \R$ and any $x,\xi\in \R^d$, it holds that ${f(\cdot-x)^\ast(\xi) = f^\ast(\xi)+\langle x,\xi\rangle}$, we finally conclude,
\begin{align*}
\left(C V_p(\nabla c^\ast(\xi_1),\cdot-\nabla c^\ast(\xi_1)\right)^\ast(\xi_2-\xi_1)\gtrsim V_{p^\prime}(\xi_1-\xi_2)+\langle \nabla c^\ast(\xi_1),\xi_2-\xi_1).
\end{align*}
Combining this estimate with \eqref{eq:p'convex3} gives \eqref{eq:p'convex}.

\subsection{Regularity assumptions on the dual system}\label{sec:regularity}
Let $R\in (2,3)$. In this section, we state the regularity assumptions we make on distributional solutions $\phi\in W^{1,p'}(B)$ of the equation
\begin{align}\label{eq:dualEquation}
-\Div \nabla c^\ast(\D\phi) &= c_g \quad\text{ on } B_R\\
\nabla c^\ast(\D\phi)\cdot\nu &= g \quad\text{ on } \p B_R,
\end{align}
where $g\in L^{p}(B)$ and $c_g$ satisfies the compatibility condition $\lvert B_R\rvert c_g = \int_{\p B} g$. $\nu$ denotes the outward pointing normal vector on $\p B$. We will show that solutions exist and are unique up to a constant. Hence, we usually normalise solutions by requiring that $\int_B \phi = 0$. Fixing $g\in L^p(\p B)$, we denote by $\phi^r$ the solution satisfying $\int_B \phi^r=0$ of  \eqref{eq:dualEquation} with data $g^r$, where $g^r$ denotes convolution of $g$ with a smooth convolution kernel on $\partial B$ at scale $r$.

\begin{lemma}\label{lem:ellipticRegularity}
If $c^\ast$ satisfies \eqref{ass:elliptic}-\eqref{eq:controlledGrowth}, then solutions $\phi$ to \eqref{eq:dualEquation} exist, are unique up to constant and the following statements hold:
\begin{enumerate}
\item $\phi$ satisfies the following energy estimates:
\begin{align}
\label{eq:energy}\int_{B_R}|\D\phi|^{p'}\d x\lesssim \int_{\p B_R} |g|^p,\\
\label{eq:alternativeEnergy}\int_{B_R} c(\nabla c^\ast(\D\phi))\d x\lesssim \int_{\p B_R} |g|^p.
\end{align}
\item $\phi$ is Lipschitz regular in the interior of $B_R$: For any $r<R$,
\begin{align}\label{eq:interior}
\sup_{x\in B_r} \lvert \D\phi\rvert^{p^\prime} \lesssim_{R-r} \int_{\p B_R} \lvert g\rvert^p.
\end{align}
\item The difference between $\phi$ and $\phi^r$ is controlled: There exists $s=s(n,p)>0$ such that
\begin{align}\label{eq:diff}
\int_{B_R} |\D\phi-\D\phi^r|^{p'}\d x\lesssim r^s\int_{\p B_R} |g|^p.
\end{align}
\item $\D\phi^r$ is H\"older-regular up to the boundary: For any $\beta\in (0,1)$,
\begin{align}\label{eq:regularityPhiR}
r^\beta[\D\phi^r]_{C^{0,\beta}(B_R)}^{p'}+\sup_{B_R} |\D\phi^r|^{p'}\lesssim \frac 1 {r^{d-1}}\int_{\p B_R} |g|^{p}.
\end{align}
\item Let $\beta\in(0,1)$. Whenever $\phi\in C^{0,\beta}(B)$ for some ball $B$, 
\begin{align*}
[c^\ast(\D\phi)+c(\nabla c^\ast(\D\phi))]_{C^{0,\beta}(B)} \lesssim \|\D\phi\|_{L^\infty(B)}^{p'-1}[D\phi]_{C^{0,\beta}(B)}.
\end{align*}
\end{enumerate}
\end{lemma}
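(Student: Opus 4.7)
My plan is to combine the two terms on the left-hand side via the Fenchel--Young identity. Since equality in \eqref{eq:FenchelYoung} is attained precisely when $x = \nabla c^\ast(\xi)$, one has the pointwise identity
\begin{align*}
c^\ast(\xi) + c(\nabla c^\ast(\xi)) = \langle \xi, \nabla c^\ast(\xi)\rangle, \qquad \xi \in \R^d.
\end{align*}
Evaluated at $\xi = \D\phi(x)$, this reduces the statement to estimating the H\"older seminorm on $B$ of the scalar function $F(x) := \langle \D\phi(x), \nabla c^\ast(\D\phi(x))\rangle$.

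For $x_1, x_2 \in B$, I would apply the telescoping identity
\begin{align*}
F(x_1)-F(x_2) = \langle \D\phi(x_1)-\D\phi(x_2), \nabla c^\ast(\D\phi(x_1))\rangle + \langle \D\phi(x_2), \nabla c^\ast(\D\phi(x_1))-\nabla c^\ast(\D\phi(x_2))\rangle
\end{align*}
and bound both terms by Cauchy--Schwarz. For the first term, the size bound \eqref{eq:sizeDualGradient} gives $|\nabla c^\ast(\D\phi(x_1))| \lesssim |\D\phi(x_1)|^{p'-1} \leq \|\D\phi\|_{L^\infty(B)}^{p'-1}$. For the second, \eqref{eq:c1growthDual} yields
\begin{align*}
|\nabla c^\ast(\D\phi(x_1)) - \nabla c^\ast(\D\phi(x_2))| \lesssim (|\D\phi(x_1)|+|\D\phi(x_2)|)^{p'-2} |\D\phi(x_1) - \D\phi(x_2)|,
\end{align*}
and the leftover factor $|\D\phi(x_2)|$ from the decomposition is absorbed via the elementary bound $|b|(|a|+|b|)^{p'-2} \leq (|a|+|b|)^{p'-1}$, which is valid for all $p' > 1$ regardless of the sign of $p'-2$.

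Adding the two contributions, I obtain
\begin{align*}
|F(x_1) - F(x_2)| \lesssim (|\D\phi(x_1)|+|\D\phi(x_2)|)^{p'-1} |\D\phi(x_1) - \D\phi(x_2)| \lesssim \|\D\phi\|_{L^\infty(B)}^{p'-1} [\D\phi]_{C^{0,\beta}(B)} |x_1-x_2|^\beta,
\end{align*}
which is the desired estimate.

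The only mildly subtle point is the regime $p>2$, where $p'<2$ and the right-hand side of \eqref{eq:c1growthDual} is singular at the origin; the Fenchel--Young decomposition bypasses this because the singular factor $(|\D\phi(x_1)|+|\D\phi(x_2)|)^{p'-2}$ always appears paired with $|\D\phi(x_2)|$. Alternatively, one could estimate $c^\ast(\D\phi)$ and $c(\nabla c^\ast(\D\phi))$ separately using \eqref{eq:CgrowthDual} and the combination of \eqref{ass:Cgrowth} with \eqref{eq:sizeDualGradient} and \eqref{eq:c1growthDual}, but this route requires more careful exponent bookkeeping, exploiting $(p-1)(p'-1)=1$ to collapse the resulting powers to $(|\D\phi(x_1)|+|\D\phi(x_2)|)^{p'-1}$.
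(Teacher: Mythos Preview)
Your proposal addresses only part (v) of the lemma; parts (i)--(iv) and the existence/uniqueness claim are not touched. In the paper these are handled separately: existence/uniqueness by the direct method, \eqref{eq:energy}--\eqref{eq:alternativeEnergy} by testing the equation with $\phi$ and using the trace and Poincar\'e inequalities, \eqref{eq:interior} and \eqref{eq:regularityPhiR} by quoting \cite{Lieberman1988,Ladyzhenskaya1968}, and \eqref{eq:diff} by testing the difference equation and using a fractional trace/convolution estimate. If you intended to prove the full lemma, these pieces are still missing.

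For part (v) itself, your argument is correct and takes a genuinely different route from the paper. The paper simply cites \eqref{ass:Cgrowth}, \eqref{eq:CgrowthDual}, \eqref{eq:c1growthDual} and \eqref{eq:sizeDualGradient} and estimates the two summands $c^\ast(\D\phi)$ and $c(\nabla c^\ast(\D\phi))$ separately --- exactly the ``alternative'' you sketch in your last paragraph, which indeed requires the identity $(p-1)(p'-1)=1$ to collapse the exponents coming from the composition $c\circ\nabla c^\ast$. Your approach instead invokes the equality case of Fenchel--Young to rewrite the sum as the single bilinear form $\langle \D\phi,\nabla c^\ast(\D\phi)\rangle$ and then telescopes. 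This is cleaner: the pairing automatically places the factor $(|\D\phi(x_1)|+|\D\phi(x_2)|)^{p'-2}$ next to $|\D\phi(x_2)|$, so the elementary bound $|b|(|a|+|b|)^{p'-2}\le(|a|+|b|)^{p'-1}$ handles both regimes $p'\gtrless 2$ uniformly, with no case distinction and no exponent bookkeeping. The paper's term-by-term argument buys nothing extra here; your version is a strict simplification of part (v).
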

\begin{proof}
Note that in light of the results of Section \ref{sec:costFunction}, $c^\ast$ is $p^\prime$-convex and satisfies controlled $p^\prime$-growth. Hence, the statements we need to prove are largely standard.

The existence and uniqueness up to constant of solutions follows from the direct method. Testing the weak formulation of \eqref{eq:dualEquation} with $\phi$ and applying \eqref{eq:p'convex} in combination with H\"older's inequality, the trace estimate and Poincar\'{e} inequality (recall that $\int_B \phi = 0$) gives:
\begin{align*}
\|\D\phi\|_{L^{p^\prime}(B_R)}^{p^\prime}\lesssim& \int_{B_R} \langle \nabla c^\ast(\D\phi),\D\phi\rangle\d x = \int_{\p B_R} g \phi\leq \|\phi\|_{L^{p^\prime}(\p B_R)}\|g\|_{L^{p}(\p B_R)}\\
\lesssim&\|\phi\|_{W^{1,p^\prime}(B_R)}\|g\|_{L^{p}(\p B_R)}\lesssim \|\D\phi\|_{L^{p^\prime}(B_R)}\|g\|_{L^{p}(\p B_R)}.
\end{align*} 
Re-arranging this gives \eqref{eq:energy}. Using \eqref{ass:growth} and \eqref{eq:sizeDualGradient}, \eqref{eq:energy} implies that also
\begin{align*}
\int_{B_R} c(\nabla c^\ast(\D\phi))\d x\lesssim \int_{B_R} \lvert \nabla c^\ast(\D\phi)\rvert^p\d x\lesssim \int_{B_R}\lvert \D\phi\rvert^{p^\prime}\d x\lesssim \int_{\p B_R} |g|^p,
\end{align*}
that is \eqref{eq:alternativeEnergy} holds. 

\eqref{eq:interior} is proven in \cite{Lieberman1988} and \cite{Ladyzhenskaya1968}.  As the proofs are quite involved, we do not comment on them here. Instead we turn to \eqref{eq:diff}. We focus on the case $p^\prime \leq n$ as the other case is easier. Testing the equations for $\phi$ and $\phi^r$ with $\D\phi-\D\phi^r$ and applying \eqref{eq:p'convex} and H\"older's inequality, we find
\begin{align*}
\int_{B_R} V_{p^\prime}(\D\phi,\D\phi^r)\lesssim& \int_{B_R} \langle\nabla c^\ast(\D\phi)-\nabla c^\ast(\D\phi^r),\D\phi-\D\phi^r)\d x= \int_{\p B_R} (g-g^r)(\phi-\phi^r)\\
\lesssim& \|g-g^r\|_{L^{\frac{p^\prime(n-1)}{n(p^\prime-1)}}(\p B_R)}\|\phi-\phi^r\|_{L^{\frac{p^\prime(n-1)}{n-p^\prime}}(\p B_R)}
\end{align*}
By a standard trace estimate and Poincar\'{e}'s inequality
\begin{align*}
\|\phi-\phi^r\|_{L^{\frac{p^\prime(n-1)}{n-p^\prime}}(\p B_R)}\lesssim\|\phi-\phi^r\|_{W^{1,p^\prime}(B_R)}\lesssim \|D\phi-\D\phi^r\|_{L^{p^\prime}(B_R)}.
\end{align*}
Note $\frac{p^\prime(n-1)}{n-p^\prime} =\frac{(n-1)p} n$. By standard properties of convolution, 
\begin{align*}
\|g-g^r\|_{L^\frac{p(n-1)}{n}(\p B_R)}\lesssim r^\frac 1 {n-1} \|g\|_{L^p(\p B_R)}.
\end{align*} If $p\leq 2$, since $V_{p^\prime}(\D\phi,\D\phi^r)\geq \lvert \D\phi-\D\phi^r\rvert^{p^\prime}$, combining estimates and re-arranging concludes the proof. If $p^\prime\leq 2$, we apply H\"older's inequality to see
\begin{align*}
\int_{B_R} V_{p^\prime}(\D\phi,\D\phi^r) \geq \|\D\phi-\D\phi^r\|_{L^{p^\prime}(B_R)}^2(\|\lvert \D\phi\rvert+\lvert \D\phi^r\rvert\|_{L^{p^\prime}(B_R)}^{p^\prime-2}.
\end{align*}
Since due to \eqref{eq:energy} and standard properties of convolution,
\begin{align*}
\||\lvert \D\phi\rvert+\lvert \D\phi^r\rvert\|_{L^{p^\prime}(B_R)}\lesssim \|D\phi\|_{L^{p^\prime}(B_R)}+\|\D\phi^r\|_{L^{p^\prime}(B_R)}\lesssim \|g\|_{L^p(\p B_R)}^{p-1},
\end{align*}
this again gives \eqref{eq:diff}.

\eqref{eq:regularityPhiR} follows from \cite{Lieberman1988} and \cite{Ladyzhenskaya1968}. Again the proof is involved and we don't comment on it here.

Finally, we note by direct calculation using \eqref{ass:Cgrowth}, \eqref{eq:CgrowthDual}, \eqref{eq:c1growthDual} and \eqref{eq:sizeDualGradient} that 
\begin{align*}
[c^\ast(\D\phi)+c(\nabla c^\ast(\D\phi))]_{C^{0,\beta}(B)} \lesssim \|\D\phi\|_{L^\infty(B)}^{p'-1}[D\phi]_{C^{0,\beta}(B)}.
\end{align*}
This concludes the proof.
\end{proof}

\section{Preliminaries}\label{sec:prelim}

\subsection{General notation}
Throughout, we let $1<p<\infty$. $B_r(x)$ will denote a ball of radius $r>0$ centered at $x\in \R^d$. We further write $B_r=B_r(0)$ and $B=B_1(0)$. $c$ denotes a generic constant that may change from line to line. Relevant dependencies on $\Lambda$, say, will be denoted $c(\Lambda)$. We say $a\lesssim b$ and $a\gtrsim b$, if there exists a constant $c>0$ depending only on $d$, $p$ and $\Lambda$ such that $a\leq c b$ and $a\geq c b$, respectively.

 Given $\Omega\subset\R^d$, we denote by $[\cdot]_{C^{0,\alpha}}$, the $\alpha$-H\"older-seminorm. Given $\alpha\in(0,\infty)$, $L^p(\Omega)$ and $W^{\alpha,p}(\Omega)$ denote the usual Lebesgue and (fractional) Sobolev spaces. If $\mu$ is a measure on $\R^d$, $\mu\llcorner \Omega$ denotes its restriction to $\Omega$. We say a couple of measures $\lambda$, $\mu$ on $\R^d$ is admissible if $\lambda,\mu$ are non-negative finite measures satisfying $\lambda(\R^d)=\mu(\R^d)$.

Given $R>0$, we let $\Pi_R(x)=R\frac x {|x|}$ be the projection onto $\p B_R$ and define for every measure $\rho$ on $\R^d$ the projected measure on $\p B_R$, $\hat \rho = \Pi_R \# \rho$, i.e.
\begin{align}\label{eq:radial}
\int \xi \d\hat\rho = \int \xi\left(R\frac x {|x|}\right)\d\rho(x).
\end{align}

A set $\Omega\subset\R^d\times \R^d$ is said to be $c$-cyclically monotone if for any $N\in \N$ and any points $(x_1,y_1),\ldots,(x_N,y_N)\in\Omega$, there holds
$$
\sum_{i=1}^N c(x_i-y_i)\leq \sum_{i=1}^N c(x_i-y_{i+1}),
$$
where we identify $y_{N+1}=y_1$.

A function $f\colon \R^d\to \R$ is called $c$-concave if there exists a function $g\colon \R^d\to \R$ such that 
$$
f(x)=\inf_{y\in \R^d} c(x-y)-g(y)
$$
for all $x\in \R^d$.

\subsection{Optimal transportation}
We recall some definitions and facts about optimal transportation, see \cite{Villani2009} for more details. For this subsection, the full strength of our assumptions \eqref{ass:elliptic}-\eqref{eq:controlledGrowth} is not needed. In fact, assuming that $c$ is lower semi-continuous, convex and satisfies $p$-growth \eqref{ass:growth} would be sufficient in this subsection.

 Given a measure $\pi$ on $\R^d\times\R^d$ we denote its marginals by $\pi_1$ and $\pi_2$ respectively. The set of measures on $\R^d\times \R^d$ with marginals $\pi_1$ and $\pi_2$ is denoted $\Pi(\pi_1,\pi_2)$. Given two positive measures with compact support and equal mass $\lambda$ and $\mu$ we define
$$
W_c(\lambda,\mu)=\min_{\pi_1 = \lambda,\pi_2=\mu} \int c(x-y)\d\pi.
$$
While our notation is reminiscent of the Wasserstein distance, and in fact gives the ($p$-th power of the) Wasserstein $p$-distance in the case  ${c(x-y) = \lvert x-y\rvert^p}$, in general it is not a distance on measures.
Under our hypothesis, an optimal coupling always exists and moreover a coupling $\pi$ is optimal, if and only if its support is $c$-cyclical monotone. 

Moreover, we note the following triangle-type inequality:
\begin{lemma}\label{lem:triangleInequality}
Let $\e\in(0,1)$. There is $C(\e)>0$ such that for any admissible measures $\mu_1, \mu_2, \mu_3$ it holds that
\begin{align*}
W_c(\mu_1,\mu_3)\leq (1+\e) W_c(\mu_1,\mu_2)+C(\e) W_c(\mu_2,\mu_3).
\end{align*}
\end{lemma}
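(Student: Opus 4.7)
The plan is to combine the gluing lemma from optimal transport with a pointwise quasi-triangle inequality for the cost $c$. Fix optimal couplings $\pi_{12}\in\Pi(\mu_1,\mu_2)$ and $\pi_{23}\in\Pi(\mu_2,\mu_3)$ realising $W_c(\mu_1,\mu_2)$ and $W_c(\mu_2,\mu_3)$. Disintegrating $\pi_{12}$ and $\pi_{23}$ with respect to their common marginal $\mu_2$, the standard gluing construction produces a measure $\sigma$ on $\R^d\times\R^d\times\R^d$ whose projections to the first two and last two coordinates equal $\pi_{12}$ and $\pi_{23}$, respectively. The projection $\pi_{13}$ of $\sigma$ onto the first and third coordinates is then an admissible coupling of $\mu_1$ and $\mu_3$, so
\[
W_c(\mu_1,\mu_3)\leq \int c(x_1-x_3)\d\pi_{13}=\int c\bigl((x_1-x_2)+(x_2-x_3)\bigr)\d\sigma.
\]

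The heart of the argument is the pointwise claim that, for every $\e\in(0,1)$, there exists $C(\e)>0$ (allowed to depend also on $p,d,\Lambda$) such that
\[
c(a+b)\leq (1+\e)\,c(a)+C(\e)\,c(b)\qquad \text{for all } a,b\in\R^d.
\]
To establish this, I would first note that $c(0)=0$ by \eqref{ass:growth} and hence $\nabla c(0)=0$, so \eqref{eq:controlledGrowth} yields $|\nabla c(x)|\lesssim |x|^{p-1}$. Writing
\[
c(a+b)-c(a)=\int_0^1 \langle\nabla c(a+tb),b\rangle\d t\lesssim (|a|+|b|)^{p-1}|b|,
\]
a standard Young-type splitting gives, for any $\delta>0$, a constant $C'(\delta)$ with $(|a|+|b|)^{p-1}|b|\leq \delta|a|^p+C'(\delta)|b|^p$. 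Applying \eqref{ass:growth} on both sides (upper bound on the left, lower bound on the right) and choosing $\delta$ small enough in terms of $\e,p,\Lambda$ yields the quasi-triangle inequality in the required form.

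Integrating the pointwise inequality against $\sigma$ and recalling that the marginals of $\sigma$ on pairs of coordinates are the optimal couplings $\pi_{12}$ and $\pi_{23}$ gives
\[
W_c(\mu_1,\mu_3)\leq (1+\e)\int c(x_1-x_2)\d\sigma+C(\e)\int c(x_2-x_3)\d\sigma=(1+\e)W_c(\mu_1,\mu_2)+C(\e)W_c(\mu_2,\mu_3),
\]
as desired. The only subtle point is the pointwise inequality: a naive combination of the upper and lower $p$-growth bounds in \eqref{ass:growth} produces a prefactor $\Lambda^2(1+\e')$ on $c(a)$, which is why I exploit the first-order expansion of $c$ together with \eqref{eq:controlledGrowth} instead, so that the unit prefactor is perturbed only by a quantity controlled by $\delta$, not by $\Lambda^2-1$.
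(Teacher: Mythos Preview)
Your proof is correct. Both you and the paper start from the gluing lemma and reduce matters to a pointwise quasi-triangle inequality $c(a+b)\leq(1+\e)c(a)+C(\e)c(b)$; the difference lies in how this is obtained. The paper writes $a+b=t\cdot\tfrac{a}{t}+(1-t)\cdot\tfrac{b}{1-t}$, applies convexity of $c$, passes through the $L^p(\gamma)$ triangle inequality, and then uses the two-sided bound \eqref{ass:growth} to compare $c(a/t)$ with $c(a)$, sending $t\to1$. You instead bound $c(a+b)-c(a)$ by a first-order expansion, control the remainder by $(|a|+|b|)^{p-1}|b|$, and split via Young. Your route makes the $(1+\e)$ prefactor transparent: as you correctly observe, a naive application of \eqref{ass:growth} alone produces a stray factor $\Lambda^2$, and indeed the paper's final displayed line carries the coefficient $(\Lambda^2 t^{1-p})^{1/p}$, which does not tend to $1$ as $t\to1$ without a further (unwritten) step. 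One small simplification of your argument: you can cite \eqref{ass:Cgrowth} directly to get $c(a+b)-c(a)\leq\Lambda(2|a|+|b|)^{p-1}|b|$, avoiding the detour through $\nabla c$ and \eqref{eq:controlledGrowth}.
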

\begin{proof}
Due to the gluing lemma, see e.g. \cite[Lemma 5.5.]{Santambrogio2015}, there exists $\sigma$, a positive measure on $\R^d\times \R^d\times \R^d$ with marginal $\pi_1$ on the first two variables and marginal $\pi_2$ on the last two variables. Here $\pi_1$ and $\pi_2$ are the optimal couplings between $\mu_1$ and $\mu_2$ and $\mu_2$ and $\mu_3$, respectively, with respect to $W_c$. Set $\gamma$ to be the marginal of $\sigma$ with respect to the first and third variable. Then $\gamma\in \Pi(\mu_1,\mu_3)$. It follows using the convexity of $c$ and the triangle inequality in $L^p(\gamma)$ that for any $t\in(0,1)$,
\begin{align*}
W_c(\mu_1,\mu_3)\leq& \left(\int c(x-z)\d\gamma\right)^\frac 1 p\leq \left(\int t c\left(\frac{x-y} t\right)+(1-t)\left(\frac{y-z}{1-t}\right)\d\gamma\right)^\frac 1 p\\
\leq& \left(\int \left( \left(t c\left(\frac{x-y} t\right)\right)^\frac 1 p+\left((1-t)\left(\frac{y-z}{1-t}\right)\right)^\frac 1 p\right)^p\d\gamma\right)^\frac 1 p\\
\leq& \left(\int t c\left(\frac{x-y} t\right)\d\gamma\right)^\frac 1 p+\left(\int (1-t) c\left(\frac{y-z}{1-t}\right)\d\gamma\right)^\frac 1 p.
\end{align*}
Using \eqref{ass:growth} and recalling the definition of $\gamma$, we deduce
\begin{align*}
W_c(\mu_1,\mu_3)\leq \left(\Lambda^2 t^{1-p}\right)^\frac 1 p W_c(\mu_1,\mu_2)+\left(\Lambda^2 (1-t)^{1-p}\right)^\frac 1 pW_c(\mu_2,\mu_3).
\end{align*}
Choosing $t$ sufficiently close to $1$, this gives the desired estimate.
\end{proof}

We require also the following consequence of Lemma \ref{lem:triangleInequality}.
\begin{corollary}\label{cor:addConstant}
Let $\mu_1,\mu_2$ be admissible measures. Then
\begin{align*}
W_c(\mu_1,\mu_2)\lesssim W_c(\mu_1+\mu_2,2\mu_2).
\end{align*}
\end{corollary}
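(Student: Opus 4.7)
The plan is to apply Lemma~\ref{lem:triangleInequality} with $(\mu_1+\mu_2)/2$ as the intermediate measure and then absorb, taking advantage of the exact homogeneity
$$W_c(a\rho_1,a\rho_2)=a\,W_c(\rho_1,\rho_2)\qquad\text{for }a>0,$$
which follows immediately by rescaling couplings since the cost enters linearly in the integral.

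First I would bound $W_c\bigl(\mu_1,(\mu_1+\mu_2)/2\bigr)$ by exhibiting the explicit competitor
$$\tilde\pi=\tfrac{1}{2}\pi_0+\tfrac{1}{2}(\tp{Id},\tp{Id})_{\#}\mu_1,$$
where $\pi_0$ is optimal for $W_c(\mu_1,\mu_2)$. Its first marginal is $\tfrac{1}{2}\mu_1+\tfrac{1}{2}\mu_1=\mu_1$ and its second marginal is $\tfrac{1}{2}\mu_2+\tfrac{1}{2}\mu_1=(\mu_1+\mu_2)/2$. Since $c(0)=0$ by \eqref{ass:growth}, the identity component contributes no cost, and one obtains
$$W_c\bigl(\mu_1,(\mu_1+\mu_2)/2\bigr)\leq \tfrac{1}{2}W_c(\mu_1,\mu_2).$$

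Next I would apply the homogeneity identity with $a=\tfrac{1}{2}$, $\rho_1=\mu_1+\mu_2$, $\rho_2=2\mu_2$ to get $W_c\bigl((\mu_1+\mu_2)/2,\mu_2\bigr)=\tfrac{1}{2}W_c(\mu_1+\mu_2,2\mu_2)$. Plugging both bounds into Lemma~\ref{lem:triangleInequality} with an arbitrary $\varepsilon\in(0,1)$ yields
$$W_c(\mu_1,\mu_2)\leq \tfrac{1+\varepsilon}{2}\,W_c(\mu_1,\mu_2)+\tfrac{C(\varepsilon)}{2}\,W_c(\mu_1+\mu_2,2\mu_2).$$
Choosing, e.g., $\varepsilon=\tfrac{1}{2}$ makes the coefficient $\tfrac{1+\varepsilon}{2}=\tfrac{3}{4}<1$, and the first term on the right can be absorbed into the left-hand side, concluding the proof with implicit constant depending on $C(\tfrac{1}{2})$, hence only on $p$, $d$, and $\Lambda$.

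I do not anticipate any real obstacle here; the only point to be careful about is that the absorption step tacitly requires $W_c(\mu_1,\mu_2)<\infty$, which holds under the standing admissibility and support/moment assumptions on the measures throughout the paper, and otherwise the stated inequality is vacuous.
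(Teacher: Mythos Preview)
Your proof is correct and follows essentially the same route as the paper: both use $(\mu_1+\mu_2)/2$ as the intermediate measure in Lemma~\ref{lem:triangleInequality}, bound $W_c(\mu_1,(\mu_1+\mu_2)/2)\le \tfrac12 W_c(\mu_1,\mu_2)$ (the paper phrases this as sub-additivity of $W_c$, which amounts to exactly your competitor construction), invoke homogeneity for the second term, and absorb. The only cosmetic difference is that you spell out the competitor explicitly, while the paper cites sub-additivity.
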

\begin{proof}
Using Lemma \ref{lem:triangleInequality} and sub-additivity of $W_c$, we note for any $\delta>0$,
\begin{align*}
W_c(\mu_1,\mu_2)\leq& (1+\delta) W_c\left(\mu_1,\frac 1 2 (\mu_1+\mu_2)\right)+C(\delta)W_c\left(\frac 1 2 (\mu_1+\mu_2),\mu_2\right)\\
=& (1+\delta) W_c\left(\frac 1 2 \mu_1,\frac 1 2 \mu_2\right)+c(\delta)W_c\left(\frac 1 2 (\mu_1+\mu_2),\mu_2\right)\\
\leq& \frac{1+\delta} 2 W_c\left(\mu_1,\mu_2\right)+C(\delta)W_c\left(\mu_1+\mu_2,2\mu_2\right).
\end{align*}
Re-arranging gives the result.
\end{proof}

We remark that the Benamou-Brenier formula \eqref{eq:eulerianFormulation} needs to be interpreted via duality in general, that is we set
\begin{align}\label{eq:BenamouBrenierDefinition}
\int c\left(\frac{\d j_t}{\d\rho_t}\right)\d \rho_t = \sup_{\zeta\in C^0_c(\R^d)} \left\{\int\zeta \d j_t - \int c^\ast(\xi)\d \rho_t\right\}.
\end{align}

Given $O\subset \R^d$ set $\kappa_{\mu,O}$ to be the generic constant such that $W_{c}(\mu\llcorner O,\kappa_{\mu,O}\d x \llcorner O)$ is well-defined, that is $\kappa_{\mu,O} = \frac{\mu(O)}{\lvert O\rvert}$. If $O=B_R$, we write $\kappa_{\mu,R} = \kappa_{\mu,B_R}$.

It will be convenient to denote $\#_R = (B_R\times \R^d)\cup (\R^d\times B_R)$. We recall the definition of the quantities that we use to measure smallness:
\begin{gather*}
E(R):= \frac 1 {\lvert B_R\rvert} \int_{\#_R} c(x-y)\d \pi,\\
D(R):= \frac 1 {\lvert B_R\rvert} W_p^p(\lambda\llcorner B_R,\kappa_{\lambda,R}\d x\llcorner B_R)+\frac{R^p}{\kappa_{\lambda,R}^{p-1}}(\kappa_{\lambda,R}-1)^p \\
\qquad\qquad+\frac 1 {\lvert B_R\rvert} W_p^p(\mu\llcorner B_R,\kappa_{\mu,R}\d x\llcorner B_R)+\frac{R^p}{\kappa_{\mu,R}^{p-1}}(\kappa_{\mu,R}-1)^p.
\end{gather*}

We will find it convenient to work with trajectories $X(t) = t x+(1-t)y$. In this context, it is useful to work on the domain
\begin{align*}
\Omega_R = \{(x,y)\in \#_3\colon \exists t\in[0,1] \text{ s.t. } X(t)\in \overline B_R\}.
\end{align*}
To every trajectory $X\in\Omega$, we associate entering and exiting times of $B_R$:
\begin{gather*}
\sigma_R:= \min\{t\in[0,1]\colon X(t)\in \overline B_R\}\\
\tau_R:= \max\{t\in[0,1]\colon X(t)\in \overline B_R\}.
\end{gather*}
Often, we will drop the subscripts and denote $\Omega = \Omega_R$, $\sigma=\sigma_R$ and $\tau = \tau_R$.
Further, we will need to track trajectories entering and leaving $B_R$. This is achieved through the non-negative measures $f_R$ and $g_R$ concentrated on $\p B_R$ and defined by the relations
\begin{gather}\label{eq:2}
\int\zeta \d f_R = \int_{\Omega \cap \{X(\sigma)\in \p B_R\}} \zeta(X(\sigma))\d \pi,\nonumber\\
\int\zeta \d g_R = \int_{\Omega \cap \{X(\tau)\in \p B_R\}} \zeta(X(\sigma))\d \pi.
\end{gather}
Note that the set of trajectories $\Omega \cap \{X(\sigma)\in \p B_R\}$ implicitly defines a Borel measurable subset of $\R^d\times \R^d$, namely the pre-image under the mapping $(x,y)\to X$, which is continuous from $\R^d\times \R^d$ into $C^0([0,1])$. Thus, the integrals in \eqref{eq:2} are well-defined. We will often use similar observations without further justification.

\subsection{Estimating radial projections}
We record a technical estimate concerning radial projections we will require.
\begin{lemma}\label{lem:projection}
For $R>0$, there exists $1\geq \e(d)>0$ such that for every $g\geq 0$ with $\mathrm{Spt}\, g\subset B_{(1+\e)R}\setminus B_{(1-\e)R}$ we have
\begin{align*}
R^{1-d}\left(\int g\right)^p\lesssim \int_{\p B_R} \hat g^p\lesssim \sup g^{p-1} \int |R-|x||^{p-1}g
\end{align*}
Here $\hat g$ is the radial projection of $g$ defined in \eqref{eq:radial}.
\end{lemma}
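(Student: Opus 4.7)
The plan is to pass to spherical coordinates $x=r\theta$ with $\theta\in S^{d-1}$. The defining identity for the radial projection gives
\begin{align*}
\hat g(R\theta)\,R^{d-1} = \int g(r\theta)\,r^{d-1}\,\d r,
\end{align*}
and mass-preservation of $\Pi_R$ yields $\int g\,\d x = \int_{\p B_R}\hat g\,\d\mathcal H^{d-1}$. Choosing $\e=\e(d)$ small enough that $r^{d-1}$ is comparable to $R^{d-1}$ on $((1-\e)R,(1+\e)R)$, one identifies $\hat g(R\theta)$ with $G(\theta):=\int g(r\theta)\,\d r$ up to dimensional constants. This reduces everything to estimates in the single radial variable.

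For the lower bound I would simply apply H\"older's inequality on $\p B_R$ against the constant function $1$:
\begin{align*}
\int g = \int_{\p B_R}\hat g\,\d\mathcal H^{d-1}\leq \lvert\p B_R\rvert^{1/p'}\Big(\int_{\p B_R}\hat g^p\,\d\mathcal H^{d-1}\Big)^{1/p},
\end{align*}
and then raise to the $p$-th power; since $\lvert\p B_R\rvert^{p-1}\sim R^{(d-1)(p-1)}$ is absorbed in the implicit constant, this yields the claimed bound.

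For the upper bound I would integrate in $\theta$ and reduce, for each fixed direction, to
\begin{align*}
\Big(\int h\,\d r\Big)^p \lesssim (\sup h)^{p-1}\int h(r)\,|r-R|^{p-1}\,\d r
\end{align*}
with $h(r):=g(r\theta)$. Normalising $h$ to be $[0,1]$-valued, the layer-cake identity $\int h\,\d r=\int_0^1 \lvert A_t\rvert\,\d t$ with $A_t=\{h>t\}\subset [(1-\e)R,(1+\e)R]$ and Jensen's inequality on the probability space $[0,1]$ give $\big(\int h\,\d r\big)^p \leq \int_0^1 \lvert A_t\rvert^p\,\d t$. The geometric heart of the matter is the one-dimensional rearrangement bound
\begin{align*}
\int_A |r-R|^{p-1}\,\d r \geq \frac{\lvert A\rvert^p}{p\,2^{p-1}}\qquad\text{for }A\subset[(1-\e)R,(1+\e)R],
\end{align*}
obtained by splitting $A=A^+\cup A^-$ at the centre $R$, using the monotonicity of $|r-R|^{p-1}$ away from $R$ to rearrange each $A^\pm$ to abut $R$ (giving $\lvert A^\pm\rvert^p/p$ and permitted since $\lvert A^\pm\rvert\leq \e R$), and then invoking the convexity inequality $\lvert A^+\rvert^p+\lvert A^-\rvert^p \geq 2^{1-p}\lvert A\rvert^p$. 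Integrating in $t\in[0,1]$ recovers $\int h\,|r-R|^{p-1}\,\d r$ on the right and closes the estimate; inverting the normalisation by $\sup h$ and tracking the dimensional prefactors coming from $\hat g(R\theta)\sim G(\theta)$ produces the desired bound. I expect the rearrangement step to be the main obstacle (it is the only genuinely non-H\"older ingredient), although it is elementary once the thin-shell condition on $\e$ is used to guarantee that the symmetric minimising interval lies inside the support.
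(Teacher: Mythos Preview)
Your proof is correct and follows essentially the same strategy as the paper: the lower bound is Jensen/H\"older on $\partial B_R$, and the upper bound is obtained direction by direction via rearrangement around $r=R$. The paper's version is marginally more direct in that, after normalising $R=\sup g=1$, it applies the bathtub principle to the function $\psi(r)=r^{d-1}g(r\omega)$ in one stroke (the minimiser of $\int|1-r|^{p-1}\tilde\psi$ over $0\le\tilde\psi\le 2$ with $\int\tilde\psi=\hat g(\omega)$ is $2\cdot I(|r-1|\le\tfrac14\hat g(\omega))$), which yields $\int|1-r|^{p-1}\psi\gtrsim\hat g(\omega)^p$ immediately and thereby collapses your layer-cake, Jensen, and set-rearrangement steps into a single line.
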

\begin{proof}
By scaling we may assume $R=\sup g =1$. The first inequality is then a direct consequence of Jensen's inequality.

For the second inequality, note that if $\e\ll 1$, $\sup_{\p B_1} |\hat g|\ll 1$, since we assume ${\mathrm{Spt}\, g\subset B_{1+\e}\setminus B_{1-\e}}$. Fix $\omega\in \p B_1$ and set $\psi(r)=r^{d-1}g(r\omega)$ for $r>0$. Then we have ${0\leq \psi\leq (1+\e)^{d-1}\leq 2}$ and
$$
\int_0^\infty\psi = \hat g(\omega).
$$
We conclude that for $\omega\in \p B_1$,
$$
\int_0^\infty |1-r|^{p-1} r^{d-1}g(r\omega)\geq \min_{0\leq \tilde\psi\leq 2, \int\tilde\psi = \hat g(\omega)} \int_0^\infty |1-r|^{p-1} \tilde \psi(r)\gtrsim \hat g(\omega).
$$
The last inequality holds, since the minimiser of
$$
\min_{0\leq \tilde\psi\leq 2, \int\tilde\psi = \hat g(\omega)} \int_0^\infty |1-r|^{p-1} \tilde \psi(r)
$$
is given by $2I\left(|r-1|\leq \frac 1 4 \hat g(\omega)\right)$.
\end{proof}

\section{A \texorpdfstring{$L^\infty$}{}-bound on the displacement}\label{sec:Linfty}
A key point in our proof will be that trajectories do not move very much. Since we assume $E(4)\ll 1$, this is evidently true on average. However, we will require to control the length of trajectories not just on average, but in a pointwise sense. We establish this result in this section. In the quadratic case, the proof in \cite{Goldman2018} relies on the fact that $2$-monotonicity is equivalent to standard monotonicity. In our setting this is not available and we hence provide a different proof. Our proof heavily relies on the strong $p$-convexity of $c$.

\begin{lemma}\label{lem:linfty}
Let $1<p<\infty$.
Let $\pi$ be a coupling between two admissible measures $\lambda$ and $\mu$. Assume that $\mathrm{Spt}\,\pi $ is cyclically monotone with respect to $c$-cost and that ${E(4)+D(4)\ll 1}$.
Then for every $(x,y)\in \mathrm{Spt}\,\pi\cap \#_3$,
we have
\begin{align}\label{eq:main}
\lvert x-y\rvert\lesssim \left(E(4)+D(4)\right)^\frac 1 {p+d}.
\end{align}
As a consequence, for $(x,y)\in \mathrm{Spt}\,\pi$ and $t\in[0,1]$,
\begin{align}\label{eq:bound2}
x\in B_{3} \text{ or } y\in B_3 \Rightarrow (1-t)x+ty\in B_{4}.
\end{align}
\end{lemma}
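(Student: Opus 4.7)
The plan is to argue by contradiction. Suppose some $(x_0, y_0) \in \mathrm{Spt}\,\pi \cap \#_3$ has $\ell := |x_0 - y_0|$ much larger than $(E(4) + D(4))^{1/(p+d)}$. The symmetric case $y_0 \in B_3$ is treated identically, so I assume $x_0 \in B_3$; by the smallness of $E + D$ I may ensure $\ell \ll 1$ so that $B_{c\ell}(x_0) \subset B_4$ for some small dimensional constant $c > 0$.

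The heart of the argument is a pointwise bound: every $(x, y) \in \mathrm{Spt}\,\pi$ with $x \in B_{c\ell}(x_0)$ must itself satisfy $|x - y| \gtrsim \ell$. To produce this, I would combine $c$-cyclical monotonicity
\[
c(x_0 - y_0) + c(x - y) \;\le\; c(x_0 - y) + c(x - y_0)
\]
with strong $p$-convexity \eqref{ass:elliptic}. The clean observation is that the pairs $\{x_0 - y_0,\, x - y\}$ and $\{x_0 - y,\, x - y_0\}$ share the common midpoint $M = \tfrac{1}{2}((x_0+x) - (y_0+y))$, so applying \eqref{ass:elliptic} at $\tau = 1/2$ to each pair and inserting the monotonicity inequality upgrades it to
\[
c(x_0 - y) + c(x - y_0) \;\ge\; 2 c(M) + \tfrac{\Lambda^{-1}}{2}\, V_p(x_0 - y_0,\, x - y).
\]
When $|x-y| \le c\ell$, the right-hand side is $\gtrsim \ell^p$ (using $|M| \gtrsim \ell$ together with $|(x_0-y_0) - (x-y)| \gtrsim \ell$ to bound $V_p \gtrsim \ell^p$ via \eqref{ass:growth}), whereas the left-hand side is controlled by $c^p\ell^p + \ell^p$ through the $p$-growth \eqref{ass:growth} applied to $|x_0 - y| \le 2c\ell$ and $|x - y_0| \le (1+c)\ell$. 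Exploiting the strict excess supplied by the $V_p$-term and choosing $c = c(p,\Lambda)$ small enough should rule out the scenario $|x-y| \le c\ell$.

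With this pointwise bound, the data assumption $D(4) \ll 1$ forces $\kappa_{\lambda,4}$ close to $1$ and $\lambda\llcorner B_4$ close in $W_p$ to $\kappa_{\lambda,4}\,\d x\llcorner B_4$, whence $\lambda(B_{c\ell}(x_0)) \gtrsim \ell^d$ up to an additive error of order $D(4)$. Combining with \eqref{ass:growth} yields
\[
|B_4|\, E(4) \;\ge\; \int_{\{(x,y)\colon x \in B_{c\ell}(x_0)\}} c(x-y)\, \d\pi \;\gtrsim\; \ell^{p+d} - C\, D(4),
\]
which rearranges to $\ell \lesssim (E(4) + D(4))^{1/(p+d)}$, contradicting the assumption and establishing \eqref{eq:main}. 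The consequence \eqref{eq:bound2} then follows by the triangle inequality: for $x \in B_3$, $|(1-t)x + ty| \le 3 + |x-y| \le 3 + C\varepsilon^{1/(p+d)} < 4$ once $\varepsilon$ is small enough. The main obstacle is the quantitative step in the second paragraph: in the quadratic case, $2$-monotonicity collapses to $\langle x_0 - x,\, y_0 - y\rangle \le 0$, making the geometric consequence immediate; for $p \ne 2$ this algebraic shortcut is lost, and the $V_p$-contribution from strong $p$-convexity must be balanced delicately against the $p$-growth upper bound, with all constants depending on $p$ and $\Lambda$ tracked carefully so that the scale $c\ell$ can be chosen to close the argument.
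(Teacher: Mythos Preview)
Your overall strategy---show that a single long trajectory forces all nearby trajectories to be long, hence a lower bound on $E$---is natural, but the pointwise claim in your second paragraph is false, and the inequality you derive does \emph{not} rule out $|x-y|\le c\ell$. Take $c(z)=\tfrac12|z|^2$ (so $p=2$, $\Lambda=2$), $d=1$, $(x_0,y_0)=(0,1)$ and $(x,y)=(-\epsilon,-\epsilon)$. Two-point $c$-monotonicity holds since $\tfrac12\le \tfrac12+\epsilon+\epsilon^2$, yet $|x-x_0|=\epsilon$ is arbitrarily small while $|x-y|=0$. Tracing your displayed inequality in this example, both sides equal $\tfrac12$ at $\epsilon=0$: the $V_p$-contribution exactly compensates the gap between $c(a)+c(b)$ and $2c(M)$, so there is no ``strict excess'' to exploit. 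More generally, the comparison you propose---upper-bounding the left side via \eqref{ass:growth} and lower-bounding the right side via the reverse inequality in \eqref{ass:growth}---loses a factor $\Lambda^2$ that cannot be recovered by shrinking $c$; already at $c=0$ one would need $2^{1-p}\Lambda^{-1}+\tfrac12\Lambda^{-1}>\Lambda$, which fails for every $\Lambda\ge 1$.

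The paper circumvents this by reversing the roles. Rather than proving that all nearby trajectories are long, it first shows (Step~1) that in \emph{every} direction $\bm n$ there exists a short trajectory $(x',y')$ with $x'\in B_r(x+2r\bm n)$ and $c(x'-y')\lesssim E(4)/r^d$; this uses only the mass lower bound you correctly identified in your third paragraph. Each such barrier point, via $c$-monotonicity and a chain of estimates combining \eqref{ass:elliptic} and \eqref{ass:Cgrowth} (Step~2), confines $y$ to the complement of a cone with vertex near $x'$, axis $x'-x$, and aperture $\alpha=\alpha(p,\Lambda)$. Choosing finitely many directions covers $\R^d\setminus B_{Cr}(x)$ by such cones, forcing $|x-y|\lesssim r\sim (E+D)^{1/(p+d)}$. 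The one-sidedness of the counterexample above---trajectories starting on one side of $x_0$ can be short while those on the other side are long---is precisely why a directional cone argument succeeds where your isotropic ball argument cannot.
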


In the proof of Lemma \ref{lem:linfty} we require the following technical result, which we state independently as we will require it again in the future.
\begin{lemma}\label{lem:C2measures}
Let $1<p<\infty$ and $0< \alpha<1$.
For every $R>0$, $\xi\in C^{0,\alpha}(B_R)$ and $\mu$ supported in $B_R$ with $\mu(B_R)\sim \lvert B_R\rvert$,
\begin{align}\label{eq:taylor}
\left\lvert\int_{B_R}\xi(\d\mu-\kappa_{\mu,R}\d x)\right\rvert\leq& [\xi]_{C^{0,\alpha}(B_R)} W_{c}(\mu,\kappa_{\mu,R}\d x \llcorner B_R)^\frac \alpha p R^\frac{2d(p-\alpha)} p.
\end{align}
In case $\alpha =1$, \eqref{eq:taylor} holds with $C^{0,1}$ replaced by $C^1$. Further, if in addition we have ${\xi\in C^{\lfloor{p-1}\rfloor,p-\lfloor{p-1}\rfloor}(B_R)}$, there is $C>0$ such that 
\begin{align*}
\left \lvert \int_{B_R} \xi(\d\mu-\kappa_{\mu,R}\d x)\right\rvert \leq& C\sum_{i=1}^{\lfloor{p-1}\rfloor}\left(\kappa_{\mu,R}\int \lvert \D^i\xi\rvert^\frac{p}{p-i}\d x\right)^\frac{p-i} {p} W_c(\mu,\kappa_{\mu,R}\d x \llcorner B_R)^\frac i p\\
&\quad + [\xi]_{C^{\lfloor{p-1}\rfloor,p-\lfloor{p-1}\rfloor}} W_c(\mu,\kappa_{\mu,R}\ x \llcorner B_R).
\end{align*}
\end{lemma}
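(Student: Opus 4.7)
The plan is to represent the left-hand side via an optimal coupling $\pi\in\Pi(\mu,\kappa_{\mu,R}\d x\llcorner B_R)$ for $W_c$, which exists since both measures have total mass $\mu(B_R)$. Writing
\begin{align*}
\int_{B_R}\xi(\d\mu-\kappa_{\mu,R}\d x)=\int(\xi(x)-\xi(y))\,\d\pi(x,y),
\end{align*}
all estimates reduce to controlling the integrand via the regularity of $\xi$, then integrating against $\pi$, and finally exploiting $|x-y|^p\lesssim c(x-y)$ together with optimality to recognise the appearance of $W_c(\mu,\kappa_{\mu,R}\d x\llcorner B_R)$.

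For the first (H\"older) estimate I apply $|\xi(x)-\xi(y)|\leq[\xi]_{C^{0,\alpha}(B_R)}|x-y|^\alpha$ followed by H\"older's inequality with exponents $p/\alpha$ and $p/(p-\alpha)$ (legitimate since $\alpha\leq1<p$):
\begin{align*}
\int|x-y|^\alpha\d\pi\leq\Bigl(\int|x-y|^p\d\pi\Bigr)^{\alpha/p}\pi(\R^d\times\R^d)^{(p-\alpha)/p}.
\end{align*}
By \eqref{ass:growth} the first factor is $\lesssim W_c(\mu,\kappa_{\mu,R}\d x\llcorner B_R)^{\alpha/p}$, and $\pi(\R^d\times\R^d)=\mu(B_R)\sim R^d$, which yields the claimed estimate. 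The case $\alpha=1$ with $\xi\in C^1$ is identical, using the Lipschitz bound supplied by $C^1$ regularity on the convex domain $B_R$.

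For the higher-order refinement, with $k=\lfloor p-1\rfloor$, I would Taylor-expand $\xi$ \emph{around the Lebesgue-side variable $y$},
\begin{align*}
\xi(x)-\xi(y)=\sum_{i=1}^{k}\frac{1}{i!}\D^i\xi(y)\bigl[(x-y)^{\otimes i}\bigr]+R(x,y),
\end{align*}
with $|R(x,y)|\lesssim[\D^k\xi]_{C^{0,p-k}(B_R)}|x-y|^p$. Integrating, each $i$-th order term is bounded via H\"older's inequality with exponents $p/(p-i)$ and $p/i$ by
\begin{align*}
\Bigl(\int|\D^i\xi(y)|^{p/(p-i)}\d\pi\Bigr)^{(p-i)/p}\Bigl(\int|x-y|^p\d\pi\Bigr)^{i/p}.
\end{align*}
Since the $y$-marginal of $\pi$ is exactly $\kappa_{\mu,R}\d x\llcorner B_R$, the first factor equals $\bigl(\kappa_{\mu,R}\int_{B_R}|\D^i\xi|^{p/(p-i)}\d x\bigr)^{(p-i)/p}$, while the second is $\lesssim W_c(\mu,\kappa_{\mu,R}\d x\llcorner B_R)^{i/p}$; the remainder contributes the last term. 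The main obstacle is precisely this choice of expansion point: expanding around $x$ would yield integrals of $|\D^i\xi|^{p/(p-i)}$ against the possibly rough measure $\mu$, which carry no useful control, whereas expanding around $y$ lands on Lebesgue measure (up to the constant $\kappa_{\mu,R}$) exactly as the statement demands.
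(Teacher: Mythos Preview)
Your proposal is correct and follows essentially the same argument as the paper: represent the difference via an optimal coupling, use the H\"older (respectively Taylor) bound on $\xi(x)-\xi(y)$, and apply H\"older's inequality together with \eqref{ass:growth} to produce the $W_c$ factors. In particular, your observation that the Taylor expansion must be centred at the Lebesgue-side variable $y$ so that the $y$-marginal converts $\int|\D^i\xi(y)|^{p/(p-i)}\d\pi$ into $\kappa_{\mu,R}\int_{B_R}|\D^i\xi|^{p/(p-i)}\d x$ is exactly the mechanism the paper uses.
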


\begin{proof}
Integrate the estimate
$$
|\xi(x)-\xi(y)|\leq [\xi]_{C^{0,\alpha}}|x-y|^\alpha
$$
against an optimal transport plan $\pi$ between $\mu$ and $\kappa_{\mu,R}\d x\llcorner B_R$ to find, 
\begin{align*}
\left|\int_{B_R}\xi(\d\mu-\kappa_\mu\d x)\right|\leq& [\xi]_{C^{0,\alpha}(B_R)} \int_{B_R}|x-y|^\alpha \d \pi.
\end{align*}
Applying H\"older and using \eqref{ass:growth} the result follows.

To obtain the second estimate, we proceed similarly, but start with the estimate
\begin{align*}
\lvert\xi(x)-\xi(y)-\sum_{\lvert \alpha\rvert=1}^{\lfloor{p-1}\rfloor}\D^\alpha \xi(y)\frac{(x-y)^\alpha}{\lvert \alpha\rvert!}\rvert \leq [\xi]_{C^{\lfloor{p-1}\rfloor,p-\lfloor{p-1}\rfloor}(B_R)} \lvert x-y\rvert^{p}.
\end{align*}
The result follows using \eqref{ass:growth} and using H\"older to estimate
\begin{align*}
\int \lvert \D^\alpha \xi(y)\rvert \lvert x-y\rvert^{\lvert \alpha\rvert} \d\pi \leq \left(\kappa_{\mu,R}\int \lvert \D^{\lvert \alpha\rvert}\xi\rvert^\frac{p}{p-\lvert \alpha\rvert}\d x\right)^\frac{p-\lvert \alpha\rvert} p W_c(\mu,\kappa_{\mu,R}\d x \llcorner B_R)^\frac {\lvert \alpha\rvert} p.
\end{align*}
\end{proof}

We proceed to prove Lemma \ref{lem:linfty}.
\begin{proof}[Proof of Lemma \ref{lem:linfty}]
Fix $(x,y)\in \mathrm{Spt}\,\pi\cap \#_3$.  Without loss of generality we may assume that $(x,y)\in B_{3}\times \R^d$.

\textbf{Step 1. Barrier points exist in all directions:} In this step we show that in all directions we may find points $(x',y')\in \mathrm{Spt}\,\pi$ with $x'\approx y'$. To be precise, consider an arbitrary unit vector $\bm n\in \R^d$ and let $r>0$. We show that for any $\bm n$, and all $r\ll 1$, there is $M=M(p,d,\Lambda)>0$ and $(x',y')\in \mathrm{Spt}\,\pi\cap (B_r(x+2r \bm n)\times \R^d)$ such that
\begin{align*}
c(x'-y')\leq \frac {ME(4)} {r^d}.
\end{align*}

Assume, for contradiction, that for any $M>0$, there is $\bm n\in \R^d$ and $r>0$ such that for all $(x',y')\in \mathrm{Spt}\,\pi\cap (B_r(x+2r \bm n)\times \R^d)$, $|x'-y'|\geq \frac {M E(4)}{r^d}$. 
 Let $\eta$ be a non-negative, smooth cut-off supported in $B_r(x+2r \bm n)$ satisfying 
 $$\sum_{i=1}^{\lfloor{p-1}\rfloor} r^i\sup|\D^i\eta|+r^{p}[\eta]_{C^{\lfloor{p-1}\rfloor,p-\lfloor{p-1}\rfloor}}\lesssim 1.
 $$
 Then
\begin{align*}
E(4)\gtrsim \int \int \eta(x)c(x-y)\d\pi(x,y)\geq \int \int \frac{M E(4)}{r^d}\eta(x)\d\pi(x,y)= \frac{M E(4)}{r^d}\int \eta(x) \d\mu(x).
\end{align*}
However, due to Lemma \ref{lem:C2measures} and noting $\kappa_{\mu,4}\sim 1$,
\begin{align*}
\left|\int \eta\d\mu(x)-\kappa_{\mu,4}\int \eta\d x\right|\lesssim \sum_{i=1}^{\lfloor{p-1}\rfloor} r^{\frac {d(p-i)}{p}-i}D(4)^\frac i p+r^{-p} D(4).
\end{align*}
Normalising $\eta$ such that $\int_{B_r(x+2r n)}\eta\d x\sim r^d$, we can guarantee $\kappa_{\mu,4}\int \eta\d x\sim \kappa_{\mu,4} r^d\sim r^d$.
 Ensuring $D(4)\ll r^{p+d}$, so that $\sum_{i=1}^{\lfloor{p-1}\rfloor} r^{\frac {d(p-i)}{p}-i}D(4)^\frac i p+r^{-p} D(4) \ll r^d$, we may thus conclude
\begin{align*}
E(4)\gtrsim \frac{M E(4)}{r^d} r^d = M E(4).
\end{align*}
As $M$ was arbitrary, this is a contradiction.

\textbf{Step 2. Building barriers:} In this step, we show that if we are given points $${(x',y')\in \mathrm{Spt}\,\pi\cap (B_r(x+2r \bm n)\times \R^d)}$$ such that $|x'-y'|\leq \frac {ME(4)} {r^d}$ for some $M=M(p,d,\Lambda)>0$, then there is a cone $C_{x,x'}$ with vertex $x'+r\rho(x'-x)$ for some $\rho=\rho(p,d,\Lambda)>0$, aperture $\alpha=\alpha(p,d,\Lambda)$ and axis $x'-x$ such that $y\not\in C_{x,x'}$.

Without loss of generality, we may assume that $x'-x$ points in the $e_n$ direction. Moreover, considering the cost $c(\cdot)-c(x)$, we may assume that $c(x)=0$. 
Suppose for a contradiction that
$$
y\in C_{x,x'}=x'+\{a\in \R^{d-1}\times \R^+\colon d(a,\Gamma)\leq \alpha (|\overline a-x'|-r \rho)\}
$$
for some $\alpha,\rho>0$ to be determined. Here $\Gamma= \{t(x'-x)\colon t\geq 0\}$ and $\overline a$ denotes the orthogonal projection of a point $a\in \R^{d-1}\times \R^+$ onto $\Gamma$. We want to show that then
\begin{align}\label{eq:contraMonoton}
c(x-y)\geq c(x'-y)+c(x-y').
\end{align}
\eqref{eq:contraMonoton} is a contradiction to the $c$-monotonicity of $\pi$ and hence proves the stated claim.

We note that we may assume $x=0$. Indeed, setting $z = y-x$, $z'=y'-x$ and $\tilde z=x'-x$, \eqref{eq:contraMonoton} becomes
\begin{align*}
c(-z)\geq c(\tilde z-z)+c(-z')
\end{align*}
with $|\tilde z|\leq \frac{M E}{r^d}$ and $z\in C_{0,\tilde z}$, which we recognise as precisely the situation we are in if $x=0$.

\definecolor{uququq}{rgb}{0.25,0.25,0.25}
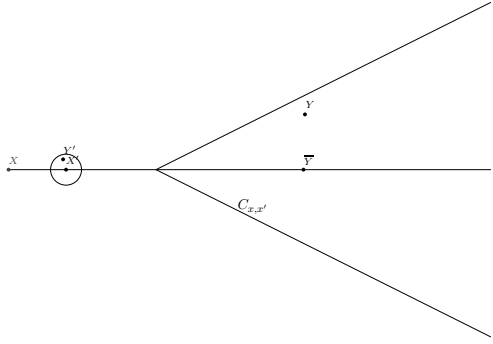
\begin{figure}[ht]
\centering
\resizebox{0.5\textwidth}{!}{
\begin{tikzpicture}[line cap=round,line join=round,>=triangle 45,x=1.0cm,y=1.0cm]
\clip(-1.66,-5.3) rectangle (13.08,7.48);
\draw [domain=4.0:13.080000000000005] plot(\x,{(-4--1*\x)/2});
\draw [domain=4.0:13.080000000000005] plot(\x,{(--4-1*\x)/2});
\draw(1.56,0) circle (0.42cm);
\draw (6.08,-0.68) node[anchor=north west] {$C_{x,x'}$};
\draw [domain=0.0:13.080000000000005] plot(\x,{(-0-0*\x)/8});
\begin{scriptsize}
\fill [color=uququq] (0,0) circle (1.5pt);
\draw[color=uququq] (0.14,0.26) node {$X$};
\fill [color=black] (8.04,1.5) circle (1.5pt);
\draw[color=black] (8.18,1.76) node {$Y$};
\fill [color=black] (8,0) circle (1.5pt);
\draw[color=black] (8.14,0.26) node {$\overline{Y}$};
\fill [color=black] (1.56,0) circle (1.5pt);
\draw[color=black] (1.74,0.26) node {$X'$};
\fill [color=black] (1.48,0.28) circle (1.5pt);
\draw[color=black] (1.66,0.54) node {$Y'$};
\end{scriptsize}
\end{tikzpicture}
}
\caption{Geometric situation in Step 2.}
\end{figure}

Taking $\rho\geq 4$, we then estimate using the growth assumption \eqref{ass:Cgrowth} and the strict convexity assumption \eqref{ass:elliptic}
\begin{align*}
c(-y)\geq& c(-\overline y)+c(x'-y')-\Lambda U(-y,-\overline y)\\
\geq& \frac{|\overline y|}{|-\overline y+x'|}c(-\overline y+x')+\frac{\lambda|x'|}{|\overline y|}V(-\overline y,0)-\Lambda U(-y,-\overline y)\\
\geq& c(-\overline y+x')+c(x')+\frac{\lambda|x'|}{|\overline y|}V(-\overline y,0)+\frac{\lambda|\overline y-2x'|}{|\overline y-x'|}V(-\overline y+x',0)-\Lambda U(-y,-\overline y)\\
\geq& c(-y+x')+c(-x')-\Lambda U(-y+x',-\overline y+x')+\frac{\lambda|x'|}{|\overline y|}V(-\overline y,0)\\
&\quad+\frac{\lambda|\overline y-2x'|}{|\overline y-x'|}V(-\overline y+x',0)-\Lambda U(-y,-\overline y)\\
\geq& c(-y+x')+c(-y')-\Lambda U(-y+x',-\overline y+x')+\frac{\lambda|x'|}{|\overline y|}V(-\overline y,0)\\
&\quad+\frac{\lambda|\overline y-2x'|}{|\overline y-x'|}V(-\overline y+x',0)-\Lambda U(-y,-\overline y)-\Lambda U(-x',-y')\\
\end{align*}

In particular, it suffices to show
\begin{align}\label{eq:sufficient}
&c(x'-y')+ \frac{\lambda|x'|}{|\overline y|}V(-\overline y,0)+\frac{\lambda|\overline y-2x'|}{|\overline y-x'|}V(-\overline y+x',0)\nonumber\\
\geq& \Lambda U(-y,-\overline y)+\Lambda U(-y+x',-\overline y+x').
\end{align}
We note that,  if $\rho\geq 8$, $|\overline y-2x'|\geq \frac 1 2|\overline y|$. Then we can estimate
\begin{align*}
\frac{\lambda|x'|}{|\overline y|}V(-\overline y,0)+\frac{\lambda|\overline y-2x'|}{|\overline y-x'|}V(-\overline y+x',0)
=& \lambda(|x'| |\overline y|^{p-1}+|\overline y-2 x'| |\overline y-x'|^{p-1})\\
\gtrsim &  |\overline y|^{p}.
\end{align*}
Further, if $E(4)\leq \e r^{d+1}$,
\begin{align*}
&\Lambda U(-y,-\overline y)+\Lambda U(-y+x',-\overline y+x')+\Lambda U(-x',-y')\\
\leq& 2\Lambda (2|y|)^{p-1}|y-\overline y|+\Lambda \left(|x'|+|y'|\right)^{p-1} |x'-y'|\\
\lesssim& |y|^{p-1}\alpha |\overline y-x'|+\frac{M E}{r^d} (2r)^{p-1}\\
\lesssim& \alpha|\overline y|^p+\e |\overline y|^p .
\end{align*}
Thus choosing $\alpha$, $\e>0$, sufficiently small, we find \eqref{eq:sufficient} holds, proving our claim.

\textbf{Step 3. Proving the $L^\infty$-bounds:}
Choose $r=c (E(4)+D(4))^{\frac 1 {p+d}}$. For sufficiently large choice of $c>0$ and selecting $c(d)$ directions $n_i$, by Step 1 and Step 2 we obtain points 
$${(x',y')\in \mathrm{Spt}\,\pi\cap (B_r(x+2 r n_i)\times \R^d)}$$
and cones $(C_{x,x_i'})_{i\leq c(d)}$ with vertices $x_i'+r\rho(x_i'-x)$, aperture $\alpha$ and axis $x_i'-x$ such that for some $c(\alpha)>0$,
$${y\not\in \cup C_{y_i}} \text{ and }\R^d\setminus B_{(\rho+c(\alpha))r}(x)\subset\cup C_{y_i}.
$$ In particular, we have
\begin{align*}
|y-x|\leq (\rho+c(\alpha))r\lesssim (E+D)^\frac 1 {p+d},
\end{align*}
that is \eqref{eq:main}.

\eqref{eq:bound2} is a direct consequence of \eqref{eq:main}, concluding the proof.
\end{proof}

We record two consequences of Lemma \ref{lem:linfty} we will use later.
\begin{corollary}\label{cor:linfty}
Under the assumptions of Lemma \ref{lem:linfty}, it holds that
\begin{gather*}
\int_2^3 \int_{\Omega \cap \{\exists t\in[0,1]\colon X(t)\in \p B_R)\}} c(x-y)\d \pi\d R\lesssim (E(4)+D(4))^{1+\frac 1 {p+d}},\\
\int_2^3 \int_{\Omega} I(\{\exists t\in[0,1]\colon X(t)\in \p B_R)\})\d \pi\d R \lesssim (E(4)+D(4))^\frac 1 {p+d}.
\end{gather*}
\end{corollary}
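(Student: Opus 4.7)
The proof proceeds by Fubini and a simple geometric observation: for any trajectory $X(t) = (1-t)x+ty$ the function $t \mapsto \lvert X(t)\rvert$ is $\lvert x-y\rvert$-Lipschitz on $[0,1]$, so its image is an interval of length at most $\lvert x-y\rvert$. Consequently
\[
\int_2^3 I\bigl(\exists\, t\in[0,1]\colon X(t)\in \p B_R\bigr)\,\d R \leq \lvert x-y\rvert.
\]
This is the only nontrivial input beyond the pointwise bound of Lemma \ref{lem:linfty}.

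For the second estimate, switch the order of integration and apply the observation above:
\[
\int_2^3\!\!\int_{\Omega} I\bigl(\exists\, t\colon X(t)\in \p B_R\bigr)\,\d\pi\,\d R \leq \int_{\#_3} \lvert x-y\rvert\,\d\pi.
\]
Since $(x,y)\in \mathrm{Spt}\,\pi\cap \#_3$, Lemma \ref{lem:linfty} gives $\lvert x-y\rvert \lesssim (E(4)+D(4))^{1/(p+d)}$ uniformly. Combined with $\pi(\#_3)\leq \lambda(B_3)+\mu(B_3)\lesssim 1$ (since $D(4)\ll 1$ forces $\kappa_{\lambda,4},\kappa_{\mu,4}\sim 1$), this yields the claimed bound.

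For the first estimate, the same Fubini step gives
\[
\int_2^3\!\!\int_{\Omega \cap \{\exists\, t\colon X(t)\in \p B_R\}} c(x-y)\,\d\pi\,\d R \leq \int_{\#_3} c(x-y)\,\lvert x-y\rvert\,\d\pi.
\]
Pulling out the $L^\infty$-bound $\lvert x-y\rvert\lesssim (E(4)+D(4))^{1/(p+d)}$ and using the definition of $E(4)$ (together with $\#_3\subset \#_4$ and $\lvert B_4\rvert\lesssim 1$) to bound the remaining integral by $E(4)\lesssim E(4)+D(4)$ concludes the proof.

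I expect no real obstacle: both bounds are direct corollaries of Lemma \ref{lem:linfty} once the Lipschitz estimate on $t\mapsto \lvert X(t)\rvert$ is noted. The only small point requiring care is confirming that $\pi(\#_3)$ is bounded, which follows from $D(4)\ll 1$.
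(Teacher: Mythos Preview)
Your proof is correct and follows essentially the same idea as the paper: both exploit that the image of $t\mapsto\lvert X(t)\rvert$ is an interval of length at most $\lvert x-y\rvert$, then combine Fubini with the pointwise bound of Lemma~\ref{lem:linfty}. The only cosmetic difference is the order of operations---the paper first invokes the $L^\infty$-bound to replace the indicator by $I\bigl(\lvert\lvert x\rvert-R\rvert\leq C(E+D)^{1/(p+d)}\bigr)$ and then integrates in $R$, whereas you integrate in $R$ first to obtain the factor $\lvert x-y\rvert$ and apply the $L^\infty$-bound afterwards; your version is slightly more streamlined.
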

\begin{proof}
We use Lemma \ref{lem:linfty} to deduce there is $C>0$ such that
\begin{align*}
&\int_2^3 \int_{\Omega \cap \{\exists t\in[0,1]\colon X(t)\in \p B_R)} c(x-y)\d \pi\d R\\
\leq& \int_2^3 \int_{(B_{7/2}\setminus B_{3/2})\times (B_{7/2}-B_{3/2})}I(\{\lvert \lvert x\rvert-R\rvert\leq C(E(4)+D(4))^\frac 1 {p+d}\}) c(x-y)\d \pi\d R\\
\lesssim& (E(4)+D(4))^\frac 1 {p+d} \int_{\#_4} c(x-y)\d \pi\\
\lesssim& (E(4)+D(4))^{1+\frac 1 {p+d}}.
\end{align*}

Further, again using Lemma \ref{lem:linfty}, there is $C>0$ such that,
\begin{align*}
&\int_2^3 \int_\Omega I(\{\exists t\in[0,1]\colon X(t)\in \p B_R\})\d \pi\d R\\
\leq& \int_2^3 \pi(\Omega \cap \{\lvert X(0)-R\rvert \leq C(E(4)+D(4))^\frac 1 {p+d}\})\d R\\
=& \int_2^3 \mu(\{\lvert \lvert x\rvert-R\rvert \leq C(E(4)+D(4))^\frac 1 {p+d})\d R\\
\leq& \int_{B_{7/2}\setminus B_{3/2}\times B_{7/2}\setminus B_{3/2}} \int I(\lvert \lvert x\rvert -R\rvert \leq C(E(4)+D(4))^\frac 1 {p+d})\d R\d \mu\\
\lesssim& (E(4)+D(4))^\frac 1 {p+d} \mu(B_4)\\
\lesssim& (E(4)+D(4))^\frac 1 {p+d}.
\end{align*}

\end{proof}

\section{A localisation result}\label{sec:localisation}
In order to prove Theorem \ref{thm:main}, we need to use optimality in a localised way, as the quantity we need to estimate is a local quantity. In general, given a minimiser $\pi$ of optimal transport with cost function $c$ between two measures $\lambda$ and $\mu$, it is not true that the localised transport cost of $\pi$ is approximately equal to the optimal transport cost between the localised measures $\lambda \llcorner B_R$ and $\mu\llcorner B_R$. In other words, it is in general not the case that
\begin{align*}
\int_{\Omega} c(x-y)\d\pi\approx W_c(\lambda\llcorner B_R,\mu\llcorner B_R).
\end{align*}
However, if we take into account the entry points of trajectories entering $B_R$ (which we denoted $f_R$, c.f. \eqref{eq:2}) and the exit points of trajectories exiting $B_R$ (which we denoted $g_R$, c.f. \eqref{eq:2}), the values are close as we show in the next lemma.
\begin{lemma}\label{lem:localisation}
Let $\lambda,\mu$ be admissible measures. Suppose $\pi\in \Pi(\lambda,\mu)$ minimises \eqref{eq:problem}. Let $R\in[2,3]$ and define $f_R,g_R$ as in \eqref{eq:2}. Then for any $\tau,\delta>0$, there is $\e>0$ such that if $E(4)+D(4)\leq \e$, then
\begin{align*}
\int_\Omega c(x-y)\d \pi\leq (1+\delta) W_c(\lambda \llcorner B_R+f_R,\mu\llcorner B_R+ g_R)+\tau \left(E(4) + D(4)\right)
\end{align*}
\end{lemma}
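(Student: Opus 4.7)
The plan is to combine the restriction optimality of $\pi$ with the triangle-type inequality for $W_c$ (Lemma \ref{lem:triangleInequality}), and to estimate the resulting boundary-correction terms using the $L^\infty$-bound on displacements from Lemma \ref{lem:linfty}.

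First, since $\mathrm{Spt}\,\pi$ is $c$-cyclically monotone, so is $\mathrm{Spt}(\pi\llcorner\Omega)$; hence $\pi\llcorner\Omega$ is itself an optimal coupling of its marginals $\lambda^{\mathrm{in}}:=(\pi\llcorner\Omega)_1$ and $\mu^{\mathrm{in}}:=(\pi\llcorner\Omega)_2$, which gives $\int_\Omega c(x-y)\d\pi=W_c(\lambda^{\mathrm{in}},\mu^{\mathrm{in}})$. Applying Lemma \ref{lem:triangleInequality} twice, inserting the intermediate measures $\tilde\lambda:=\lambda\llcorner B_R+f_R$ and $\tilde\mu:=\mu\llcorner B_R+g_R$, one obtains
\[
W_c(\lambda^{\mathrm{in}},\mu^{\mathrm{in}}) \leq (1+\delta)W_c(\tilde\lambda,\tilde\mu)+C(\delta)\bigl[W_c(\lambda^{\mathrm{in}},\tilde\lambda)+W_c(\tilde\mu,\mu^{\mathrm{in}})\bigr].
\]

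The two residual transport costs admit explicit upper bounds via a ``cut at $\partial B_R$'' coupling: push forward $\pi\llcorner\Omega$ under the map that leaves $x\in B_R$ (resp.\ $y\in B_R$) untouched and replaces each $x\notin B_R$ (resp.\ $y\notin B_R$) by the corresponding trajectory-crossing point on $\partial B_R$. Since $x-X(\tau)$ is a fraction of $x-y$, the cost of the first coupling is controlled, up to a $\Lambda$-dependent factor coming from \eqref{ass:growth}, by $\int_{\Omega\cap\{x\notin B_R\}}c(x-y)\d\pi$; the same bound holds on the $\mu$-side.

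The main obstacle is thus to show that $\int_{\Omega\cap\{x\notin B_R\}}c(x-y)\d\pi\lesssim(E(4)+D(4))^{1+\eta}$ for some $\eta=\eta(p,d)>0$, uniformly in $R\in[2,3]$. By Lemma \ref{lem:linfty}, such $x$'s lie in an annulus of width $\lesssim(E(4)+D(4))^{1/(p+d)}$ around $\partial B_R$, and Lemma \ref{lem:C2measures}, combined with the $W_p$-closeness of $\lambda$ to Lebesgue (encoded in $D(4)$), bounds the $\lambda$-mass of that annulus. A crude product of the pointwise $L^\infty$-cost $\lesssim(E(4)+D(4))^{p/(p+d)}$ with this mass gives only an $(E(4)+D(4))^{(p+1)/(p+d)}$ bound, which fails to beat $(E(4)+D(4))$ once $d\geq 2$; the remedy is a H\"older interpolation between the pointwise bound on $|x-y|$ from Lemma \ref{lem:linfty} and the integral control $\int_{\#_4}|x-y|^p\d\pi\lesssim E(4)$. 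A suitable choice of H\"older exponent yields the desired super-linear bound, whence the conclusion follows for $E(4)+D(4)\leq\e(\tau,\delta)$ sufficiently small, since then $C(\delta)(E(4)+D(4))^\eta\leq\tau$.
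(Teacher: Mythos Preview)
Your reduction via restriction optimality plus the triangle-type inequality is valid and arguably cleaner than the paper's route: the paper instead builds a \emph{global} competitor $\tilde\pi$ to $\pi$ by gluing an optimal plan for $W_c(\lambda\llcorner B_R+f_R,\mu\llcorner B_R+g_R)$ to $\pi\llcorner\Omega^c$ through disintegration measures $\lambda_z,\mu_w$ on $\partial B_R$, and then invokes the global minimality of $\pi$. Both arguments land on the same boundary-crossing error term $\int_{\Omega\cap\{X\text{ touches }\partial B_R\}}c(x-y)\,\d\pi$ (up to the factor coming from \eqref{ass:growth}).

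The genuine gap is your claim that ``a suitable choice of H\"older exponent yields the desired super-linear bound'' for this error term \emph{uniformly} in $R\in[2,3]$. No such interpolation works once $d\geq 2$: splitting $|x-y|^p=|x-y|^\theta\cdot|x-y|^{p-\theta}$, bounding the first factor by the $L^\infty$-estimate of Lemma~\ref{lem:linfty} and the second by H\"older against $\int_{\#_4}|x-y|^p\,\d\pi\lesssim E(4)$, produces an exponent
\[
\frac{\theta}{p+d}+\frac{p-\theta}{p}=1-\frac{\theta d}{p(p+d)}\leq 1.
\]
Incorporating your annulus-mass bound $\pi(A)\lesssim(E+D)^{1/(p+d)}$ does not rescue this either (it just reproduces the crude product you already discarded). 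The paper does not attempt a pointwise-in-$R$ bound; it appeals to Corollary~\ref{cor:linfty}, whose super-linear estimate $(E(4)+D(4))^{1+1/(p+d)}$ is obtained only after \emph{integrating} over $R\in[2,3]$. The lemma as literally stated (for every $R$) is therefore slightly optimistic in the paper as well; the correct reading is that the conclusion holds for a set of radii of positive measure, which is all that is needed since a good $R$ is selected downstream (see the choice of $R$ at the beginning of Section~\ref{sec:proof}). Your argument is repaired by the same device: average the boundary term in $R$ and pick $R$ accordingly.
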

\begin{proof}
Introduce the weakly continuous family of probability measures $\{\lambda_z\}_{z\in \p B_R}$ such that
\begin{align*}
\int_{\Omega \cap \{X(\sigma)\in \p B_R\}} \zeta(x,X(\sigma))\pi(\d x\d y) = \int_{\p B_R}\int \zeta(x,z)\lambda_z(\d x)f_R(\d z)
\end{align*}
for any test function $\zeta$ on $\R^d\times \R^d$. Likewise, introduce $\{\mu_w\}_{w\in \p B_R}$ via
\begin{align*}
\int_{\Omega \cap \{X(\tau)\in \p B_R\}}\zeta(X(\tau),y)\pi(\d x\d y) = \int_{\p B_R}\int \zeta(w,y)\mu_w(\d y)g_R(\d w).
\end{align*}
Let $\overline\pi$ be an optimal plan for $W_c(\lambda\llcorner B_R+f_R,\mu\llcorner B_R+g_R)$. Define a competitor $\tilde \pi$ for $\pi$ by requiring the following formula to hold for any test function $\zeta$ on $\R^d\times \R^d$,
\begin{align}\label{eq:tildepi}
&\int \zeta(x,y)\tilde\pi(\d x \d y) \nonumber\\
=& \int_{\Omega^c}\zeta(x,y)\d \pi(x,y) + \int_{B_R\times B_R} \zeta(x,y)\overline\pi(\d x\d y)+ \int_{\p B_R\times B_R}\int \zeta(x,y)\lambda_z(\d x)\overline\pi(\d z \d y)\nonumber\\
&+ \int_{B_R\times \p B_R}\zeta(x,y)\mu_w(\d y)\overline\pi(\d x \d w) + \int_{\p B_R\times \p B_R} \int\int \zeta(x,y)\mu_w(\d y)\lambda_z(\d x)\overline\pi(\d z \d w)\nonumber\\
=& I + II + III + IV + V.
\end{align}

In order to see that $\tilde \pi\in \Pi(\lambda,\mu)$, by symmetry it suffices to check that the first marginal is $\lambda$. Hence test \eqref{eq:tildepi} against $\zeta(x)$. We begin by noting that due to the definition of $\mu_w$ and using that $\overline\pi$ is supported in $\overline B_R$,
\begin{align*}
II + IV  = \int_{B_R\times \R^d}\zeta(x)\overline\pi(\d x\d y) = \int_{B_R}\zeta(x)\mu(\d x) = \int_{\Omega \cap \{X(\sigma)\in B_R)\}} \zeta(x)\pi(\d x \d y).
\end{align*}
Similarly, using also the definition of $f_R$,
\begin{align*}
III + V =& \int_{\p B_R\times \R^d} \int \zeta(x)\lambda_z(\d x)\overline\pi(\d z \d y) = \int_{\p B_R} \zeta(z)f_R(\d z) \\
=& \int_{\Omega \cap \{X(\sigma)\in \p B_R\}} \zeta(x)\pi(\d x \d y).
\end{align*}
In particular, we have shown
\begin{align*}
\int \zeta(x)\tilde \pi(\d x \d y) = \int \zeta(x)\pi(\d x\d y) = \int \zeta(x)\lambda(\d x)
\end{align*}
as desired.

Using optimality of $\pi$ in the form
\begin{align*}
\int_{\Omega} c(x-y)\d \pi + \int_{\Omega^c} c(x-y)\d \pi \leq \int c(x-y)\d\tilde\pi
\end{align*}
and testing \eqref{eq:tildepi} against $\zeta(x,y)= c(x-y)$, we learn
\begin{align*}
&\int_\Omega c(x-y)\d\pi \\
\leq& \int_{B_R\times B_R} c(x-y)\overline\pi(\d x \d y) + \int_{\p B_R\times B_R} \int c(x-y)\lambda_z(\d x)\overline\pi(\d z \d y) \\
&\quad +\int_{B_R\times \p B_R} c(x-y)\mu_w(\d y)\overline\pi(\d x \d w) \\
&\qquad+ \int_{\p B_R\times \p B_R} \int \int c(x-y)\mu_w(\d y)\lambda_z(\d x) \overline\pi(\d z \d w)\\
=& \int_{B_R\times B_R} f_1 \overline\pi(\d x \dy ) + \int_{\p B_R\times B_R} f_2 \overline\pi(\d z\d y) + \int_{B_R\times \p B_R} f_3 \overline \pi(\d x \d w) \\
&\quad+ \int_{\p B_R\times \p B_R} f_4 \overline \pi(\d z \d w)
\end{align*}
As in the proof of Lemma \ref{lem:triangleInequality}, for any $\delta>0$, there is $C_\delta>0$ such that for any $x,y,z$,
\begin{align*}
c(x-z)\leq (1+\delta) c(x-y) + C_\delta c(y-z).
\end{align*}
Using this in combination with the fact that $\lambda_z$, $\mu_w$ are probability measures we deduce
\begin{align*}
f_2 \leq& (1+\delta)c(z-y) + C(\delta)\tilde f_2, \quad f_3 \leq (1+\delta)c(x-w) + C(\delta)\tilde f_3\\
f_4\leq& (1+\delta)c(z-w)+C(\delta) \tilde f_4,
\end{align*}
where
\begin{gather*}
\tilde f_2(z,y) = \int c(x-z)\lambda_z(\d x),\quad \tilde f_3(x,w) = \int c(w-y)\mu_w(\d y),\\
 \tilde f_4(z,w) = \tilde f_2(z,y) + f_3(x,w).
\end{gather*}
In particular, we deduce
\begin{align*}
&\int_\Omega c(x-y)\d \pi \\
\leq& (1+\delta)\int_{\overline B_R\times \overline B_R}c(x-y)\overline\pi(\d x \d y)+ C(\delta)\int_{\p B_R\times B_R} \tilde f_2 \overline\pi(\d z\d y) \\
&\quad+ C(\delta) \int_{B_R\times \p B_R} \tilde f_3\overline\pi(\d x \d w)+ C(\delta) \int_{\p B_R\times \p B_R} \tilde f_4\overline\pi(\d z \d w)\\
=& (1+\delta)\int_{\overline B_R\times \overline B_R}c(x-y)\overline\pi(\d x \d y) + 2 C(\delta)\int_{\p B_R\times \R^d} \int c(x-z)\lambda_z(\d x)\overline\pi(\d z\d y) \\
&\quad + 2C(\delta)\int_{\R^d\times \p B_R} c(w-y)\mu_y(\d y)\overline\pi(\d x \d w)\\
=& (1+\delta)\int_{\overline B_R\times \overline B_R}c(x-y)\overline\pi(\d x \d y) + 2 C(\delta)\int_{\p B_R\times \R^d} \int c(x-z)\lambda_z(\d x)f_R(\d z) \\
&\quad + 2C(\delta)\int_{\R^d\times \p B_R} c(w-y)\mu_y(\d y)g_R(\d w)\\
=& (1+\delta)\int_{\overline B_R\times \overline B_R}c(x-y)\overline\pi(\d x \d y) + 2 C(\delta)\int_{\Omega \cap \{X(\sigma)\in \p B_R\}} c(x-X(\sigma))\d \pi \\
&\quad + 2C(\delta)\int_{\Omega \cap \{ X(\tau) \in \p B_R\}} c(X(\tau)-y)\d \pi\\
\end{align*}
In order to obtain the second to last line we used the admissibility of $\overline\pi$. Now note on the one hand, that due to optimality of $\overline \pi$,
\begin{align*}
\int_{\overline B_R\times \overline B_R} c(x-y)\d\overline\pi = W_c(\lambda \llcorner B_R + f_R,\mu\llcorner B_R + g_R).
\end{align*}
On the other hand, on $\Omega \cap \{X(\sigma)\in \p B_R)\} \cap \{X(\tau)\in \p B_R\}$, for some $\rho_1, \rho_2\geq 0$ with $\rho_1 + \rho_2\leq 1$, due to convexity of $c$ and $c(0)=0$,
\begin{align*}
c(x-X(\sigma)) + c(X(\tau)-y) = c(\rho_1(x-y)) + c(\rho_2(x-y)) \leq c(x-y).
\end{align*}
Thus, we have shown
\begin{align*}
&\int_\Omega c(x-y)\d\pi\\
\leq& (1+\delta) W_c(\lambda \llcorner B_R + f_R,\mu\llcorner B_R+g_R)+C(\delta) \int_{\Omega \cap (\{X(\sigma\in \p B_R\}\cup \{X(\tau)\in \p B_R\})} c(x-y)\d \pi\\
=& (1+\delta) W_c(\lambda \llcorner B_R + f_R,\mu\llcorner B_R+g_R)+C(\delta) \int_{\Omega \cap \{\exists t\in[0,1]\colon X(t)\in \p B_R\}} c(x-y)\d \pi\\
\leq& (1+\delta) W_c(\lambda \llcorner B_R + f_R,\mu\llcorner B_R+g_R)+C(\delta) (E(4)+D(4))^{1+\frac 1 {p+d}}.
\end{align*}
To obtain the last line, we used Corollary \ref{cor:linfty}. Choosing $\e$ sufficiently small the result follows.
\end{proof}

\section{Approximating the boundary data}\label{sec:boundaryData}
Before we can implement the $c^\ast$-harmonic approximation, we face another problem. Lemma \ref{lem:localisation} suggests that the $c^\ast$-harmonic function $\phi$ we should use in Theorem \ref{thm:main} is given as a solution of the following Neumann-problem
\begin{align*}
\begin{cases}
-\Div \nabla c^\ast(\D\phi)= \mu-\lambda \quad& \text{ in } B_R\\
\nabla c^\ast(\D\phi)\cdot \nu = g_R-f_R & \text{ on } \p B_R.
\end{cases}
\end{align*}
However, $f_R$, $g_R$ as well as $\lambda,\mu$ are not sufficiently smooth for $\phi$ to make sense as a weak solution and we will not be able to apply the regularity results of Lemma \ref{lem:ellipticRegularity} as it stands. Hence, we will approximate $f_R$, $g_R$ by suitable $L^p(\p B_R)$-functions $\bar f_R$ and $\bar g_R$ and will replace $\lambda-\mu$ with $c=\int_{\p B_R} g_R-f_R$. After choosing a suitable radius $R\in[2,3]$, this approximation is given by the following result:
\begin{lemma}\label{lem:approximation}
Let $\tau>0$. Let $\lambda,\mu$ be admissible measures on $\R^d$. There is $\e>0$ such that if $E(4)+D(4)\leq \e$, then for every $R\in[2,3]$ there exist non-negative functions $\overline f_R$, $\overline g_R$ such that
\begin{align*}
W_c(f_R,\overline f_R)+W_c(g_R,\overline g_R)\lesssim \tau E(4)+ D(4)\\
\int_2^3 \int_{\p B_R} \overline g_R^p+\overline f_R^p \d R\lesssim E(4)+D(4)
\end{align*}
Here $f_R, g_R$ are the functions defined in \eqref{eq:2}.
\end{lemma}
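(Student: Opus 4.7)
The strategy is to replace each Dirac mass in $f_R$ and $g_R$ by a smeared-out version supported in a thin radial shell around $\p B_R$ and carried by the trajectories themselves, and then radially project via the map $\Pi_R$ of \eqref{eq:radial}. Fix a small parameter $\delta>0$ to be chosen later depending on $\tau$. For each $R\in[2,3]$ and each trajectory $(x,y)\in\Omega$ whose entry point $X(\sigma_R)$ lies on $\p B_R$, I distribute the corresponding $\pi$-mass uniformly over a thin tubular neighbourhood of the segment $\{X(t):t\in[\sigma_R-s_{x,y},\sigma_R]\}$, with $s_{x,y}$ chosen so that the segment has radial extent exactly $\delta$. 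Call the resulting non-negative density on $B_{R+\delta}\setminus B_R$ by $\nu_R^f$, and set $\overline f_R:=\widehat{\nu_R^f}$, the radial projection onto $\p B_R$. Construct $\nu_R^g$ and $\overline g_R$ analogously by smearing into the interior of $B_R$.

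For the Wasserstein estimate I use the canonical coupling that sends each Dirac mass of $f_R$ located at $X(\sigma_R)$ to its smeared and radially-projected image. Provided the trajectory is not nearly tangent to $\p B_R$, the radial projection of the segment lies within Euclidean distance $\lesssim\delta$ from $X(\sigma_R)$; combined with \eqref{ass:Cgrowth}, this gives $W_c(f_R,\overline f_R)\lesssim\delta^p\cdot\pi(\Omega\cap\{X(\sigma_R)\in\p B_R\})\lesssim\delta^p$, since the total flux mass is bounded by $\pi(\Omega)\lesssim1$. Choosing $\delta$ as a suitable fixed power of $\tau E(4)+D(4)$ then yields the required pointwise-in-$R$ bound.

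For the $L^p$ estimate I apply the second inequality of Lemma \ref{lem:projection} to $\nu_R^f$, noting that $\supp\nu_R^f\subset B_{R+\delta}\setminus B_R$:
\begin{align*}
\int_{\p B_R}\overline f_R^p\lesssim(\sup\nu_R^f)^{p-1}\int|R-|x||^{p-1}\d\nu_R^f.
\end{align*}
By construction $|R-|x||\lesssim\delta$ on $\supp\nu_R^f$, and the weighted integral on the right is, via Fubini in $R$, related to the local transport cost of trajectories crossing near $\p B_R$, itself controlled by $E(4)$. Integrating over $R\in[2,3]$ and invoking Corollary \ref{cor:linfty} to handle the contribution of trajectories grazing $\p B_R$ bounds $\int_2^3\int_{\p B_R}\overline f_R^p\d R\lesssim E(4)+D(4)$; the analogous treatment of $\overline g_R$ completes the estimate.

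The main obstacle is pointwise control of $\sup\nu_R^f$, since the density of the smeared measure depends on how many trajectory segments overlap in a given region. I expect this to follow from the $L^\infty$-bound of Lemma \ref{lem:linfty} combined with the $c$-cyclical monotonicity of $\supp\pi$: two trajectory segments cannot sit too close in angle without violating monotonicity, which limits the overlap and allows $\sup\nu_R^f$ to be absorbed into a constant. A secondary difficulty is the opposing demands on $\delta$ imposed by the Wasserstein bound (which pushes $\delta$ small) and the $L^p$ bound (which prefers $\delta$ larger); the fact that the $L^p$ bound is only required in the form averaged over $R\in[2,3]$ should provide exactly the flexibility needed to close both estimates simultaneously.
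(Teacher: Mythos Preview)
Your proposal has a genuine gap at precisely the point you flag as the main obstacle: the bound on $\sup\nu_R^f$. The hope that $c$-cyclical monotonicity of $\mathrm{spt}\,\pi$ prevents overlap of trajectory segments does not yield a uniform density bound when $\lambda,\mu$ are rough. The measures are only assumed close to Lebesgue in the $W_p$ sense via $D(4)$; they may be singular, even atomic. If $\lambda$ places positive mass near $\p B_R$, a positive $\pi$-mass of trajectories can emanate from (or pass through) an arbitrarily small region, and your smeared density $\nu_R^f$ blows up there regardless of monotonicity. Monotonicity constrains how pairs $(x,y)$ relate to one another, not how much total mass can occupy a neighbourhood. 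Moreover your two estimates pull against each other: the Wasserstein bound forces $\delta^p\lesssim\tau E(4)+D(4)$, while any reasonable density bound for $\nu_R^f$ degenerates like a negative power of $\delta$, so the $L^p$ estimate cannot close.

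The paper's construction avoids this entirely by routing through the data term. It takes the optimal plan $\overline\pi$ realising $W_c(\mu\llcorner B_4,\kappa_{\mu,4}\d x\llcorner B_4)$, disintegrates it as $\overline\pi(\d z|y)$, and glues it to $\pi$ to obtain a triple plan $\tilde\pi(\d x\d y\d z)$. Each exiting trajectory is then extended from $y$ to the point $z$, and $g'$ is defined as the distribution of these endpoints. Because the $z$-marginal of $\overline\pi$ is $\kappa_{\mu,4}\d x\llcorner B_4$, one gets $g'\leq\kappa_{\mu,4}\lesssim 1$ \emph{for free}, with no appeal to monotonicity. The radial projection $\overline g_R=\widehat{g'}$ then sits in the framework of Lemma \ref{lem:projection}, and the cost of the detour through $z$ is exactly $W_c(\mu\llcorner B_4,\kappa_{\mu,4}\d x\llcorner B_4)\lesssim D(4)$, which is where the $D(4)$ contribution in the first estimate comes from. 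Your construction never uses the transport plan to Lebesgue, and so cannot exploit the one mechanism available for producing a bounded density.
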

\begin{proof}
By symmetry it suffices to focus on the terms involving $g$.

We begin by constructing $\overline g_R$. 
Let $\overline\pi$ be optimal for $W_c(\mu\llcorner B_4,\kappa_{\mu,4} \d z\llcorner B_4)$. Extend $\overline \pi$, which is supported on $B_4\times B_4$ by $\frac{\mu\otimes \mu}{\mu(\R^d)}$ to $\R^d\times \R^d$. Note that this extension, still denoted $\overline\pi$ has marginals $\mu$ and $\kappa_{\mu,4}\d z\llcorner B_4+\mu \llcorner B_4^c$. Further due to the definition of $D$, $W_c(\mu\llcorner B_4,\kappa_{\mu,4} \d z \llcorner B_4)\lesssim D(4)$. Introduce the family of probability measures $\{\overline \pi(\cdot|y)\}_{y\in \R^d}$ via asking that for every test function $\zeta$
\begin{align*}
\int \zeta(y,z)\overline \pi(\d z|y)\mu(\d y) = \int \zeta(y,z)\overline \pi(\d y \d z).
\end{align*}
Let $\pi$ be the minimiser of $W_c(\lambda,\mu)$. Then define $\tilde \pi$ on $\R^d\times \R^d\times \R^d$ by the formula
\begin{align*}
\int \zeta(x,y,z)\tilde \pi(\d x \d y \d z) = \int \int \zeta(x,y,z)\overline\pi(\d z|y)\pi(\d x \d y)
\end{align*}
valid for any test function $\zeta$. We note that with respect to the $(x,y)$ variables $\tilde \pi$ has marginal $\pi$, while with respect to the $(y,z)$ variables $\tilde \pi$ has marginal $\overline\pi$. 

Fix $R\in[2,3]$. Extend a trajectory $X\in\Omega$ in a piecewise affine fashion by setting for $t\in[1,2]$,
\begin{align*}
X(t) = (t-1)z+(2-t)y.
\end{align*}
Note that the distribution $g^\prime$ of the endpoint of those trajectories that exit $\overline B_R$ during the time interval $[0,1]$ is given by
\begin{align}\label{eq:1}
\int\zeta \d g^\prime = \int_{\Omega \cap \{X(\tau)\in \p B_R\}} \zeta(z)\tilde \pi(\d x \d y \d z).
\end{align}
Note that due to Lemma \ref{lem:linfty}, $y=X(1)\in B_4$ for any trajectory $X$ that contributes to \eqref{eq:1}. Since $\overline \pi(B_4,B_4^c) = 0$, we deduce that also $z=X(2)\in B_4$ and hence that $g^\prime$ is supported in $B_4$. In particular, we may estimate for any $\zeta\geq 0$, using that the second marginal of $\overline\pi$ is $\kappa_{\mu,4}\d z\llcorner B_4+\mu \llcorner B_4^c$,
\begin{align*}
\int \zeta \d g^\prime \leq \int_{\{z\in B_4\}} \zeta(z)\overline \pi(\d y \d z) = \kappa_{\mu,4}\int_{B_4}\zeta.
\end{align*}
This shows that $g^\prime$ has a density, still denoted $g^\prime$, satisfying $g^\prime \leq \kappa_{\mu,4}$ and allows us to conclude the construction of $\overline g_R$ by defining
\begin{align*}
\int \zeta \d \overline g_R = \int \zeta\left(R \frac z {\lvert z\rvert}\right)g^\prime(\d z).
\end{align*}

We now turn to establishing the claimed estimates for $\overline g_R$. Note that, directly from the definitions of $\tilde \pi$, $g^\prime$ and $\overline g_R$, an admissible plan for $W_c(g_R,\overline g_R)$ is
\begin{align*}
\int_{\Omega \cap \{X(\tau)\in \p B_R\}} \zeta\left(X(\tau),R\frac z {\lvert z\rvert}\right)\d \tilde \pi.
\end{align*}
Indeed, due to the definition of $\tilde \pi$ and the definition of $g_R$ in \eqref{eq:2}, for any test function $\zeta$,
\begin{align*}
\int_{\Omega \cap \{X(\tau)\in \p B_R\}} \zeta(X(\tau))\d\tilde\pi = \int_{\Omega \cap \{X(\tau)\in \p B_R\}} \zeta(X(\tau))\d\pi= \int \xi \d g_R.
\end{align*}
On the other hand, using \eqref{eq:1} and the definition of $\overline g_R$, 
\begin{align*}
\int_{\Omega \cap \{X(\tau)\in \p B_R\}} \zeta\left(R \frac z {\lvert z\rvert}\right)\d\tilde\pi = \int \zeta\left(R\frac z {\lvert z\rvert}\right) \d g^\prime = \int \zeta \d \overline g_R.
\end{align*}

In particular, using the $p$-growth of $c$ \eqref{ass:growth},
\begin{align*}
W_c(g_R,\overline g_R)\lesssim \int_{\Omega \cap \{X(\tau)\in \p B_R\}} c\left(X(\tau)-R\frac z {\lvert z\rvert}\right)\d\tilde \pi\lesssim \int_{\Omega \cap \{X(\tau)\in \p B_R\}} \left\lvert X(\tau)-R\frac z {\lvert z\rvert}\right\rvert^p\d\tilde \pi.
\end{align*}
Noting that $\lvert X(\tau)-R\frac z {\lvert z\rvert}\rvert \leq 2 \lvert X(\tau)-z\rvert$, we deduce
\begin{align*}
\left\lvert X(\tau)-R\frac z {\lvert z\rvert}\right\rvert^p \lesssim \lvert x-y\rvert^p+\lvert y-z\rvert^p.
\end{align*}
Thus, we deduce using once again the $p$-growth of $c$ \eqref{ass:growth} and Corollary \ref{cor:linfty},
\begin{align*}
W_c(g_R,\overline g_R) \lesssim \int_{\Omega \cap \{\exists t\in[0,1]\colon X(t)\in \p B_R\}} c(x-y)\d \pi+ D(4)\lesssim E(4)^{1+\frac 1 {p+d}}+D(4).
\end{align*}
Choosing $\e$ sufficiently small, the first estimate holds.

\noindent Noting $\sup g^\prime\leq \kappa_{\mu,4}\lesssim 1$, in order to prove the second inequality, it suffices to prove
\begin{align*}
\int_2^3 \int \lvert R-\lvert x\rvert\rvert^{p-1} \d g^\prime \lesssim E(4)+D(4)
\end{align*}
and to apply Lemma \ref{lem:projection}. The condition on the support of $g$ in Lemma \ref{lem:projection} applies due to Lemma \ref{lem:linfty}.
Note that by definition of $g^\prime$,
\begin{align*}
\int \lvert R-\lvert x\rvert\rvert^{p-1}\d g^\prime =& \int_{\Omega \cap \{X(\tau)\in \p B_R\}} \lvert \lvert z\rvert-R\rvert^{p-1}\tilde\pi(\d x\d y \d z)\\
\lesssim& \int_{\{y\in B_4\}\cap \{\min_{[0,1]}\lvert X\rvert\leq R\leq \max_{[0,1]} \lvert X\rvert\}} \lvert x-y\rvert^{p-1}+\lvert y-z\rvert^{p-1}\tilde \pi(\d x \dy \d z).
\end{align*}
In order to obtain the second line, we observed that since $\lvert X(\tau)\rvert = R$, it holds that ${\lvert \lvert z\rvert-R\rvert \leq \lvert x-y\rvert + \lvert y-z\rvert}$. In addition we noted that, since $X(\tau)\in \p B_R$, we have ${\min_{[0,1]}X\leq R\leq \max_{[0,1]} X}$ and $X(1)\in B_4$ due to Lemma \ref{lem:linfty}. Integrating over $R$, this gives
\begin{align*}
&\int_2^3 \int \lvert \lvert z\rvert -R\rvert^{p-1} \d g^\prime\d R \\
\lesssim& \int_{\{y\in B_4\}} (\max_{[0,1]} X-\min_{[0,1]} X)\left(\lvert x-y\rvert^{p-1}+\lvert y-z\rvert^{p-1}\right)\tilde \pi(\d x \d y \d z)\\
\leq& \int_{\{y\in B_4\}} \lvert x-y\rvert\left(\lvert x-y\rvert^{p-1}+\lvert y-z\rvert^{p-1}\right)\tilde \pi(\d x \d y \d z)\\
\lesssim& \int_{\{y\in B_4\}} \lvert x-y\rvert^p\pi(\d x \d y) + \int \lvert y-z\rvert^p \overline\pi(\d y \d z)\\
\leq & E(4)+D(4).
\end{align*}
The second-to last line was obtained applying Young's inequality. This concludes the proof.
\end{proof}

\section{Restricting the data}\label{sec:data}

As we see from Lemma \ref{lem:approximation}, we will not be able to work on $B_4$ directly, but will have to pass to a smaller ball $B_R$ with some suitably chosen $R\in[2,3]$. Hence, we need to control-- for a well-chosen $R$-- $D(R)$, while at the moment we only control $D(4)$. Unfortunately, this does not follow immediately from the definition but requires a technical proof utilising ideas of the previous sections. The outcome of these considerations is the following lemma:
\begin{lemma}\label{lem:dataRestriction}
For any non-negative measure $\mu$ there is $\e>0$ such that if $D(4)\leq \e$, then
\begin{align*}
\int_2^3 \left(W_{c}(\mu\llcorner B_R,\kappa_{\mu,R}\d x \llcorner B_R)+\frac 1 {\kappa_{\mu,R}}(\kappa_{\mu,R}-1)^p\right)\d R\lesssim D(4).
\end{align*}
\end{lemma}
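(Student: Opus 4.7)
The plan is to use the optimal plan $\pi^{\star}$ for $W_{c}(\mu\llcorner B_{4},\kappa_{\mu,4}\d x\llcorner B_{4})$, which by definition of $D(4)$ satisfies $\int c(x-y)\d\pi^{\star}\lesssim D(4)\lvert B_{4}\rvert$, as a building block. Since $\pi^{\star}$ has $c$-cyclically monotone support and $D(4)\ll 1$, the $L^{\infty}$-bound of Lemma~\ref{lem:linfty} applies and yields $\lvert x-y\rvert\lesssim D(4)^{1/(p+d)}$ on $\mathrm{spt}\,\pi^{\star}\cap\#_{3}$. Averaging over $R\in[2,3]$, in the spirit of Corollary~\ref{cor:linfty}, is the key mechanism that will convert individual weak scale-$R$ bounds into the claimed estimate.

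For the data term I would decompose $(\kappa_{\mu,R}-1)^{p}\lesssim(\kappa_{\mu,4}-1)^{p}+(\kappa_{\mu,R}-\kappa_{\mu,4})^{p}$. The first summand is already controlled by $D(4)$ through the very definition of $D(4)$. For the second, I rewrite the imbalance through $\pi^{\star}$ as
\begin{equation*}
(\kappa_{\mu,R}-\kappa_{\mu,4})\lvert B_{R}\rvert=\int(\mathbf{1}_{B_{R}}(x)-\mathbf{1}_{B_{R}}(y))\d\pi^{\star},
\end{equation*}
which is bounded in absolute value by $f(R):=\pi^{\star}\{(x,y):[x,y]\text{ crosses }\partial B_{R}\}$. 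The $L^{\infty}$-bound gives $\lVert f\rVert_{\infty}\lesssim D(4)^{1/(p+d)}$ since crossings at each $R$ are supported in an annulus around $\partial B_{R}$ of thickness $\lesssim D(4)^{1/(p+d)}$, while Fubini exactly as in Corollary~\ref{cor:linfty} yields $\int_{2}^{3}f(R)\d R\leq\int\lvert x-y\rvert\d\pi^{\star}\lesssim D(4)^{1/p}$. Interpolation $\lVert f\rVert_{p}^{p}\leq\lVert f\rVert_{\infty}^{p-1}\lVert f\rVert_{1}$ then gives $\int_{2}^{3}(\kappa_{\mu,R}-\kappa_{\mu,4})^{p}\d R\lesssim D(4)^{1+\alpha}$ for some $\alpha=\alpha(p,d)>0$, which is absorbed into $D(4)$ using the smallness assumption $D(4)\leq\e$.

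For the Wasserstein term I construct a competitor $\tilde{\pi}_{R}\in\Pi(\mu\llcorner B_{R},\kappa_{\mu,R}\d x\llcorner B_{R})$ from $\pi^{\star}$ by restricting to $B_{R}\times B_{R}$, correcting the first marginal via the diagonal plan $(\mathrm{Id},\mathrm{Id})_{\#}(\mu\llcorner B_{R}-(\pi^{\star}\llcorner(B_{R}\times B_{R}))_{1})$, and matching the resulting second-marginal defect to $\kappa_{\mu,R}\d x\llcorner B_{R}$ by a plan routed through entry/exit points of $\pi^{\star}$-segments on $\partial B_{R}$, in the manner of the construction in Lemma~\ref{lem:approximation}. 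The interior cost of $\tilde{\pi}_{R}$ is bounded by $\int c(x-y)\d\pi^{\star}\lesssim D(4)\lvert B_{4}\rvert$, while the boundary correction at each $R$ is supported on pairs crossing $\partial B_{R}$ and contributes $c(x-y)$; each such pair contributes for $R$ lying in an interval of length $\leq\lvert x-y\rvert$, so by Fubini the total boundary contribution integrated over $R\in[2,3]$ is bounded by $\int\lvert x-y\rvert\cdot c(x-y)\d\pi^{\star}\leq\lVert x-y\rVert_{L^{\infty}(\mathrm{spt}\,\pi^{\star})}\int c(x-y)\d\pi^{\star}\lesssim D(4)^{1+1/(p+d)}\leq D(4)$. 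Combining with Lemma~\ref{lem:triangleInequality} to absorb the diagonal intermediate step closes the estimate.

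The main obstacle is ensuring that the averaging over $R$ converts the pointwise bounds -- which are only of the form $D(4)^{1/(p+d)}$ or $D(4)^{1/p}$ -- into the desired estimate $\lesssim D(4)$. This relies precisely on the interplay of the $L^{\infty}$-bound from Lemma~\ref{lem:linfty} with the Fubini interval-length argument of Corollary~\ref{cor:linfty}, and the smallness $D(4)\leq\e$ is what absorbs the residual small power $D(4)^{\alpha}$ of the estimate into the constant.
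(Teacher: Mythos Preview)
Your proposal has two genuine gaps, both of which stem from the same missing idea: crude total-mass bounds on the crossing set are not sharp enough, and the paper's proof relies instead on an $L^p$ boundary-data estimate combined with the radial projection Lemma~\ref{lem:projection}.

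\textbf{Data term.} Your interpolation $\lVert f\rVert_p^p\leq\lVert f\rVert_\infty^{p-1}\lVert f\rVert_1$ with $\lVert f\rVert_\infty\lesssim D(4)^{1/(p+d)}$ and $\lVert f\rVert_1\lesssim D(4)^{1/p}$ yields the exponent
\[
\frac{p-1}{p+d}+\frac1p=1-\frac{d(p-1)}{p(p+d)}<1\quad\text{for }p>1,
\]
so you obtain $D(4)^{1-\beta}$ for some $\beta>0$, not $D(4)^{1+\alpha}$. The paper instead writes $\kappa_{\mu,R}-\kappa_{\mu,4}=\kappa_g-\kappa_f$ where $\kappa_g\lvert B_R\rvert=\int g'$ is the mass of exiting trajectories, and uses the Jensen part of Lemma~\ref{lem:projection} to get $\kappa_g^p\lesssim\int_{\partial B_R}\hat g^p$. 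The second inequality of Lemma~\ref{lem:projection}, crucially using that the \emph{endpoint} measure $g'$ has bounded Lebesgue density (it is dominated by $\kappa_{\mu,4}\,\d x$), gives $\int_{\partial B_R}\hat g^p\lesssim\int\lvert R-\lvert z\rvert\rvert^{p-1}\,g'(\d z)$. Since $\lvert R-\lvert z\rvert\rvert\leq\lvert x-y\rvert$ on exiting trajectories and the $R$-interval of exit has length $\leq\lvert x-y\rvert$, Fubini yields $\int_2^3\kappa_g^p\,\d R\lesssim\int\lvert x-y\rvert^p\,\d\pi^\star\lesssim D(4)$. The point is that Lemma~\ref{lem:projection} converts the $p$-th power of total mass into a $(p-1)$-weighted distance integral \emph{before} integrating in $R$; your crude $L^\infty$ bound on $f$ throws this structure away.

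\textbf{Wasserstein term.} The assertion that the boundary correction ``contributes $c(x-y)$'' per crossing pair is unjustified. After your diagonal step the second marginal is $\nu_2+(\mu\llcorner B_R-\nu_1)$, and its defect from $\kappa_{\mu,R}\,\d x\llcorner B_R$ consists of (i) the entering-trajectory targets in $B_R$, (ii) the exiting-trajectory sources in $B_R$ (a possibly singular piece of $\mu$), and (iii) the constant $(\kappa_{\mu,R}-\kappa_{\mu,4})\,\d x$. Routing through entry/exit points on $\partial B_R$ reduces (i) and (ii) to boundary measures on $\partial B_R$ at cost $\lesssim\sum_{\text{crossing}}c(x-y)$, but you still have to redistribute these boundary measures (and the constant (iii)) into Lebesgue measure on $B_R$. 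A crude coupling bounds this last step by its total variation $\sim f(R)$, and $\int_2^3 f(R)\,\d R\lesssim D(4)^{1/p}$ only. The paper handles exactly this redistribution step by a Benamou--Brenier competitor with flux $j=\nabla c^\ast(\D\phi)$, where $\phi$ solves a Neumann problem with boundary data $\hat g$; the energy estimate \eqref{eq:alternativeEnergy} then bounds the cost by $\int_{\partial B_R}\hat g^p$, which---via Lemma~\ref{lem:projection} as above---integrates to $D(4)$. This PDE/energy step is the essential ingredient your proposal is missing, and Lemma~\ref{lem:approximation} alone does not supply it.
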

\begin{proof}
Fix $R\in [2,3]$. In this proof $\pi$ will denote the optimal transference plan for the problem ${W_{c}(\mu\llcorner B_4,\kappa_{\mu,4}\d x\llcorner B_4)}$. 
Note that due to the $L^\infty$-bound Lemma \ref{lem:linfty}, if $X(0)\in B_R$, then $X(1)\in B_4$ and if $X(1)\in B_R$, then $X(0)\in B_4$. Define the measures $0\leq f^\prime\leq \kappa_{\mu,4}$ on $\overline B_R$ and $0\leq g^\prime\leq \kappa_{\mu,4}$ on $\overline B_4\setminus B_R$, which record where exiting and entering trajectories end up by asking that for all test functions $\zeta$,
\begin{align*}
\int \zeta \d f^\prime\coloneqq \int_{\Omega\cap \{X(0)\not\in B_R\}\cap \{X(1)\in B_R\}}\zeta(X(1))\d\pi \\
\int \zeta \d g^\prime\coloneqq \int_{\Omega\cap \{X(0)\in B_R\}\cap \{X(1)\not\in B_R\}}\zeta(X(1))\d\pi.
\end{align*}
Introduce the mass densities
\begin{align*}
\kappa_f = \frac{f^\prime(\R^d)} {\lvert B_R\rvert}\leq \kappa_{\mu,R}, \qquad \kappa_g = \frac{g^\prime(\R^d)}{\lvert B_R\rvert}.
\end{align*}
We use Lemma \ref{lem:triangleInequality} to deduce
\begin{align*}
&W_c(\mu \llcorner B_R,\kappa_{\mu,R}\d x \llcorner B_R)\\
\lesssim& W_c(\mu\llcorner B_R,\kappa_{\mu,4}\d x\llcorner B_R- f^\prime+g^\prime) + W_c(\kappa_{\mu,4}\d x\llcorner B_R-f^\prime + g^\prime,(\kappa_{\mu,4}-\kappa_f)\d x \llcorner B_R+g^\prime)\\
&\quad+ W_c((\kappa_{\mu,4}-\kappa_f)\d x \llcorner B_R+g^\prime,\kappa_{\mu,R}\d x \llcorner B_R)\\
=& I + II + III.
\end{align*}
Restricting $\pi$ to trajectories that start in $B_R$ gives an admissible plan for $I$. Consequently,
\begin{align*}
I=W_c(\mu\llcorner B_R,\kappa_{\mu,4}\d x\llcorner B_R-f^\prime + g^\prime)\leq W_c(\mu\llcorner B_4,\kappa_{\mu,4}\d x \llcorner B_4)\leq D(4).
\end{align*}

Since $II$ will be estimated in the same way as $III$, but is slightly more tricky, we first estimate $III$. In order to estimate $III$, introduce the projection $\hat g$ of $g^\prime$ onto $\p B_R$ via \eqref{eq:radial}.
Using Lemma \ref{lem:triangleInequality}, we deduce
\begin{align}\label{eq:data1}
III\lesssim& W_c((\kappa_{\mu,4}-\kappa_f)\d x\llcorner B_R+\hat g,\kappa_{\mu,R}\d x \llcorner B_R)+W_c(\hat g, g^\prime).
\end{align}
Regarding the first term, we claim that
\begin{align*}
W_c((\kappa_{\mu,4}-\kappa_f)\d x \llcorner B_R+\hat g,\kappa_{\mu,R}\d x \llcorner B_R)\lesssim \int (\hat g)^p.
\end{align*}
Indeed, an admissible density-flux pair $(\rho,j)$ for the Benamou-Brenier formulation \eqref{eq:eulerianFormulation} is given by
\begin{align*}
\begin{cases}
	\rho_t = (\kappa_{\mu,4}\d x\llcorner B_4-\kappa_f+t\kappa_g) +(1-t)\hat g g\\
	j_t = \nabla c^\ast(\D\phi) \d x \llcorner B_R,
\end{cases}
\end{align*}
where $\phi$ solves
\begin{align*}
\begin{cases}
-\Div \nabla c^\ast(\D\phi) = \kappa_g \quad&\text{ in } B_R\\
\nu \cdot \nabla c^\ast(\D\phi) = \hat g &\text{ on } \p B_R.
\end{cases}
\end{align*}
We find, writing $s=(\kappa_{\mu,4}-\kappa_f+t\kappa_g)$, for any $\zeta$ supported in $B_R$,
\begin{align}\label{data2}
\int \zeta \d j_t -\int c^\ast(\zeta)\d \rho_t =& \int_{B_R} \zeta\cdot \nabla c^\ast(\D\phi)-c^\ast(\zeta)s\d x\nonumber\\
\leq& \int_{B_R}\int_0^1 s c \left(\frac 1 s\nabla c^\ast(\D\phi)\right)\d t \d x
\end{align}
To obtain the second line, we used the Fenchel-Young inequality. Assuming $\lvert s-1\rvert \ll 1$ for now, using the $p$-growth of $c$ \eqref{ass:growth} it is straightforward to see that
\begin{align*}
\int_{B_R}\int_0^1 s c \left(\frac 1 s \nabla c^\ast(\D\phi)\right)\d t \d x \lesssim \int_{B_R} c\left(\nabla c^\ast(\D\phi)\right)\d x \lesssim \int_{\p B_R} (\hat g)^p.
\end{align*}
To obtain the last inequality, we used the energy inequality \eqref{eq:alternativeEnergy}. We now estimate $\int (\hat g)^p$ and $W_c(\hat g,g^\prime)$ arguing exactly as in Lemma \ref{lem:dataRestriction} in order to conclude
\begin{align*}
\int \hat g^p+W_c(\hat g,g^\prime)\lesssim D(4).
\end{align*}

Since $\kappa_{\mu,R} = \kappa_{4,R}-\kappa_f+ \kappa_g$ and $\lvert\kappa_{4,R}-1\rvert\lesssim D(4)$, to deduce that $\lvert s-1\rvert \ll 1$ and to conclude the estimate of $III$, it suffices to show $\kappa_f^p+\kappa_g^p\lesssim D(4)$. By symmetry it suffices to consider $\kappa_g$. Since $\hat g$ is supported on $\p B_R$, using Young's inequality, we find
\begin{align}\label{eq:data2}
\kappa_g^p = \frac {\hat g(\R^d)^p}{\lvert B_R\rvert^p}\leq \frac{\lvert \p B_R\rvert^{p-1}}{\lvert B_R\rvert^p}\int_{\p B_R}(\hat g)^p\lesssim D(4).
\end{align}
This concludes the estimate for $III$.

It remains to estimate $II$. Using the subadditivity of $W_c$, we have
\begin{align*}
&W_c(\kappa_{\mu,4}\d x \llcorner B_R-f^\prime + g^\prime,(\kappa_{\mu,4}-\kappa_f)\d x \llcorner B_R+g^\prime)\\
\leq& W_c(\kappa_{\mu,4}\d x \llcorner B_R-f^\prime,(\kappa_{\mu,4}-\kappa_f)\d x \llcorner B_R)+W_c(g^\prime,g^\prime)\\
=& W_c(\kappa_{\mu,4}\d x \llcorner B_R-f^\prime,(\kappa_{\mu,4}-\kappa_f)\d x\llcorner B_R ).
\end{align*}
We want to proceed exactly as we did in the estimate for $III$, the only delicate issue being that we do not have $\kappa_{\mu,4}\d x \llcorner B_R-f^\prime \geq c>0$. This is necessary in \eqref{data2}. However, this can be remedied by using Corollary \ref{cor:addConstant} to deduce
\begin{align*}
&W_c(\kappa_{\mu,4}\d x \llcorner B_R-f^\prime, (\kappa_{\mu,4}-\kappa_f)\d x \llcorner B_R)\\
\lesssim& W_c((2\kappa_{\mu,4}-\kappa_f)\d x \llcorner B_R-f^\prime, 2(\kappa_{\mu,4}-\kappa_f)\d x \llcorner B_R).
\end{align*}
Note that since $\kappa_{\mu,4}\d x \llcorner B_R-f^\prime \geq 0$ and using \eqref{eq:data2}, after choosing $\e$ sufficiently small,
\begin{align*}
(2\kappa_{\mu,4}-\kappa_f)\d x \llcorner B_R-f^\prime\geq \kappa_{\mu,4}-\kappa_f\geq \frac 1 2\kappa_{\mu,4}\geq c>0.
\end{align*}
In order to obtain the last inequality, we used the definition of $D$ and chose $\e$ sufficiently small. This allows to proceed using the same argument as for $III$ to conclude
\begin{align*}
W_c(\kappa_{\mu,4}\d x \llcorner B_R-f^\prime, (\kappa_{\mu,4}-\kappa_f)\d x \llcorner B_R)\lesssim D(4).
\end{align*}
This completes the proof.
\end{proof}

\section{The \texorpdfstring{$c^\ast$}{}-harmonic approximation result}\label{sec:proof}
The goal of this section is to prove the $c^\ast$-harmonic approximation result. With the results of the previous sections in hand, we can give a more precise version of Theorem \ref{thm:main}. In particular, we can make explicit the problem that $\phi$ solves.

To this end, in light of Lemma \ref{lem:approximation} and Lemma \ref{lem:dataRestriction}, given $\tau>0$, we fix $R\in(2,3)$ such that there exist non-negative $\overline f_R$, $\overline g_R$ such that
\begin{align*}
W_c(f_R,\overline f_R)+W_c(g_R,\overline g_R)\lesssim \tau E(4)+D(4)\nonumber\\
D(R)+\int_{\p B_R} \overline f_R^p+\overline g_R^p \lesssim E(4)+D(4).
\end{align*}
Then let $\phi$ be a solution with $\int_{B_R}\phi\d x = 0$ of
\begin{align}\label{eq:phi}
\begin{cases}
-\Div \nabla c^\ast(\D\phi) = c_R \quad&\text{ in } B_R\\
\nabla c^\ast(\D\phi)\cdot\nu = \overline g_R-\overline f_R &\text{ on } \p B_R.
\end{cases}
\end{align}
where $c_R = \lvert B_R\rvert^{-1}\left(\int_{\p B_R} \overline g_R-\overline f_R\right)$ is the constant so that \eqref{eq:phi} is well-posed.
We emphasize that while we do not make the dependence explicit in our notation, $\phi$ depends on the choice of radius $R$.

With this notation in place, we state a precise version of our main result.
\begin{theorem}\label{thm:mainBody}
For every $\tau>0$, there exist positive constants $\e(\tau),C(\tau)>0$ such that if $E(4)+D(4)\leq \e(\tau)$, then there exists $R\in(2,3)$ such that
\begin{align*}
\int_{\#_1} c(x-y-\nabla c^\ast(\D\phi)) \leq \tau E(4) + C(\tau) D(4),
\end{align*}
where $\phi$ solves \eqref{eq:phi}.
\end{theorem}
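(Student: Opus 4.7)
The strategy follows the roadmap described after equation \eqref{eq:PDEterm} in the introduction, generalising the argument of \cite{Goldman2021} to the strongly $p$-convex setting. First, I select $R \in (2,3)$ by a pigeonhole argument: Lemma \ref{lem:approximation}, Lemma \ref{lem:dataRestriction}, and Corollary \ref{cor:linfty} each furnish integral-in-$R$ controls on $[2,3]$ bounded by $\tau E(4) + C(\tau) D(4)$, so we can fix a single radius $R$ for which the approximations $\overline f_R, \overline g_R$, the localised data $D(R)$, and the mass of $\pi$-trajectories crossing $\p B_R$ are all simultaneously small. I then define $\phi$ by \eqref{eq:phi} and note that Lemma \ref{lem:ellipticRegularity} applies.

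The heart of the proof is the orthogonality inequality (Lemma \ref{lem:orthog}): applying the strong $p$-convexity \eqref{ass:elliptic} to the pair $(x-y, \nabla c^\ast(\D\phi(X(t))))$ and using the Fenchel equality $c^\ast(\D\phi) + c(\nabla c^\ast(\D\phi)) = \langle \D\phi, \nabla c^\ast(\D\phi)\rangle$ yields a pointwise bound which, after integration along the trajectories $X(t)$ and against $\pi$, splits $\int_{\#_1} c(x-y-\nabla c^\ast(\D\phi))\,\d\pi$ into the two terms \eqref{eq:diffEnergies} and \eqref{eq:PDEterm}, modulo boundary-crossing errors controlled by Corollary \ref{cor:linfty} and the $L^\infty$-displacement bound of Lemma \ref{lem:linfty}.

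The first term \eqref{eq:diffEnergies} (Lemma \ref{lem:firstTerm}) is handled by first applying the localisation Lemma \ref{lem:localisation} to bound the transport energy of $\pi$ by $(1+\delta) W_c(\lambda\llcorner B_R + f_R, \mu\llcorner B_R + g_R)$, then replacing $f_R, g_R$ by $\overline f_R, \overline g_R$ via Lemma \ref{lem:approximation} and Lemma \ref{lem:triangleInequality}, and finally constructing a Benamou--Brenier competitor with flux $j_t = \nabla c^\ast(\D\phi) \d x\llcorner B_R$ exactly as in the proof of Lemma \ref{lem:dataRestriction}; Fenchel--Young bounds the resulting action by $\int_{B_R} c(\nabla c^\ast(\D\phi))\d x$. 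The second term \eqref{eq:PDEterm} (Lemma \ref{lem:secondTerm}) is decomposed as
\begin{align*}
\int_\Omega\int_\tau^\sigma \langle X'(t), \D\phi(X(t))\rangle\d t\d\pi - \int_\Omega\int_\tau^\sigma \langle \nabla c^\ast(\D\phi), \D\phi\rangle(X(t))\d t\d\pi.
\end{align*}
The first integrand is $\tfrac{d}{dt}\phi(X(t))$ by the chain rule and telescopes to $\phi(X(\sigma))-\phi(X(\tau))$; by \eqref{eq:2} this equals $\int\phi\,(\d g_R-\d f_R)$, which after swapping to $\overline g_R, \overline f_R$ and using the weak form of \eqref{eq:phi} is identified, via the Fenchel identity, with the bulk term in the second integral, up to controlled remainders.

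The main obstacle is the limited regularity of $\phi$: unlike the quadratic case \cite{Goldman2021} where $\phi$ is harmonic, here the best available regularity up to the boundary is $C^{1,\beta}$ of the smoothed $\phi^r$ from Lemma \ref{lem:ellipticRegularity}. Consequently Step four must be carried out with $\phi^r$ in place of $\phi$: the error from replacing $\phi$ by $\phi^r$ is controlled by \eqref{eq:diff}, the approximation against $\mu, \lambda$ by Lemma \ref{lem:C2measures} (which only requires H\"older regularity of the integrand via \eqref{eq:regularityPhiR}), and the passage from the line integral along $X(t)$ to a bulk integral on $B_R$ uses the interior Lipschitz bound \eqref{eq:interior} away from $\p B_R$ together with Lemma \ref{lem:linfty}. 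Tuning $r$ to a small power of $E(4)+D(4)$ lets one absorb all the resulting errors into the final bound $\tau E(4) + C(\tau) D(4)$.
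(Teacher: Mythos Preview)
Your plan is correct and matches the paper's approach: the key splitting is Lemma \ref{lem:orthog}, the two main terms are handled via localisation plus a Benamou--Brenier competitor (Lemma \ref{lem:firstTerm}) and the chain-rule/PDE argument (Lemma \ref{lem:secondTerm}), with a third error term from replacing $\int_{B_R}\cdot\,\d x$ by $\int_\Omega\int_\sigma^\tau\cdot\,\d t\,\d\pi$ (Lemma \ref{lem:thirdTerm}), and the mollification $\phi^r$ manages the limited boundary regularity as you describe. Two points of precision worth flagging: the paper applies Lemma \ref{lem:orthog} to $\phi^r$ from the outset (since the lemma requires $C^1(\overline B_R)$) and only switches back to $\phi$ after restricting to $\Omega_{3/2}$ where the interior estimate \eqref{eq:interior} is available (Lemma \ref{lem:lefthandSide}); and the telescoping $\int_\Omega[\phi^r(X(\tau))-\phi^r(X(\sigma))]\,\d\pi$ produces not only the boundary term $\int_{\partial B_R}\phi^r\,\d(g_R-f_R)$ but also a bulk term $\int_{B_R}\phi^r\,\d(\mu-\lambda)$, which is where your invocation of Lemma \ref{lem:C2measures} against $\mu,\lambda$ is actually needed.
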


The proof will be a direct consequence of the lemmata we prove in the following subsections. We begin in Section \ref{sec:quasiOrthog} by using strong $p$-convexity in order to bound a quantity related to the left-hand side of the estimate in Theorem \ref{thm:mainBody} by a difference of energies, as well as two error terms. The first error term arises from the approximation of the boundary data, while the second error term comes from passing to the perspective of trajectories. We construct a competitor to estimate the difference of energies and estimate the two error terms in Section \ref{sec:errorEstimates}. Collecting estimates, we conclude the proof of Theorem \ref{thm:mainBody} in Section \ref{sec:conclusion}.

\subsection{Quasi-orthogonality}\label{sec:quasiOrthog}
The key observation in order to prove Theorem \ref{thm:mainBody} is contained in the following elementary lemma, which relies on the strong $p$-convexity of $c$.
\begin{lemma}\label{lem:orthog}
For any $\pi\in \Pi(\lambda,\mu)$ and $\phi$ continuously differentiable in $\overline B_R$, there is $c(p,\Lambda)$ such that we have
\begin{align*}
&c(p,\Lambda)\int_\Omega \int_\sigma^\tau V\left(\dot X(t),\nabla c^\ast(\nabla\phi(X(t))\right)\d t\d\pi \\
\leq& \int_\Omega c(x-y)\d\pi - \int_{B_R}c(\nabla ^\ast c(\D\phi))\d x - \int_\Omega \int_\sigma^\tau \langle \dot X(t)-\nabla c^\ast(\D \phi(X(t)),\D\phi(X(t))\rangle \d t\d\pi \\
&\quad+ \int_{B_R}c(\nabla c^\ast(\D\phi))\d x-\int_{\Omega}\int_\sigma^\tau c(\nabla c^\ast(\D\phi(X(t)))\d t\d\pi.
\end{align*}
\end{lemma}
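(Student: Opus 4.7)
The core tool is the consequence \eqref{ass:smoothness} of strong $p$-convexity together with the $C^1$-regularity of $c$: for every $u,v\in\R^d$,
\begin{align*}
c(u)\geq c(v)+\langle \nabla c(v),u-v\rangle+\lambda V(u,v)
\end{align*}
for some $\lambda=\lambda(p,\Lambda)>0$. The plan is to apply this pointwise in $t\in[\sigma,\tau]$ with the choice $v=\nabla c^\ast(\D\phi(X(t)))$. Because $\nabla c\circ\nabla c^\ast=\tp{Id}$ (recalled in Section \ref{sec:costFunction}), this makes $\nabla c(v)=\D\phi(X(t))$, which is exactly what produces the PDE-type scalar product appearing on the right-hand side of the claim. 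Choosing $u=\dot X(t)$, and noting that $X$ is affine so $\dot X$ is constant in $t$ and equal (up to sign) to $x-y$, the strong convexity estimate becomes, for a.e.\ $t\in[\sigma,\tau]$,
\begin{align*}
c(\dot X)\geq c(\nabla c^\ast(\D\phi(X(t))))+\langle \dot X-\nabla c^\ast(\D\phi(X(t))),\D\phi(X(t))\rangle+\lambda V(\dot X,\nabla c^\ast(\D\phi(X(t)))).
\end{align*}

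The next step is to integrate this inequality in $t$ over $[\sigma,\tau]$ and then to upgrade the left-hand side from $(\tau-\sigma)c(\dot X)$ to the full cost $c(x-y)$. Since $c\geq 0$ by \eqref{ass:growth} and $\tau-\sigma\in[0,1]$, the trivial bound $(\tau-\sigma)c(x-y)\leq c(x-y)$ does the job. Rearranging gives
\begin{align*}
\lambda\int_\sigma^\tau V(\dot X,\nabla c^\ast(\D\phi(X)))\d t\leq c(x-y)-\int_\sigma^\tau\langle \dot X-\nabla c^\ast(\D\phi(X)),\D\phi(X)\rangle\d t-\int_\sigma^\tau c(\nabla c^\ast(\D\phi(X)))\d t.
\end{align*}

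I then integrate against $\pi$ over $\Omega$. To cast the result in the stated form, I add and subtract the single deterministic quantity $\int_{B_R}c(\nabla c^\ast(\D\phi))\d x$, which is bookkeeping only (the two $B_R$-integrals on the right-hand side of the claim cancel algebraically, but writing them out isolates the ``energy-difference'' term \eqref{eq:diffEnergies} on the one hand and the ``trajectory error'' $\int_{B_R}c(\nabla c^\ast(\D\phi))\d x-\int_\Omega\int_\sigma^\tau c(\nabla c^\ast(\D\phi(X)))\d t\d\pi$ on the other, as announced in the introduction). The only technical point to verify is the measurability of $\sigma,\tau$ and of the compositions $t\mapsto \D\phi(X(t))$, $t\mapsto \nabla c^\ast(\D\phi(X(t)))$; this follows from the continuity of $(x,y)\mapsto X(\cdot)$ as a map into $C^0([0,1];\R^d)$, already noted after \eqref{eq:2}, together with the $C^1$-regularity of $\phi$ on $\overline B_R$ and of $\nabla c^\ast$. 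I do not anticipate any substantive obstacle: the lemma is essentially a direct algebraic manipulation of the strong $p$-convexity inequality, combined with the tautology $(\tau-\sigma)\leq 1$ that converts the Lagrangian formulation back into the cost $c(x-y)$.
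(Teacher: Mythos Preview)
Your proposal is correct and follows exactly the same route as the paper: apply \eqref{ass:smoothness} pointwise with $u=\dot X(t)$ and $v=\nabla c^\ast(\D\phi(X(t)))$, use $\nabla c\circ\nabla c^\ast=\mathrm{Id}$, integrate, and bound $\int_\sigma^\tau c(\dot X)\,\d t\leq c(x-y)$. The add--subtract of $\int_{B_R}c(\nabla c^\ast(\D\phi))\,\d x$ is, as you note, pure bookkeeping already built into the statement.
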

\begin{proof}
We apply \eqref{ass:smoothness} with $x = \dot X(t)$ and $y=\nabla c^\ast(\D\phi(X(t)))$. Noting that we have ${\nabla c(\nabla c^\ast(\D\phi)) = \D\phi}$ and $\int_\Omega \int_\sigma^\tau c(\dot X(t))\d t\d\pi \leq \int_\Omega c(x-y)\d\pi$ this gives the desired result.
\end{proof}
\subsection{Error estimates}\label{sec:errorEstimates}
We would like to apply Lemma \ref{lem:orthog} with $\phi$ solving \eqref{eq:phi}. In order to do so, we require $\phi\in C^1(\overline B_R)$. However note that $\overline g_R$ and $\overline f_R$ will in general not be sufficiently smooth to ensure that $\phi\in C^1(\overline B_R)$. Thus, we approximate them using mollification. To be precise, let $0<r\ll 1$ and denote by $\overline f_R^r$ and $\overline g_R^r$, respectively, the convolution with a smooth convolution kernel (on $\p B_R)$ at scale $r$ of $\overline f_R$ and $\overline g_R$. Set $\phi^r$ to be the solution with $\int_{B_R} \phi^r\d x = 0$ of
\begin{align}\label{eq:phiMollif}
\begin{cases}
-\Div \nabla c^\ast(\D\phi^r) = c^r \quad&\text{ in } B_R\\
\nabla c^\ast(\D\phi^r)\cdot\nu = \overline g_R^r-\overline f_R^r &\text{ on } \p B_R.
\end{cases}
\end{align}
Here $c^r= \lvert B_R\rvert^{-1}\left(\int_{\p B_R} \overline g_R^r-\overline f_R^r\right)$ is the constant such that \eqref{eq:phiMollif} is well-posed.

We begin by showing that replacing $\overline f_R$ and $\overline g_R$ with $\overline f_R^r$ and $\overline g_R^r,$ respectively is not detrimental on the left-hand side of the estimate in Lemma \ref{lem:orthog}.
\begin{lemma}\label{lem:lefthandSide}
For every $0<\tau$ there exists $\e(\tau)$ and $C(\tau), r_0(\tau)>0$, such that if it holds that ${E(4)+D(4)\leq \e(\tau)}$ and $0<r\leq r_0$, then there exists $R\in[2,3]$ such that if $\phi$ solves \eqref{eq:phi} and $\phi^r$ solves \eqref{eq:phiMollif}, then
\begin{align*}
&\int_{\Omega_{3/2}} \int_{\sigma_{3/2}}^{\tau_{3/2}} V\left(\dot X(t),\nabla c^\ast(\D\phi(X(t))\right)\d t\d\pi-\int_{\Omega_{3/2}} \int_{\sigma_{3/2}}^{\tau_{3/2}} V\left(\dot X(t),\nabla c^\ast(\D\phi^r(X(t))\right)\d t\d\pi\\
\lesssim& \tau (E(4)+D(4)).
\end{align*}
\end{lemma}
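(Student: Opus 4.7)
The plan is to bound the integrand pointwise using \eqref{eq:VDiff}, apply Hölder's inequality, and exploit the $W^{1,p'}$-closeness of $\phi$ and $\phi^r$ provided by \eqref{eq:diff}.

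First, I would fix $R\in[2,3]$ via Lemmas \ref{lem:approximation} and \ref{lem:dataRestriction}, so that the boundary data satisfy $\int_{\p B_R}(\overline f_R^p+\overline g_R^p)\lesssim E(4)+D(4)$. The interior Lipschitz estimate \eqref{eq:interior} applied on $B_{3/2}\subset B_R$ gives
\begin{align*}
\sup_{B_{3/2}}\lvert\D\phi\rvert^{p'}+\sup_{B_{3/2}}\lvert\D\phi^r\rvert^{p'}\lesssim E(4)+D(4),
\end{align*}
hence by \eqref{eq:sizeDualGradient}, $\lvert\nabla c^\ast(\D\phi(X(t)))\rvert+\lvert\nabla c^\ast(\D\phi^r(X(t)))\rvert\lesssim (E(4)+D(4))^{1/p}$ whenever $X(t)\in\overline B_{3/2}$. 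Furthermore, Lemma \ref{lem:linfty} yields $\lvert\dot X\rvert=\lvert y-x\rvert\lesssim (E(4)+D(4))^{1/(p+d)}$ on $\mathrm{Spt}\,\pi\cap\#_3$.

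Second, I would apply \eqref{eq:VDiff} with $z_1=\dot X(t)$, $z_2=\nabla c^\ast(\D\phi(X(t)))$, $z_3=\nabla c^\ast(\D\phi^r(X(t)))$, followed by Hölder's inequality with exponents $p/(p-1)$ and $p$. Writing $M=\lvert\dot X\rvert+\lvert\nabla c^\ast(\D\phi)\rvert+\lvert\nabla c^\ast(\D\phi^r)\rvert$, the left-hand side is bounded by
\begin{align*}
\left(\int_{\Omega_{3/2}}\int_{\sigma_{3/2}}^{\tau_{3/2}}M^p\d t\d\pi\right)^{(p-1)/p}\left(\int_{\Omega_{3/2}}\int_{\sigma_{3/2}}^{\tau_{3/2}}\lvert\nabla c^\ast(\D\phi)-\nabla c^\ast(\D\phi^r)\rvert^p\d t\d\pi\right)^{1/p}.
\end{align*}
The first factor is $\lesssim (E(4)+D(4))^{(p-1)/p}$ by combining $\int\lvert x-y\rvert^p\d\pi\lesssim E(4)$ (via \eqref{ass:growth}) with the $L^\infty$-bounds above and $\pi(\Omega_{3/2})\lesssim 1$.

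The main obstacle is to estimate the second factor by $r^s(E(4)+D(4))$ for the exponent $s>0$ from \eqref{eq:diff}. Using \eqref{eq:c1growthDual}, I would distinguish two cases: if $p\geq 2$, then since $p'-2\leq 0$ one has $(\lvert\D\phi\rvert+\lvert\D\phi^r\rvert)^{p'-2}\leq\lvert\D\phi-\D\phi^r\rvert^{p'-2}$, and the integrand reduces to $\lvert\D\phi-\D\phi^r\rvert^{p'}$; if $1<p<2$, the $L^\infty$-bound on $\lvert\D\phi\rvert+\lvert\D\phi^r\rvert$ absorbs the prefactor, yielding a bound by $(E(4)+D(4))^{2-p}\lvert\D\phi-\D\phi^r\rvert^p$, to which one applies Hölder on $B_R$ to reduce to $\int_{B_R}\lvert\D\phi-\D\phi^r\rvert^{p'}$. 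The remaining step is to pass from the trajectory integral to a spatial integral on $B_R$: by Lemma \ref{lem:linfty}, trajectories entering $\overline B_{3/2}$ have both endpoints in $B_2\subset B_R$, and writing $\rho_t=(X(t))_\#\pi$, one argues by Fubini and the $p$-Wasserstein closeness of $\lambda,\mu$ to Lebesgue measure (captured by $D(R)\lesssim D(4)$, via Lemma \ref{lem:dataRestriction}) that $\int\int_{\sigma_{3/2}}^{\tau_{3/2}}f(X(t))\d t\d\pi\lesssim \int_{B_R}f\d x+\|f\|_{L^\infty}D(4)$ for nonnegative $f$. Combined with \eqref{eq:diff}, this yields the claimed bound on the second factor. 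Putting the pieces together, the LHS is $\lesssim r^{s/p}(E(4)+D(4))$, and choosing $r_0(\tau)$ so that $r_0^{s/p}\leq\tau$ completes the proof.
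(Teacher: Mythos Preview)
Your overall strategy—pointwise bound via \eqref{eq:VDiff}, H\"older, then \eqref{eq:diff}—is the same as the paper's, and the treatment of the first H\"older factor and the case distinction $p\geq 2$ versus $p<2$ are fine. The gap is in your passage from the trajectory integral to the spatial integral. The asserted inequality
\[
\int_{\Omega_{3/2}}\int_{\sigma_{3/2}}^{\tau_{3/2}} f(X(t))\,\d t\,\d\pi \;\lesssim\; \int_{B_R} f\,\d x \;+\; \|f\|_{L^\infty}\,D(4)
\]
is \emph{false} for general nonnegative $f\in L^\infty$: Wasserstein closeness of $\lambda,\mu$ (or of the displacement interpolant $\rho_t$) to Lebesgue measure only controls integrals against H\"older-continuous test functions, not against bounded ones. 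A simple counterexample is $\mu$ a sum of $N$ equally spaced Dirac masses on $[0,1]$ and $f$ the indicator of tiny neighbourhoods of the atoms; then $W_p(\mu,\d x)\sim N^{-1}$ while $\int f\,\d\mu=1\gg \int f\,\d x$.

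What the paper does instead is to exploit the interior $C^{0,\beta}$-regularity of $\D\phi$ and $\D\phi^r$ (available on any $B_\rho$ with $\rho<R$, since trajectories in $\Omega_{3/2}$ stay well inside $B_R$ by Lemma~\ref{lem:linfty}). Concretely, one first replaces $X(t)$ by $X(0)$ at the cost of
\[
[\,\lvert\D\phi-\D\phi^r\rvert^{p'}\,]_{C^{0,\beta}}\int_{\Omega_{3/2}}\lvert X(t)-X(0)\rvert^\beta\,\d\pi \;\lesssim\; c(r)\,(E(4)+D(4))^{1+\frac{\beta}{p+d}},
\]
and then replaces $\d\mu$ by $\kappa_{\mu,R}\,\d x$ via Lemma~\ref{lem:C2measures}, again using the H\"older seminorm of the integrand, picking up an error $c(r)(E(4)+D(4))^{1+\beta/p}$. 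Only after this does \eqref{eq:diff} apply to give the $r^s(E(4)+D(4))$ main term. Your argument can be repaired along these lines, but as written the key transition step is not justified; you must invoke the H\"older regularity of the integrand, not merely its $L^\infty$ bound.
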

\begin{proof}
Write $\xi(x) = \nabla c^\ast(\D\phi(x))$, $\xi^r(x) = \nabla c^\ast(\D\phi^r(x))$. We focus on the case $p\leq 2$. The case $p>2$ follows by similar arguments, but is easier.
In light of \eqref{eq:VDiff}, the controlled $p^\prime$-growth of $c^\ast$ \eqref{eq:c1growthDual} and using H\"older, we find
\begin{align*}
&\left\lvert\int_{\Omega_{3/2}}\int_{\sigma_{3/2}}^{\tau_{3/2}} V\left(\dot X(t),\xi(X(t))\right)\d t\d\pi-\int_\Omega \int_\sigma^\tau V\left(\dot X(t),\xi^r(X(t))\right)\d t\d\pi\right\rvert\\
\lesssim& \int_{\Omega_{3/2}}\int_{\sigma_{3/2}}^{\tau_{3/2}} \lvert \xi(X(t))-\xi^r(X(t))\rvert\left(\lvert \dot X(t)+\lvert \xi(X(t))\rvert + \lvert \xi^r(X(t))\rvert\right)^{p-1}\d t\d\pi\\
\lesssim& \int_{\Omega_{3/2}}\int_{\sigma_{3/2}}^{\tau_{3/2}} \lvert \D\phi(X(t))-\D\phi^r(X(t))\rvert \left(\lvert \D\phi(X(t))\rvert + \lvert \D\phi^r( X(t))\rvert\right)^{p^\prime-1}\d t\d\pi\\
\leq& \left(\int_{\Omega_{3/2}}\int_{\sigma_{3/2}}^{\tau_{3/2}}\lvert \D\phi(X(t))-\D\phi^r(X(t))\rvert^{p^\prime}\d t\d\pi\right)^\frac 1 {p^\prime}\\
&\quad \times\left(\int_{\Omega_{3/2}}\int_{\sigma_{3/2}}^{\tau_{3/2}}\lvert \D\phi (X(t))\rvert^{p^\prime}+\lvert \D\phi^r (X(t))\rvert^{p^\prime}\d t\d\pi\right)^\frac 1 p.
\end{align*}
Note that due to Lemma \ref{lem:linfty}, if $X\in\Omega_{3/2}$, then $X(0)\in B_{7/8}$. Thus using elliptic regularity in the form of \eqref{eq:interior} and \eqref{eq:regularityPhiR},
\begin{align*}
&\Big\lvert\int_{\Omega_{3/2}}\int_{\sigma_{3/2}}^{\tau_{3/2}}\lvert \D\phi(X(t))-\D\phi^r(X(t))\rvert^{p^\prime}\d t\d\pi-\int_{\Omega_{3/2}}\int_{\sigma_{3/2}}^{\tau_{3/2}}\lvert \D\phi(X(0))-\D\phi^r(X(0))\rvert^{p^\prime}\d t\d\pi\Big\rvert\\
\lesssim& \left([\D\phi]_{C^{0,\beta}(B_{7/8})}+[\D\phi^r]_{C^{0,\beta}(B_{7/8})}\right)\left(\sup_{B_{7/8}}\lvert \D \phi\rvert+\lvert \D \phi^r\rvert\right)^{p^\prime-1}\int_{\Omega_{3/2}}\int_{\sigma_{3/2}}^{\tau_{3/2}} \lvert X(t)-X(0)\rvert^\beta \d t\d\pi\\
\lesssim& c(r)(E(4)+D(4)) \pi(\Omega_{3/2})(E(4)+D(4))^\frac \beta {p+d}\lesssim c(r)(E(4)+D(4))^{1+\frac \beta {p+d}}.
\end{align*}
In order to obtain the last line, we used the $L^\infty$-bound Lemma \ref{lem:linfty}. Moreover, using Lemma \ref{lem:C2measures} and elliptic regularity in the form of \eqref{eq:diff},
\begin{align*}
&\int_{\Omega_{3/2}}\int_{\sigma_{3/2}}^{\tau_{3/2}} \lvert \D \phi(X(0))-\D \phi^r(X(0))\rvert^{p^\prime}\d t\d\pi\leq \int_{B_{7/8}} \lvert \D \phi(x)-\D \phi^r(x)\rvert^{p^\prime}\d\mu\\
\leq& \Big\lvert \int_{B_{7/8}} \lvert \D \phi(x)-\D \phi^r(x)\rvert^{p^\prime}(\d\mu-\kappa_{\mu,R}\d x)\Big\rvert+\kappa_{\mu,R}\int_{B_R}\lvert \D \phi(x)-\D \phi^r(x)\rvert^{p^\prime}\d x\\
\lesssim& \left(\sup_{B_{7/8}} \lvert \D \phi\rvert^{p^\prime-1}+\lvert \D\phi^r\rvert^{p^\prime-1}\right)\left([\D \phi]_{C^{0,\beta}(B_{(3/2+R)/2})}+[\D^r\phi]_{C^{0,\beta}(B_{(3/2+R)/2})}\right)\\
&\quad \times W_{c}(\mu\llcorner B_R,\kappa_{\mu,R}\d x\llcorner B_R)^\frac \beta p+ r^s(E(4)+D(4))\\
\lesssim& c(r)(E(4)+D(4))^{1+\frac \beta p} + r^s(E(4)+D(4)).
\end{align*}
Arguing similarly, that is first replacing $X(t)$ with $X(0)$, at the cost of making an error of size $c(r)\left(E(4)+D(4)\right)^{1+\frac \beta {p+d}}$, and then replacing $\d \mu$ with $\d x$, making an error $c(r)(E(4)+D(4))^{1+\frac \beta p}+r(E(4)+D(4))$, we find
\begin{align*}
&\int_{\Omega_{3/2}}\int_{\sigma_{3/2}}^{\tau_{3/2}}\lvert \nabla \phi(X(t))\rvert^{p^\prime}+\lvert \nabla \phi^r(X(t))\rvert^{p^\prime}\d t\d\pi\\
\lesssim& r^s(E(4)+D(4))+ c(r)(E(4)+D(4))^{1+\frac \beta {p+d}}.
\end{align*}
Collecting estimates and choosing $r$ as well as $\e(\tau)$ sufficiently small, the desired result follows.
\end{proof}

We now turn to estimating each of the three terms on the right-hand side of the estimate in Lemma \ref{lem:orthog} in turn. We will see that the second and third term are errors that arise from the approximation of the boundary data and from passing to the perspective of trajectories, respectively. Accordingly, estimating them will be essentially routine. In contrast, estimating the first term requires us to contrast an appropriate competitor to $\pi$.
\begin{lemma}\label{lem:firstTerm}
For every $0<\tau$ there exists $\e(\tau),C(\tau),r_0(\tau)>0$ such that if it holds that ${E(4)+D(4)\leq \e(\tau)}$ and $0<r\leq r_0$, then there exists $R\in[2,3]$ such that if $\phi^r$ solves \eqref{eq:phiMollif}, then
\begin{align*}
&\int_\Omega c(x-y)\d\pi - \int_{B_R} c(\nabla c^\ast(\D\phi^r))\d x \lesssim \tau E(4)+D(4).
\end{align*}
\end{lemma}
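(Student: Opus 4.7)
The plan is to combine the localisation result of Lemma \ref{lem:localisation} with a Benamou--Brenier competitor built from $\phi^r$. Applying Lemma \ref{lem:localisation} (for $\e(\tau)$ small enough) gives
\[
\int_\Omega c(x-y)\d\pi \leq (1+\delta)\, W_c(\lambda\llcorner B_R + f_R, \mu\llcorner B_R + g_R) + \tau(E(4)+D(4)),
\]
so the remaining task is to bound the Wasserstein term on the right by $\int_{B_R}c(\nabla c^\ast(\D\phi^r))\d x$ up to an error of order $\tau E(4)+D(4)$.

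Using the triangle inequality Lemma \ref{lem:triangleInequality} successively, I would replace $f_R, g_R$ by their approximations $\bar f_R, \bar g_R$ from Lemma \ref{lem:approximation}, then by their mollifications $\bar f_R^r, \bar g_R^r$, and finally $\lambda\llcorner B_R, \mu\llcorner B_R$ by $\kappa_{\lambda,R}\d x\llcorner B_R, \kappa_{\mu,R}\d x\llcorner B_R$. The first replacement costs $W_c(f_R,\bar f_R)+W_c(g_R,\bar g_R)\lesssim \tau E(4)+D(4)$ by Lemma \ref{lem:approximation}; the second is a routine mollification estimate on $\p B_R$ that vanishes as $r\to 0$, using the $L^p$ bound of Lemma \ref{lem:approximation}; the third is bounded by $D(R)$, which is $\lesssim D(4)$ for a suitable $R\in[2,3]$ by Lemma \ref{lem:dataRestriction}. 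Throughout, the radius $R\in[2,3]$ is chosen via pigeonhole so that the requirements of both Lemma \ref{lem:approximation} and Lemma \ref{lem:dataRestriction} hold simultaneously.

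It then remains to estimate $W_c(\kappa_{\lambda,R}\d x\llcorner B_R + \bar f_R^r, \kappa_{\mu,R}\d x\llcorner B_R + \bar g_R^r)$ by $\int_{B_R}c(\nabla c^\ast(\D\phi^r))\d x$, and for this I would use the Benamou--Brenier formulation \eqref{eq:BenamouBrenierDefinition}, directly mirroring the construction in the proof of Lemma \ref{lem:dataRestriction}. The candidate path is
\[
\rho_t = s_t\,\d x\llcorner B_R + (1-t)\bar f_R^r + t\bar g_R^r, \qquad j_t = \nabla c^\ast(\D\phi^r)\,\d x\llcorner B_R,
\]
with $s_t = \kappa_{\lambda,R}+t(\kappa_{\mu,R}-\kappa_{\lambda,R})$. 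The surface contribution of $\Div j_t$ coming from restricting $j_t$ to $B_R$ is cancelled by the time-derivative of the $\bar f_R^r, \bar g_R^r$ parts of $\rho_t$, while in the bulk the constant $c^r$ is designed exactly to balance the linear interpolation $\kappa_{\mu,R}-\kappa_{\lambda,R}$. Applying the Fenchel--Young trick of \eqref{data2} together with the energy estimate \eqref{eq:alternativeEnergy} then yields the desired bound.

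The main obstacle is the bookkeeping of the many layers of approximation, in particular verifying that the Benamou--Brenier candidate has $\rho_t$ bounded away from zero and that $c(\nabla c^\ast(\D\phi^r)/s_t)s_t$ is close to $c(\nabla c^\ast(\D\phi^r))$ when $|s_t - 1|\lesssim D(4)^{1/p}$. Because $c$ is not homogeneous, this last comparison is where the full strength of the controlled growth assumption \eqref{ass:Cgrowth} is needed; the resulting discrepancy is of order $D(4)^{1/p}(E(4)+D(4))$ and is absorbed into the $\tau E(4)+D(4)$ budget by making $\e(\tau)$ sufficiently small.
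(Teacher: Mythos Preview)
Your proposal is correct and follows essentially the same strategy as the paper: localisation via Lemma~\ref{lem:localisation}, replacement of the data by its approximations through Lemma~\ref{lem:triangleInequality}, and a Benamou--Brenier competitor built from the PDE solution with the Fenchel--Young trick and \eqref{ass:Cgrowth} to control the $|s_t-1|$ discrepancy. The only organisational difference is that the paper first passes from $\phi^r$ back to $\phi$ using the stability estimate \eqref{eq:diff} (so that the Benamou--Brenier step is run with $\phi$ and the \emph{unmollified} data $\overline f_R,\overline g_R$), whereas you keep $\phi^r$ throughout and instead insert the extra replacement $\overline f_R,\overline g_R\to\overline f_R^r,\overline g_R^r$; both routes are equivalent in effort, and your claim that $c^r$ exactly balances $\kappa_{\mu,R}-\kappa_{\lambda,R}$ is correct since convolution on $\partial B_R$ preserves mass.
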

\begin{proof}
We note, in the case $p\leq 2$, using the $p$-growth of $c$ \eqref{ass:Cgrowth}, the $p^\prime-1$-growth of $\nabla c^\ast$ \eqref{eq:c1growthDual} and H\"older,
\begin{align*}
&\int_{B_R} c(\nabla c^\ast(\D\phi^r))-c(\nabla c^\ast(\D\phi))\d x\\
\lesssim& \int_{B_R} \lvert \nabla c^\ast(\D\phi^r)-\nabla c^\ast(\D\phi)\rvert(\lvert \nabla c^\ast(\D\phi^r)\rvert+\lvert \nabla c^\ast(\D\phi)\rvert)^{p-1}\d x\\
\lesssim& \int_{B_R} \lvert \D\phi-\D\phi^r\rvert \left(\lvert D\phi\rvert+\lvert \D\phi^\prime\rvert\right)^{p^\prime-1}\\
\lesssim& \|\D\phi-\D\phi^r\|_{L^{p^\prime}(B_R)}\left(\|\D\phi\|_{L^{p^\prime}(B_R)}+\|\D\phi^r\|_{L^{p^\prime}(B_R)}\right)^{p^\prime-1}\\
\lesssim& r^\frac s {p^\prime} (E(4)+D(4)).
\end{align*}
To obtain the last line, we used the elliptic estimates \eqref{eq:energy} and \eqref{eq:diff}.  In case $p\geq 2$ a similar estimate holds by the same argument. 
Due to the localisation result of Lemma \ref{lem:localisation} and the $L^\infty$-bound in the form of Corollary \ref{cor:linfty},
\begin{align*}
\int_\Omega c(x-y))\d\pi\leq W_c(\lambda\llcorner B_R+f_R,\mu\llcorner B_R+g_R)+2\int_{\Omega \cap \{\exists t\in[0,1]\colon X(t)\in \p B_R \} }c(x-y)\d\pi\\
\leq W_c(\lambda\llcorner B_R+f_R,\mu\llcorner B_R+g_R)+c\left(\tau E(4)+D(4)\right).
\end{align*}
In particular, combining the previous two estimates and choosing $r$ sufficiently small, it suffices to prove
\begin{align*}
&W_c(\lambda \llcorner B_R +f_R,\mu\llcorner B_R+g_R)- \int_{B_R} c(\nabla c^\ast(\D\phi))\d x \lesssim \tau E(4)+D(4).
\end{align*}
Using Lemma \ref{lem:triangleInequality}, we obtain for $\delta\in(0,1)$ to be fixed
\begin{align*}
&W_c(\lambda \llcorner B_R+f_R,\mu\llcorner B_R+g_R)\\
\leq& (1+\delta) W_c(\kappa_{\lambda,R}\d x\llcorner B_R+\overline f_R,\kappa_{\mu,R}\d x\llcorner B_R+\overline g_R)\\
&\quad+ c(\delta) \left(W_c(\lambda\llcorner B_R,\kappa_{\lambda,R} \d x\llcorner B_R)+W_c(\mu\llcorner B_R,\kappa_{\mu,R} \d x\llcorner B_R)\right)\\
&\qquad +c(\delta)\left(W_c(f_R,\overline f_R)+W_c(g_R,\overline g_R)\right).
\end{align*}
Noting that due to the definition of $D$ and our choice of $R$,
\begin{align*}
&W_c(\lambda\llcorner B_R,\kappa_{\lambda,R} \d x\llcorner B_R)+W_c(\mu\llcorner B_R,\kappa_{\mu,R} \d x\llcorner B_R)+W_c(f_R,\overline f_R)+W_c(g_R,\overline g_R)\\
\lesssim& \tau E(4)+D(4),
\end{align*}
we claim that for some $C>0$,
\begin{align}\label{claim:benamou}
W_c(\kappa_{\lambda,R} \d x\llcorner B_R+\overline f_R,\kappa_{\mu,R} \d x\llcorner B_R+\overline g_R)\leq (1+CD(4)^\frac 1 p) \int_{B_R} c(\nabla c^\ast(\D\phi))\d x.
\end{align}
Collecting estimates, choosing first $\delta$ and $r$ small, then $\e$ small, once \eqref{claim:benamou} is established, the proof is complete.

Establishing \eqref{claim:benamou} is easy to do using the Benamou-Brenier formulation \eqref{eq:eulerianFormulation}. For $t\in[0,1]$ introduce the non-singular, non-negative measure
\begin{align*}
\rho_t = t(\kappa_{\mu,R} \d x \llcorner B_R+\overline f_R)+(1-t)(\kappa_{\lambda,R} \d x\llcorner B_R+\overline g_R),
\end{align*}
and the vector-valued measure
\begin{align*}
j_t = \nabla c^\ast(\D\phi)\d x\llcorner B_R.
\end{align*}
Note that \eqref{eq:phiMollif} can be rewritten as
\begin{align*}
\frac{\d}{\d t} \int \zeta \d\rho_t = \int \nabla \zeta \cdot \d j_t
\end{align*}
for all test functions $\zeta$, that is $(j_t,\rho_t)$ satisfy the continuity equation in the sense of distributions.
The Benamou-Brenier formula \eqref{eq:eulerianFormulation} gives
\begin{align*}
W_c(\kappa_{\lambda,R} \d x \llcorner B_R+\overline f_R,\kappa_{\mu,R}\d x\llcorner B_R+\overline g_R)\leq \int_0^1 \int c\left(\frac{\d j_t}{\d \rho_t}\right)\d \rho_t \d t.
\end{align*}
Here the right-hand side needs to be interpreted in the sense of \eqref{eq:BenamouBrenierDefinition}. Since $j_t$ is supported in $B_R$ it suffices to consider $\zeta$ supported in $B_R$. Then by definition of $(j_t,\rho_t)$ and the Fenchel-Young inequality for any $s>0$,
\begin{align*}
\int \zeta \d j_t-\int c^\ast(\zeta)\d\rho_t =& \int_{B_R}\zeta \cdot \nabla c^\ast(\D\phi)- c^\ast(\zeta) (t\kappa_{\mu,R}+(1-t)\kappa_{\lambda,R})\d x\\
\leq& \int_{B_R}s c^\ast(\zeta)+s c\left(\frac 1 s \nabla c^\ast(\D\phi)\right)-c^\ast(\zeta)(t\kappa_{\mu,R}+(1-t)\kappa_{\lambda,R})\d x.
\end{align*}
Choosing $s = t\kappa_{\mu,R}+(1-t)\kappa_{\lambda,R}$ and integrating in $t$, we deduce
\begin{align*}
W_c(\kappa_{\mu,R}\d x\llcorner B_R+\overline f_R,\kappa_{\lambda,R} \d x\llcorner B_R+\overline g_R)\leq \int_{B_R}\int_0^1 s c\left(\frac{\nabla c^\ast(\D\phi))}{s}\right)\d t\d x.
\end{align*}
Now
\begin{align*}
\int_{B_R}\int_0^1 s c\left(\frac{\nabla c^\ast(\D\phi))}{s}\right)\d t\d x\leq& \int_{B_R} c(\nabla c^\ast(\D\phi))\d x + \int_{B_R}\int_0^1 (s-1) c(\nabla c^\ast(\D\phi))\ d x\d t\\
&\quad + \int_{B_R}\int_0^1 s \left(c\left(\frac{\nabla c^\ast(\D\phi)} s\right)-c(\nabla c^\ast(\D\phi))\right)\d t \d x.
\end{align*}
Note that 
\begin{align*}
\lvert s-1\rvert \lesssim D(4)^\frac 1 p.
\end{align*}
Further using \eqref{ass:Cgrowth} and \eqref{ass:growth},
\begin{align*}
&\int_{B_R}\int_0^1 s \left(c\left(\frac{\nabla c^\ast(\D\phi)} s\right)-c(\nabla c^\ast(\D\phi))\right)\d t \d x\\
\lesssim& \int_{B_R}\int_0^1 \left\lvert 1- \frac 1 s\right\rvert \left(1+\frac 1 s\right)^{p-1} \lvert \nabla c^\ast(\D\phi)\rvert^{p} \d t\d x
\lesssim D(4)^\frac 1 p \int_{B_R} c(\nabla c^\ast(\D\phi))\d x.
\end{align*}
Thus the proof of \eqref{claim:benamou} is complete.
\end{proof}

We turn to the second term on the right-hand side of Lemma \ref{lem:orthog}. This term will be small due to the definition of $\phi^r$.
\begin{lemma}\label{lem:secondTerm}
For every $0<\tau$ there exists $\e(\tau),C(\tau),r_0(\tau)>0$ such that if it holds that ${E(4)+D(4)\leq \e(\tau)}$ and $0<r\leq r_0$, then there exists $R\in[2,3]$ such that if $\phi^r$ solves \eqref{eq:phiMollif}, then
\begin{align*}
&\int_\Omega \int_{\sigma}^\tau \langle \dot X(t)-\nabla c^\ast(\D\phi^r(X(t)),\D\phi^r(X(t))\rangle\d t\d\pi \lesssim \tau E(4)+D(4).
\end{align*}
\end{lemma}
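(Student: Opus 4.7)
The plan is to exploit the PDE \eqref{eq:phiMollif} satisfied by $\phi^r$. Write $v = \nabla c^\ast(\D\phi^r)$ and split
\begin{align*}
\int_\Omega\int_\sigma^\tau\langle \dot X(t) - v(X(t)),\D\phi^r(X(t))\rangle\d t\d\pi = I_1 - I_2,
\end{align*}
where $I_1$ and $I_2$ denote the integrals of $\langle \dot X(t),\D\phi^r(X(t))\rangle$ and $\langle v(X(t)),\D\phi^r(X(t))\rangle$ against $\d t\d\pi$, respectively. The strategy is to show that each of $I_1$ and $I_2$ is close to $\int_{B_R} v\cdot \D\phi^r\d x$ up to an error $\tau E(4) + C(\tau) D(4)$, so that the difference $I_1 - I_2$ enjoys the claimed bound.

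The analysis of $I_1$ is essentially algebraic. The chain rule $\frac{\d}{\d t}\phi^r(X(t)) = \langle \dot X(t),\D\phi^r(X(t))\rangle$ telescopes the $t$-integral, and the case analysis $X(\sigma)\in \{x\}\cup \p B_R$, $X(\tau)\in \{y\}\cup \p B_R$ together with the definitions \eqref{eq:2} of $f_R,g_R$ yields
\begin{align*}
I_1 = \int_\Omega (\phi^r(X(\tau)) - \phi^r(X(\sigma)))\d\pi = \int_{B_R}\phi^r\d(\mu-\lambda) + \int_{\p B_R}\phi^r (g_R - f_R).
\end{align*}
On the other hand, testing the weak formulation of \eqref{eq:phiMollif} with $\phi^r$ and using the normalisation $\int_{B_R}\phi^r\d x = 0$ (which kills the constant $c^r$ contribution) gives $\int_{B_R} v\cdot \D\phi^r\d x = \int_{\p B_R}(\overline g_R^r - \overline f_R^r)\phi^r$. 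Consequently,
\begin{align*}
I_1 - \int_{B_R} v\cdot \D\phi^r\d x = \int_{B_R}\phi^r\d(\mu-\lambda) + \int_{\p B_R}\phi^r\bigl[(g_R - f_R) - (\overline g_R^r - \overline f_R^r)\bigr].
\end{align*}
The bulk term is controlled via Lemma \ref{lem:C2measures} applied to $\mu-\kappa_{\mu,R}\d x$ and $\lambda-\kappa_{\lambda,R}\d x$ (the residual constant $(\kappa_{\mu,R}-\kappa_{\lambda,R})\int\phi^r\d x$ vanishes by the normalisation), Lemma \ref{lem:dataRestriction}, the $C^1$-bound \eqref{eq:regularityPhiR} for $\phi^r$, and Young's inequality. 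The boundary term is handled analogously using a Kantorovich duality estimate with $W_c(g_R,\overline g_R^r)\leq W_c(g_R,\overline g_R) + W_c(\overline g_R,\overline g_R^r)$, combining Lemma \ref{lem:approximation} with standard mollification estimates.

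For $I_2$, the Fenchel identity $v\cdot \D\phi^r = c^\ast(\D\phi^r) + c(v)$ combined with Lemma \ref{lem:ellipticRegularity}(v) yields a $C^{0,\beta}$-bound on the integrand. Replacing $X(t)$ by $X(0)=x$ along trajectories produces an error of at most $[v\cdot\D\phi^r]_{C^{0,\beta}}\int_\Omega |x-y|^\beta\d\pi$, which by H\"older's inequality is bounded by $c(r)(E(4)+D(4))E(4)^{\beta/p}$. Splitting $\Omega$ according to whether both endpoints lie in $\overline B_R$ (so $\tau-\sigma=1$) or not, the boundary-crossing contribution is controlled via Corollary \ref{cor:linfty} after a suitable choice of $R\in[2,3]$, reducing the bulk to $\int_{B_R}v\cdot \D\phi^r\d\lambda$; this is in turn close to $\int_{B_R}v\cdot \D\phi^r\d x$ by another application of Lemma \ref{lem:C2measures} together with the bound $\lvert \kappa_{\lambda,R}-1\rvert \lesssim D(4)^{1/p}$. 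The main obstacle is the bookkeeping balancing act: the H\"older constants of $\D\phi^r$ blow up as $r\downarrow 0$ with explicit rates from \eqref{eq:regularityPhiR}, while the mollification errors $W_c(\overline g_R,\overline g_R^r)$ shrink. One fixes $r=r(\tau)$ so that the H\"older factors are manageable, then takes $\e(\tau)$ small so that every term of the form $c(r)(E+D)^{1+\alpha}$ with $\alpha>0$ is absorbed into $\tau E(4) + C(\tau) D(4)$ via Young's inequality.
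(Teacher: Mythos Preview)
Your proposal is correct and follows essentially the same route as the paper: splitting into the $\dot X$-term (telescoped via the chain rule into bulk and boundary pieces tested against $\phi^r$) and the $\nabla c^\ast(\D\phi^r)\cdot\D\phi^r$-term (reduced to $\int_{B_R}$ by replacing $X(t)\to X(0)$, handling boundary-crossing trajectories via Corollary~\ref{cor:linfty}, and swapping $\d\lambda$ for $\d x$ via Lemma~\ref{lem:C2measures}). Your use of the normalisation $\int_{B_R}\phi^r=0$ to kill the constant contributions directly is in fact slightly cleaner than the paper's treatment, which carries along $c^r$ and $\kappa_{\mu,R}-\kappa_{\lambda,R}$ and estimates their difference.
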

\begin{proof}
Note that $\frac{\d}{\d t} \phi^r(X(t)) = \langle \dot X(t),\nabla \phi^r(X(t))\rangle$. Thus, since $\pi\in \Pi(\lambda,\mu)$ as well as using the definition of $f_R$ and $g_R$,
\begin{align*}
\int_\Omega \int_\sigma^\tau \langle \dot X(t),\D\phi^r(X(t))\rangle\d t\d\pi =& \int_\Omega \phi^r(X(\tau)-\phi^r(X(\sigma))\d\pi\\
=& \int_{\Omega\cap\{X(\tau)\in \p B_R\}} \phi^r(X(\tau))\d \pi+\int_{\{x\in B_R\}} \phi^r(x)\d \pi\\
&\quad-\int_{\Omega \cap\{X(\sigma)\in \p B_R\}}\phi^r(X(\sigma))\d\pi - \int_{\{y\in B_R\}}\phi^r(y)\d\pi\\
=& \int_{B_R}\phi^r \d(\mu-\lambda) + \int_{\p B_R} \phi^r \d(g_R-f_R).
\end{align*}
On the other hand, as in Lemma \ref{lem:lefthandSide}, at cost of an error $c(r)(E(4)+D(4))^{1+\frac \beta {p+d}}$, we may replace $\D\phi^r(X(t))$ with $\D\phi^r(x)$ in the expression
\begin{align*}
\int_\Omega \int_\sigma^\tau \langle \nabla c^\ast(\D\phi^r(X(t)),\D\phi^r(X(t))\rangle \d t\d\pi
\end{align*}
and $\int_\Omega \int_\sigma^\tau\d t\d\pi$ with $\int_{B_R}\d x$ at the cost of a further error $c(r)(E(4)+D(4))^{1+\frac \beta p}$. Thus it suffices to consider
\begin{align*}
\int_{B_R} \langle \nabla c^\ast(\D\phi),\D\phi\rangle \d x = c^r\int_{B_R} \phi^r \d x+\int_{\p B_R} \phi^r\d\left(\overline g^r_R-\overline f^r_R\right).
\end{align*}
Collecting estimates, we have shown
\begin{align*}
&\int_\Omega \int_{\sigma}^\tau \langle \dot X(t)-\nabla c^\ast(\D\phi^r(X(t)),\D\phi^r(X(t))\rangle\d t\d\pi \\
\lesssim& \int_{B_R} \phi^r \d(\mu-\lambda-c^r)+\int_{\p B_R} \phi^r \d(g_R-\overline g_R^r+f_R-\overline f_R^r)+c(r) \left(E(4)+D(4)\right)^{1+\frac \beta {p+d}} \\
=& I + II + III.
\end{align*}
We find using Lemma \ref{lem:C2measures}, Young's inequality and elliptic regularity in the form of \eqref{eq:regularityPhiR} as well as \eqref{eq:energy},
\begin{align*}
I\leq& \lvert \int_{B_R} \phi^r \d(\mu-\kappa_{\mu,R}-\lambda+\kappa_{\lambda,R})\rvert + \lvert \int_{B_R} \phi^r \d(\kappa_{\mu,R}-\kappa_{\lambda,R}-c^r)\rvert\\
\lesssim& \sup_{B_R}\lvert \D\phi^r\rvert \left(W_{c}(\lambda\llcorner B_R,\lambda_{\mu,R}\d x\llcorner B_R)^\frac 1 p+W_{c}(\mu\llcorner B_R,\kappa_{\mu,R}\d x\llcorner B_R)^\frac 1 p\right)\\
&\quad+\rvert \kappa_{\mu,R}-\kappa_{\lambda}-c^r\rvert \|\phi^r\|_{\LL^p(B_R)} \\
\lesssim& \tau E(4)+ c(\tau,r) D(4)+c(\tau)\lvert \kappa_{\mu,R}-\kappa_{\lambda,R}-c^r\rvert^{p^\prime}.
\end{align*}
Noting that as $r\to 0$, 
$$c^r=\lvert B_R\rvert^{-1} \left(g^r(\p B_R)-f^r(\p B_R)\right)\to \lvert B_R\rvert^{-1}\left(g(\p B_r)-f(\p B_R)\right) = \kappa_{\mu,R}-\kappa_{\lambda,R},
$$
 we deduce, choosing $r_0$ sufficiently small, that
\begin{align*}
I =\int_{B_R} \phi^r \d(\mu-\lambda-c^r)\lesssim \tau E(4)+D(4).
\end{align*}
Finally consider $II$. By symmetry it suffices to consider the terms involving $g$. We estimate, denoting by $(\phi^r)^r$ convolution of $\phi^r$ with the convolution kernel used to construct $g_R^r$,
\begin{align*}
\int_{\p B_R} \phi^r \d(g_R-\overline g_R^r) = \int_{\p B_R} (\phi^r)^r-\phi^r\d\overline g_R+\int_{\p B_R}\phi^r \d(\overline g_R-g_R).
\end{align*}
Now note that a standard mollification argument shows
\begin{align*}
\int_{\p B_R}\phi^r \d(g_R-\overline g_R^r)\lesssim r^\frac 1 p\|\D\phi^r\|_{\LL^{p^\prime}(B_R)}\|\overline g_R\|_{\LL^p(\p B_R)}\lesssim r^\frac 1 p (E(4)+D(4)),
\end{align*}
and that moreover using Lemma \ref{lem:C2measures} and Young's inequality,
\begin{align*}
\int_{\p B_R} \phi^r(\d\overline g_R-g)\lesssim [\D_{\mathrm{tan}}\phi^r]_{C^{0,\beta}(\p B_R)} W{c}(\overline g_R,\overline g)^\frac 1 p\lesssim c(r)(\tau E(4)+D(4)).
\end{align*}
Thus, collecting estimates and first choosing $r_0$ sufficiently small, then $\e$ small, we conclude the proof.
\end{proof}

We next estimate the third term on the right-hand side of the estimate in Lemma \ref{lem:orthog}.
\begin{lemma}\label{lem:thirdTerm}
For every $0<\tau$ there exists $\e(\tau),C(\tau),r_0(\tau)>0$ such that if it holds that ${E(4)+D(4)\leq \e(\tau)}$ and $0<r\leq r_0$, then there exists $R\in[2,3]$ such that if $\phi^r$ solves \eqref{eq:phiMollif}, then
\begin{align*}
&\int_{B_R}c(\nabla c^\ast(\D\phi^r))\d x-\int_\Omega \int_{\sigma}^\tau c(\nabla c^\ast(\D\phi^r(X(t)))\d t\d\pi \lesssim \tau E(4)+D(4).
\end{align*}
\end{lemma}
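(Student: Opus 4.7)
Set $h(z):=c(\nabla c^\ast(\D\phi^r(z)))$. Lemma \ref{lem:ellipticRegularity}(v) combined with the elliptic regularity \eqref{eq:regularityPhiR} and the $L^p$-bound $\int_{\p B_R}(\overline f_R^p+\overline g_R^p)\lesssim E(4)+D(4)$ from Lemma \ref{lem:approximation} give, for any fixed $\beta\in(0,1)$,
\begin{align*}
\|h\|_{L^\infty(\overline{B_R})}+[h]_{C^{0,\beta}(\overline{B_R})}\lesssim c(r)(E(4)+D(4)).
\end{align*}
The plan is to pass from $\int_\Omega\int_\sigma^\tau h(X(t))\,\d t\,\d\pi$ to $\int_{B_R}h\,\d x$ by four successive approximations, each producing an error that is either directly $\lesssim \tau E(4)+D(4)$, or of the form $c(r)(E(4)+D(4))^{1+\alpha}$ for some $\alpha>0$; the latter are absorbed into $\tau E(4)+D(4)$ by fixing $r=r(\tau)$ first and then choosing $\varepsilon(r,\tau)$ small enough that $c(r)\varepsilon^{\alpha}\leq \tau$.

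\textbf{Step 1.} Since $X(t),X(\sigma)\in\overline{B_R}$ for $t\in[\sigma,\tau]$, the Hölder regularity of $h$ together with Lemma \ref{lem:linfty} (giving $|X(t)-X(\sigma)|\leq|x-y|\lesssim(E(4)+D(4))^{1/(p+d)}$) yields
\begin{align*}
\left|\int_\Omega\int_\sigma^\tau h(X(t))\,\d t\,\d\pi-\int_\Omega(\tau-\sigma)h(X(\sigma))\,\d\pi\right|\lesssim c(r)(E(4)+D(4))^{1+\beta/(p+d)}.
\end{align*}
\textbf{Step 2.} Choose $R\in[2,3]$ via Markov's inequality applied to Corollary \ref{cor:linfty}, simultaneously with the conditions from Lemmas \ref{lem:approximation} and \ref{lem:dataRestriction}, so that $\pi(\Omega\cap\{\exists t:X(t)\in\p B_R\})\lesssim(E(4)+D(4))^{1/(p+d)}$. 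For $(x,y)\in B_R\times B_R$ one has $\sigma=0$, $\tau=1$, $X(\sigma)=x$; every other trajectory in $\Omega$ crosses $\p B_R$, contributing at most $\sup|h|\cdot(E+D)^{1/(p+d)}\lesssim c(r)(E+D)^{1+1/(p+d)}$. Thus $\int_\Omega(\tau-\sigma)h(X(\sigma))\,\d\pi=\int_{B_R\times B_R}h(x)\,\d\pi$ up to this error.

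\textbf{Step 3.} Writing $\int_{B_R\times B_R}h(x)\,\d\pi=\int_{B_R}h\,\d\lambda-\int_{\{x\in B_R,\,y\notin B_R\}}h(x)\,\d\pi$ and bounding the subtracted piece by the same crossing estimate reduces the problem to $\int_{B_R}h\,\d\lambda$.

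\textbf{Step 4.} Lemma \ref{lem:C2measures} and the bound $D(R)\lesssim D(4)$ from Lemma \ref{lem:dataRestriction} give
\begin{align*}
\left|\int_{B_R}h\,\d\lambda-\kappa_{\lambda,R}\int_{B_R}h\,\d x\right|\lesssim[h]_{C^{0,\beta}}D(R)^{\beta/p}\lesssim c(r)(E+D)D(4)^{\beta/p},
\end{align*}
while $|\kappa_{\lambda,R}-1|\lesssim D(4)^{1/p}$ (by the definition of $D(R)$) lets us drop the prefactor $\kappa_{\lambda,R}$ at the cost of a further $c(r)(E+D)D(4)^{1/p}$. Assembling the four steps and choosing $r$, then $\varepsilon$, as described, absorbs every error into $\tau E(4)+D(4)$.

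The main obstacle, as in Lemmas \ref{lem:firstTerm} and \ref{lem:secondTerm}, is the trade-off between the $r$-dependent blow-up of the Hölder norms of $\D\phi^r$ and the smallness of $E(4)+D(4)$: one fixes $r$ depending on $\tau$ and then shrinks $\varepsilon$ depending on $r,\tau$. A secondary technicality is that $R\in[2,3]$ must simultaneously satisfy averaging bounds from three different lemmas, but since each is a $\int_2^3(\,\cdot\,)\,\d R$ estimate, pigeonhole produces a common admissible $R$.
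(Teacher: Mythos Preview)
Your proof is correct and follows essentially the same strategy as the paper's. Both arguments set $h=c(\nabla c^\ast(\D\phi^r))$, use its global $C^{0,\beta}(\overline{B_R})$ regularity from \eqref{eq:regularityPhiR} together with Lemma~\ref{lem:ellipticRegularity}(v), and pass between the trajectory integral and $\int_{B_R}h\,\d x$ via (i) freezing the argument of $h$ along each trajectory, (ii) discarding boundary-crossing trajectories through Corollary~\ref{cor:linfty}, and (iii) replacing the marginal measure by Lebesgue via Lemma~\ref{lem:C2measures} and the bound $|\kappa-1|\lesssim D^{1/p}$. The only cosmetic differences are that the paper runs the chain in the opposite direction (splitting as $I+II+III$ starting from $\int_{B_R}h\,\d x$), freezes at $X(0)$ rather than $X(\sigma)$, and uses the $\mu$-marginal rather than $\lambda$; none of these affects the argument. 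One small inefficiency: in your Step~4 the term $(\kappa_{\lambda,R}-1)\int_{B_R}h\,\d x$ does not actually carry a $c(r)$ factor, since $\int_{B_R}h\,\d x$ is controlled directly by the energy estimate \eqref{eq:alternativeEnergy} (with $r$-independent constant), but your over-estimate is harmless.
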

\begin{proof}
Set $\xi = c(\nabla c^\ast(\D\phi^r))$. Then
\begin{align*}
&\int_{B_R} c(\nabla c^\ast(\D\phi^r))\d x-\int_\Omega \int_\sigma^\tau c(\nabla c^\ast(\D\phi^r(X(t))\d t\d\pi\\
=&(1-\kappa_{\mu,R})\int_{B_R}\xi\d x+\left(\kappa_{\mu,R} \int_{B_R}\xi\d x-\int_{B_R\times \R^d} \xi \d\pi\right)\\
&\quad+\int_{B_R\times \R^d}\xi\d\pi-\int_\Omega\int_\sigma^\tau \xi(X(t))\d t\d\pi\\
=& I + II + III.
\end{align*}
Using \eqref{eq:alternativeEnergy} and Young's inequality, we find
\begin{align*}
I \leq D(4)^\frac 1 p (E(4)+D(4))\lesssim \tau E(4) + D(4).
\end{align*}
Employing Lemma \ref{lem:C2measures} we deduce
\begin{align*}
II\lesssim \|\xi\|_{C^{0,\beta}(\overline B_R)}W_{c}(\mu\llcorner B_R,\kappa_{\mu,R}\llcorner B_R)^\frac \beta p.
\end{align*}
It is straightforward to check that $\|\xi\|_{C^{0,\beta}(\overline B_R)}\lesssim \|\D\phi^r\|_{C^{0,\beta}(\overline B_R)}\left(\sup_{\overline B_R}\D\phi^r\right)^{p^\prime-1}$, so that using \eqref{eq:regularityPhiR} and Young's inequality,
\begin{align*}
II\lesssim c(r)(E(4)+D(4))D(4)^\frac \beta p\lesssim \tau E(4)+D(4).
\end{align*}
In order to estimate $III$, we first find
\begin{align*}
&I(X(0)\in B_R)\xi(X(0))-I(X(t)\in B_R)\xi(X(t))\\
\leq& I(\exists s\in [0,1]\colon X(s)\in \p B_R, X(0)\in \overline B_R)\xi(X(0))\\
&\quad+ I(\forall s\in[0,1]\; X(s)\in B_R)\left(\xi(X(0))-\xi(X(t)\right).
\end{align*}
Thus, using also \eqref{eq:regularityPhiR} and Jensen's inequality,
\begin{align*}
III\leq&\int_0^1\int I(X(0)\in B_R)\xi(X(0))-I(X(t)\in B_R)\xi(X(t))\d \pi\d t\\
\leq& \sup_{\overline B_R}\lvert \xi\rvert \int_0^1\int I(\exists s\in[0,1]\colon X(s)\in \p B_R, X(0)\in \p B_R)\d\pi\d t\\
&\quad + \|D\xi\|_{C^{0,\beta}(\overline B_R)}\int_0^1 \int I(\forall s\in[0,1]\; X(s)\in B_R)\lvert X(t)-X(0)\rvert^\beta\d t\d\pi\\
\lesssim& c(r)(E(4)+D(4))\pi(\exists t\in[0,1]\colon X(t)\in \p B_R)+c(r)(E+D)\int_\Omega \lvert x-y\rvert^\beta \d\pi\\
\lesssim& c(r) (E(4)+D(4))^{1+\frac 1 {p+d}}+c(r)(E(4)+D(4))^{1+\frac \beta p}.
\end{align*}
In order to obtain the last line we used Corollary \ref{cor:linfty}. Collecting estimates and choosing first $r_0$ small, then $\e$ small, we conclude the proof.
\end{proof}

\subsection{Proof of Theorem \ref{thm:mainBody}}\label{sec:conclusion}
We are now ready to prove Theorem \ref{thm:mainBody}.
\begin{proof}[Proof of Theorem \ref{thm:mainBody}]
Applying Lemma \ref{lem:orthog} to $\phi^r$ and collecting the output of Lemma \ref{lem:lefthandSide}, Lemma \ref{lem:firstTerm}, Lemma \ref{lem:secondTerm} and Lemma \ref{lem:thirdTerm}, we have shown that for any $0<\tau$, there is $\e,C>0$ such that if $E(4)+D(4)\leq \e$, then
\begin{align*}
\int_{\Omega_{3/2}} \int_{\sigma_{3/2}}^{\tau_{3/2}}V\left(\dot X(t),\nabla c^\ast(\D \phi(X(t))\right)\d t\d\pi \leq \tau E(4)+C D(4).
\end{align*}
Arguing as in Lemma \ref{lem:lefthandSide}, we may replace $\D(\phi(X(t)))$ by $\D(\phi(X(0)))$ at the cost of an error of size $\left(E(4)+D(4)\right)^{1+\frac \beta {p+d}}$. Noting that due to Lemma \ref{lem:linfty}, $\#_1\subset \Omega_{3/2}$, we find
\begin{align*}
\int_{\#_1}V\left(x-y-\nabla c^\ast(\D\phi(x))\right)\d \pi\leq \int_{\Omega_{3/2}}\int_0^1 V\left(\dot X(t)-\nabla c^\ast(\D\phi(X(0))\right)\d t\d\pi.
\end{align*}
Now using \eqref{eq:CgrowthDual}, elliptic regularity \eqref{eq:interior}, as well as the $L^\infty$-bound in the form of Corollary \ref{cor:linfty},
\begin{align*}
&\int_{\Omega_{3/2}\cap (\{\sigma_{3/2}>0\} \cup \{\tau_{3/2}<1\})}\int_{\sigma_{3/2}}^{\tau_{3/2}} V\left(\dot X(t)-\nabla c^\ast(\D\phi(X(0))\right)\d t\d\pi \\
\lesssim& \int_{\Omega_{3/2}\cap (\{\sigma_{3/2}>0\} \cup \{\tau_{3/2}<1\})} \lvert \dot X(t)\rvert^p+\lvert \D\phi(X(0)))\rvert^{p^\prime}\\
\lesssim& (E(4)+D(4)) \pi(\Omega_{3/2}\cap (\{\sigma_{3/2}>0\} \cup \{\tau_{3/2}<1\}))\\
\lesssim& (E(4)+D(4))^{1+\frac 1 {p+d}}.
\end{align*}
Thus, we conclude
\begin{align*}
\int_{\#_1}V(x-y-\nabla c^\ast(\D\phi(x)))\d \pi\lesssim \tau E(4)+D(4).
\end{align*}
This concludes the proof in the case $p\geq 2$. In the case $p\leq 2$, an application of H\"older's inequality combined with \eqref{ass:growth} concludes the proof.
\end{proof}
\bibliographystyle{acm}
\bibliography{OptimalTransport2}
\end{document}